\newcommand{\R}{\mathbb{R}}
\newcommand{\E}{\mathbb{E}}
\newcommand{\N}{\mathbb{N}}
\newcommand{\1}{{\sf \hspace*{0.9ex}}\rule{0.15ex}{1.6ex}\hspace*{-1ex} 1}
\DeclareMathOperator{\Arginf}{Arginf}
\DeclareMathOperator{\sign}{sign}
\DeclareMathOperator{\X}{\mathcal{X}}
\theoremstyle{plain}
\newtheorem{Lemme}{Lemma}[section]
\newtheorem{proposition}{Proposition}[section]
 \newtheorem{remarque}{Remark}[section]
 \newtheorem{theoreme}{Theorem}[section]
 \newtheorem{exemple}{Example}[section]
 \newtheorem{corollaire}{Corollary}[section]
 \newtheorem{question}{Problem}
\begin{document}

\begin{frontmatter}
\title{High dimensional gaussian classification\protect}
\runtitle{High dimensional gaussian classification}

\begin{aug}
\author{\fnms{Robin} \snm{Girard}\thanksref{t1}\ead[label=e1]{Robin.Girard@imag.fr}}

\address{LJK, Grenoble, France\\
}

\runauthor{R. Girard}
\affiliation{Universit\'e Joseph Fourier}
\end{aug}
\begin{abstract} 
  High dimensional data analysis is known to be as a challenging problem (see \cite{Donoho:2000yx}). In this article, we give a theoretical analysis of high dimensional classification of Gaussian data which relies on a geometrical analysis of the error measure. It links a problem of classification with a problem of nonparametric regression. We give an algorithm designed for high dimensional data which appears straightforward in the light of our theoretical work, together with the thresholding estimation theory. We finally attempt to give a general treatment of the problem that can be extended to frameworks other than gaussian.
\end{abstract}

\begin{keyword}[class=AMS]
\kwd[Primary ]{62C20}
\end{keyword}

\begin{keyword}
\kwd{Classification}
\kwd{High dimension}
\kwd{Gaussian measure}
\kwd{thresholding estimator}
\kwd{dimension reduction}
\kwd{Linear Discriminant Analysis}
\kwd{Quadratic Discriminant Analysis}
\end{keyword}

\tableofcontents
\end{frontmatter}

\section{Introduction}

Let $\X$ be a vector space, typically $\X=\R^p$ but $\X$ can also be an infinite dimensional polish space (i.e: separable complete metric space). In Section \ref{proofth2} $\X$ is a separable Banach space. In the binary classification problem, the aim is to recover the unknown class $y\in \{0,1\}$ associated with an observation $x\in X$. In other words, we seek a classification rule (also called classifier), i.e a measurable $g:\X \rightarrow \{0,1\}$. This rule gives an incorrect classification for the observation $x$ if $g(x)\neq y$. The underlying probabilistic model, that makes a performance measure of $g$ possible, is set by distributions $P_k$ ($k=0,1$) on $\X$. For $k=0,1$, the distribution $P_k$ is the distribution of the data having label equal to $k$. In this framework, the weighted sum of the probabilities of misclassification is defined by 
\begin{equation}\label{intro}
\mathcal{C}(\pi,g)=\pi P_1(g(X)\neq 1)+(1-\pi)P_0(g(X)\neq 0).
\end{equation}
\indent In a bayesian framework, the weight $\pi$ reflects the marginal distribution of the label $Y$. In our approach, we do not want this marginal distribution to set the importance of the different errors. In the many applications we have in mind, such as tumour detection from an MRI signal, the class that appears most frequently is not necessarily the one for which a classification error has the most important medical consequences. This is the reason why we search a procedure $g$ that minimise $\mathcal{C}(\pi,g)$ and not its bayesian counterpart : $P(g(X)=Y)$.\\
\indent Here, we do not want to study the influence of the weight $\pi$ in the problem. The main reason is that our results, to be given later, are simpler to formulate and to understand when $\pi=1/2$, and that the problem we are interested in is  the problem that rise from the high dimension of the space $\X$, and not the problem related to the use of $\pi$. Therefore, in the rest of the present paper we will make the assumption that $\pi=1/2$. In the sequel, we will set $\mathcal{C}(g)=\mathcal{C}(1/2,g)$. This is a usual assumption (see for example Bickel and Levina \cite{bickel:2004fk})\\

\indent In the case where $\pi=1/2$ it is known that, if $P_0$ and $P_1$ are equivalent, then the rule that minimises $\mathcal{C}(g)$ is given by 
\begin{equation}\label{optimal}
g^*(x)=\1_{V}, \;\;\; V=\{x\in \X\;:\; \mathcal{L}_{10}(x)\geq 0\} \;\; \text{ where }\mathcal{L}_{10}=\log\left (\frac{dP_1}{dP_0}\right )
\end{equation}
is the logarithm of the likekihood ratio between $P_1$ and $P_0$ (i.e the Radon-Nikodym derivative).\\

 In real life problems, $\mathcal{L}_{10}$ is unknown, and the only thing we have is a substitute $\widehat{\mathcal{L}}_{10}$ of it. Also, it is natural to plug it in (\ref{optimal}) and to use the classifier 
\[g(x)=\1_{\hat{V}}(x)\text{ and }\hat{V}=\left \{x\in \X\;:\; \widehat{\mathcal{L}}_{10}\geq 0  \right \}. \]
The natural question that we will investigate in this article is the following: 
\begin{question}\label{Pb1}
Is there a simple way to relate the excess risk $\mathcal{C}(g)-\mathcal{C}(g^*)$ to a measure of the log-likelihood "perturbation": $\widehat{\mathcal{L}}_{10}-\mathcal{L}_{10}$.
\end{question}
In other words we seek an upper bound and a lower bound of $\mathcal{C}(g)-\mathcal{C}(g^*)$ by a simple-to-study real valued function of $\widehat{\mathcal{L}}_{10}-\mathcal{L}_{10}$.
In this article we focus on the gaussian case, and unless the contrary is explicitly stated, $P_1$ and $P_0$ will be gaussian equivalent probabilities on $\X$. We investigate Problem \ref{Pb1} and the answer we obtain in the general case leads to the bound 
\[\mathcal{C}(g)-\mathcal{C}(g^*)\leq c(r)\|\widehat{\mathcal{L}}_{10}-\mathcal{L}_{10}\|^{1/6}_{L_2(\gamma)}\]
while $\|\mathcal{L}_{10}\|_{L_2(\gamma)}\geq r>0$ for a gaussian measure $\gamma$, where $c(r)$ is a constant only depending on $r$. In some particular cases (when $\widehat{\mathcal{L}}_{10}-\mathcal{L}_{10}$ and $\mathcal{L}_{10}$ are affine) we are able to give an explicit constant $c(\mathcal{L}_{10})$ and an exponent higher than $1/6$ (exponent $1$).\\
\indent  If we suppose that $P_0$ and $P_1$ have equal covariance, then it is known that $\mathcal{L}_{10}$ is affine and it is  natural to take an affine $\widehat{\mathcal{L}}_{10}$. The corresponding procedure is usually called Linear Discriminant Analysis (LDA) (even if the underlying procedure is affine). If we suppose that $P_0$ and $P_1$ have different covariance, then $\mathcal{L}_{10}$ is quadratic and it is natural to take a quadratic $\widehat{\mathcal{L}}_{10}$. The corresponding classification procedure will be called Quadratic Discriminant Analysis (QDA).\\
 \indent The corresponding procedures are also known as plug-in procedures: $\widehat{\mathcal{L}}_{10}$ is plugged into (\ref{optimal}) in order to obtain $g$. Plug-in procedure have been studied in a different context (see for example \cite{Audibert:2006fk} and the references therein), but our approach differs from those. \\
 
 The interest of Problem \ref{Pb1} in the gaussian setting, is understood by addressing the problem of finding a good substitute $\widehat{\mathcal{L}}_{10}$ for $\mathcal{L}_{10}$. For example, in many applications, we are given a learning set consisting of $n$ random variables drawn independently from $P_1$ and $n'$ drawn from $P_0$. The problem of finding a good substitute $\widehat{\mathcal{L}}_{10}$ of $\mathcal{L}_{10}$ then becomes an estimation problem whose error measure is given in the answer to Problem \ref{Pb1}. Also, our answer to Problem 1 given below gives rise to a natural way to estimate $\mathcal{L}_{10}$ in high dimension, which is the answer to what we call Problem \ref{Pb2}:
 \begin{question}\label{Pb2}
Given a learning set, construct $\widehat{\mathcal{L}}_{10}$ in order to get a satisfactory classification procedure in high dimension: a procedure that can be justified theoretically and with numerical experiment.
 \end{question}
 \indent Classical methods of classification break down when the dimensionality is extremely large. For example. Bickel and Levina \cite{bickel:2004fk} have studied the poor performances of Fisher discriminant  analysis. Although, the number of parameters to learn in order to build a classification rule seems to be responsible for the poor performance. In the sequel we shall give theoretical non-asymptotic results that emphasise this poor performances. To overcome the poor performance Bickel and Levina \cite{bickel:2004fk} propose to use a rule which relies on feature independence, Fan and Fan \cite{Fans:2007vn} propose to select the interesting features with a multiple testing procedure. Bickel and Levina give a theoretical study of a particular LDA procedure (i.e a LDA procedure based on a particular estimator $\widehat{\mathcal{L}}_{10}$), they do not study the QDA procedure. \\
\indent The selection of interesting features constitutes a reduction of the dimension of the space on which the classification rule acts. Feature selection is widely used in high dimensional classification, the procedures used for selection of interesting features are often motivated by theoretical results (see \cite{Fans:2007vn}). Unfortunately, these theoretical results are based on the following two postulates. On the one hand, features can be a priori divided into two parts, an interesting one and a non interesting one. On the other hand, selecting the interesting features is necessary and sufficient to get a good classification rule. If we accept that these postulates reflect nothing but a relatively clear intuition, we would like to give an analysis of the classification risk in order to justify a feature selection method based on multiple hypothesis testing.\\
\indent  Thresholding techniques are widely used in the non-parametric regression framework (see \cite{Candes:2006lk} for an introduction to the thresholding techniques), and as we shall see, the techniques can be used to give an answer to Problem  \ref{Pb2}. Also we believe that our answer to Problem \ref{Pb1} will shed light on the simple link that exists between the nonparametric regression and the classification problem.\\
\indent Functional data analysis is the study of data that lives in an infinite dimensional functional space. Hence curve classification is one of the problems it deals with. Since \cite{Grenander:1950fk}, functional data analysis has undergone further developments and especially in the context of classification (see for example \cite{Berlinet:2005at} and the references therein). In the gaussian setting, it is rather natural to expect results that are dimensionless and that can be applied to any abstract polish space. Hence, our answer to problem $1$ will be given in terms of $L_2(\gamma)$ norms, with $\gamma$ a gaussian measure, and since the constant involved in our theoretical result does not depend on the dimension, the extension from $\X=\R^p$ to more abstract spaces is straightforward. \\

Let us introduce some notation. In the whole article, $\gamma_{C,\mu}$ is a gaussian measure on $\X$ with mean $\mu$ and covariance $C$, $\gamma_{C}$ is the zero mean gaussian measure with covariance $C$ and $\gamma_{p}$ is the gaussian measure on $\R^p$ with mean zero and covariance $Id_{\R^p}$; $\Phi(x)$ is the cumulative distribution function of a real gaussian random variable with mean zero and variance one. If $\gamma$ is a probability measure on $\R^p$, $\|\Pi_{x}^{\bot}e\|_{L_2(\gamma)}$ will be the norm of the orthogonal projection in $L_2(\gamma)$ of the vector $e\in L_2(\gamma)$ on the hyper-plan orthogonal to $x\in L_2(\gamma)$; if $F\in \R^p$ $\|F\|_{L_2(\gamma)}$ will be the norm of the linear application $x\in \R^p\rightarrow \langle F,x \rangle_{\R^p}$. We shall use both the fact that if $F\in \R^p$ and $\gamma$ is a gaussian measure with mean zero and covariance $C$, then $\|F\|_{L_2(\gamma)}=\|C^{1/2}F\|_{\R^p}$; and that $\|F\|_{L_2(\gamma)}$ is a natural measure that can be extended in an infinite dimensional framework. The symmetric difference between two subsets of $\X$ $A$ and $B$ is denoted by $A\Delta B$, it is the set of all elements that are in $A\setminus B$ or in $B\setminus A$. If $A$ is a matrix of $\R^p$ $\|A\|_{HS}$ will be the Hilbert-Schmidt norm of the matrix $A$, $trace(A)$ the trace of $A$, and $q_{A}(x)$ will be given by $\langle Ax,x \rangle_{\R^p}$ for all $x\in \R^p$.\\

This article is organized as follows.  We give the main theoretical results -leading to a solution to Problem \ref{Pb1}- for the LDA procedure in Section \ref{linea}, and for the QDA procedure in Section \ref{quadrat}. In section \ref{procetude} we give our algorithm for high dimensional data classification and the theoretical result related to it. This leads to our contribution to Problem \ref{Pb2} in the light of our solution to Problem \ref{Pb1}. In Section \ref{application} we apply this algorithm to curve classification. In Section \ref{errapp} we introduce a geometric measure of error and derive its link with the excess risk. Section \ref{proofth1} is devoted to the proof of results given in Section \ref{linea} and Section \ref{proofth2}, to the proof of results given in Section \ref{quadrat} and possible generalisations.  

\section{Affine perturbation of affine rules}\label{linea}
\subsection{An solution to Problem \ref{Pb1}}
\subsubsection{Main result}
In this section, $\X=\R^p$, $C$ is a symmetric definite positive matrix and  $P_1=\gamma_{\mu_1,C}$ $P_0=\gamma_{\mu_0,C}$.  Under these hypotheses $\mathcal{L}_{10}(x)=\mathcal{L}^A_{10}(x)$ is affine on $\R^p$:
\begin{equation}\label{eque:RN}
\mathcal{L}^A_{10}(x)=\langle F_{10},x-s_{10}\rangle_{\R^p}\text{ where }s_{10}=\frac{\mu_1+\mu_0}{2}, \;F_{10}=C^{-1}m_{10} 
\end{equation}
and $m_{10}=\mu_1-\mu_0$. In this section, we restrict ourselves to an affine substitute $\widehat{\mathcal{L}}^A_{10}(x)$, we note $\hat{F}_{10}$ and $\hat{s}_{10}$ the corresponding substitutes of $F_{10}$ and $s_{10}$. We then decide that $X$ comes from $P_1$ if it is in    
\begin{equation}\label{e:baye}
\hat{V}=\left \{x\in \R^p \text{ st } \widehat{\mathcal{L}}^A_{10}(x)\geq 0\right \}.
\end{equation}

One can define the angle $\alpha$ in $L_2(\gamma_{C})$ between $F_{10}$ and $\hat{F}_{10}$ by 
 \begin{equation}\label{alpha}
\alpha=\arctan \left (\frac{\|\Pi_{F_{10}^{\bot}}\hat{F}_{10}\|_{L_2(\gamma_C)}\|F_{10}\|_{L_2(\gamma_C)}}{\langle \hat{F}_{10},F_{10}\rangle_{L_2(\gamma_C)}}\right ). 
\end{equation}
This angle will play a very important role in the sequel. We obtained the following solution to Problem \ref{Pb1}. 
\begin{theoreme}\label{the:32}
Let $\hat{F}_{10}$ and $\hat{s}_{10}$ be two $\R^p$ vectors and $\widehat{\mathcal{L}}^A_{10}(x)$ defined by substituting $\hat{F}_{10}$ and $\hat{s}_{10}$ for $F_{10}$ and $s_{10}$ in (\ref{eque:RN}). Let $P_1$ and $P_0$ be two gaussian measures on $\X=\R^p$ with the same covariance $C$ with means respectively $\mu_1$ and $\mu_0$.\\
\indent If $\hat{V}$ is the $\R^p$ subset defined by  (\ref{e:baye}), we have:

\[
\mathcal{C}(\1_{\hat{V}})-\mathcal{C}(\1_{V})\leq \frac{\mathcal{E}}{\|F_{10}\|_{L_2(\gamma_C)}}
\]
where
\begin{equation}\label{eq:er}
\mathcal{E}=\left ( \frac{4\|F_{10}\|_{L_2(\gamma_C)}}{\sqrt{\pi}\|\hat{F}_{10}\|_{L_2(\gamma_C)}}|\langle \hat{F}_{10},\hat{s}_{10}-s_{10}\rangle_{\R^p}|+\|F_{10}-\hat{F}_{10}\|_{L_2(\gamma_{C})}\right ).
\end{equation} 
\\
\indent If $|\langle \hat{F}_{10},\hat{s}_{10}-s_{10}\rangle_{\R^p}|\leq \frac{1}{4}|\langle\hat{F}_{10},F_{10}\rangle_{L_2(\gamma_C)}|$ and $\alpha\leq \pi/4$  ($\alpha$ is defined by (\ref{alpha})), then 
\begin{equation}\label{eq:er2}
\mathcal{C}(\1_{\hat{V}})-\mathcal{C}(\1_{V})\leq e^{-\frac{\|F_{10}\|_{L_2(\gamma_C)}^2}{32}} \frac{\mathcal{E}}{\|F_{10}\|_{L_2(\gamma_C)}}.
\end{equation}
\end{theoreme}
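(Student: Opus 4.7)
By Bayes optimality of $\1_V$, the excess risk satisfies
\[
\mathcal{C}(\1_{\hat V}) - \mathcal{C}(\1_V) \;=\; \tfrac{1}{2}\int_{V \Delta \hat V}|p_1 - p_0|\,dx,
\]
where $p_0, p_1$ are the Lebesgue densities of $P_0, P_1$. I would then standardize via the affine map $\tilde x = C^{-1/2}(x - s_{10})$, which sends $P_1, P_0$ to standard Gaussians shifted by $\pm a/2$, with $a := C^{-1/2}m_{10}$ (so $\|a\| = \|F_{10}\|_{L_2(\gamma_C)} =: r$), and turns the two half-spaces into
\[
V = \{\langle a, \tilde x\rangle \geq 0\}, \qquad \hat V = \{\langle b, \tilde x\rangle \geq \delta\},
\]
with $b := C^{1/2}\hat F_{10}$, $\|b\| = \|\hat F_{10}\|_{L_2(\gamma_C)}$, and $\delta := \langle \hat F_{10}, \hat s_{10} - s_{10}\rangle_{\R^p}$. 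The $L_2(\gamma_C)$-angle $\alpha$ between $F_{10}, \hat F_{10}$ coincides with the $\R^p$-angle between $a, b$. Since the integrand depends on $\tilde x$ only through its projection onto $\mathrm{span}(a,b)$, integrating out the $(p-2)$ orthogonal directions leaves a 2D Gaussian integral with integrand $\phi(u_2)\,|\phi(u_1 - r/2) - \phi(u_1 + r/2)|$ in orthonormal coordinates with $e_1 = a/\|a\|$.

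To handle this 2D integral, I would use the symmetric-difference triangle inequality
\[
V \Delta \hat V \;\subseteq\; (V \Delta \hat V_0) \cup (\hat V_0 \Delta \hat V), \qquad \hat V_0 := \{\langle b, \tilde x\rangle \geq 0\},
\]
which splits the error region into a \emph{wedge} through the origin of angular opening $\alpha$ and a \emph{slab} of width $|\delta|/\|b\|$ normal to $\hat V$. The slab contribution is controlled by the 1D bound $|\phi(u - r/2) - \phi(u + r/2)| \leq 2\phi(0) = \sqrt{2/\pi}$, producing the summand $(4/\sqrt\pi)(r/\|b\|)|\delta|$ of $\mathcal{E}$. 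The wedge contribution is bounded by a direct 2D Gaussian computation combined with the geometric identity $\|b\|\sin\alpha = \|\Pi_{F_{10}^\perp}\hat F_{10}\|_{L_2(\gamma_C)} \leq \|F_{10} - \hat F_{10}\|_{L_2(\gamma_C)}$, producing the summand $\|F_{10} - \hat F_{10}\|_{L_2(\gamma_C)}$. Summing the two contributions and dividing by $r$ yields the first inequality~(\ref{eq:er}).

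For the sharper statement~(\ref{eq:er2}) I would rewrite the integrand via the pointwise identity
\[
\tfrac{1}{2}|p_1 - p_0| \;=\; \sqrt{p_0 p_1}\,\bigl|\sinh(\mathcal{L}_{10}/2)\bigr| \;=\; e^{-r^2/8}\,\phi_p(\tilde x)\,\bigl|\sinh(\langle a, \tilde x\rangle/2)\bigr|,
\]
in which the geometric mean $\sqrt{p_0 p_1}$ already supplies one exponential factor. The hypotheses $\alpha \leq \pi/4$ and $|\delta| \leq |\langle a, b\rangle|/4$ should then localize $V \Delta \hat V$ inside the region where the $|\sinh|$ weight, combined with $\phi_p$, acquires an additional factor $e^{-r^2/32}$, via a careful 2D Gaussian tail computation, while the overall dependence on $|\delta|$ and $\|F_{10} - \hat F_{10}\|_{L_2(\gamma_C)}$ remains linear.

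The main obstacle is this last step: verifying that the two hypotheses are precisely calibrated to extract the sharp exponent $1/32$ from the 2D integral. Propagating the $e^{-r^2/32}$ decay through the wedge/slab decomposition while preserving linear dependence on the perturbation parameters requires delicate geometric bookkeeping, and this is where I would expect most of the technical work to lie.
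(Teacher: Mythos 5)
Your strategy is sound and, at its core, lands on the same geometry as the paper: both arguments reduce to a two--dimensional Gaussian computation and both decompose the error region into a wedge through the intersection of the two separating lines plus a slab accounting for the offset $d_0=\langle \hat{F}_{10},\hat{s}_{10}-s_{10}\rangle_{\R^p}$; the ``direct 2D Gaussian computation'' you defer is exactly what Lemma \ref{lemprinc} carries out with its four elementary properties (angular portions have measure $\alpha/2\pi$, translation across a line at distance $h$ costs a factor $e^{-h^2/2}$, infinite rectangles). The genuine differences are in the bookkeeping. First, you start from the exact identity $\mathcal{C}(\1_{\hat V})-\mathcal{C}(\1_V)=\frac{1}{2}\int_{V\Delta \hat V}|p_1-p_0|$, whereas the paper passes to the one-sided learning error via (\ref{errapprr}) and then proves Theorem \ref{the:1}; your starting point is sharper and perfectly legitimate (it is the identity (\ref{duriet}) used later in Section \ref{errapp}). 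Second, for (\ref{eq:er2}) the paper extracts the exponential factor from the translation property: the hypotheses $\alpha\leq\pi/4$ and $|d_0|\leq\frac14|\langle\hat F_{10},F_{10}\rangle_{L_2(\gamma_C)}|$ place the shifted Gaussian centres at distance of order $\frac{1}{4}\|F_{10}\|_{L_2(\gamma_C)}\cos(\alpha)$ from the perturbed hyperplane, which is where the $e^{-\|F_{10}\|^2\cos^2(\alpha)/32}$ in (\ref{eque:th1}) comes from. Your $\sqrt{p_0p_1}\,|\sinh(\mathcal{L}_{10}/2)|$ factorisation is a different mechanism: it supplies $e^{-r^2/8}$ up front, but the residual weight $\phi_p|\sinh(\langle a,\cdot\rangle/2)|$ integrates to $\frac12 e^{r^2/8}d_1(P_1,P_0)$ over the whole space, so you must still show that its restriction to $V\Delta\hat V$ is $O(e^{3r^2/32}\,\mathcal{E}/r)$ with the dependence on $|d_0|$ and $\|F_{10}-\hat F_{10}\|$ kept linear. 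This is doable, but it is genuinely more delicate than the paper's route, and you rightly identify it as the hard step.

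Two concrete points need repair. (a) The first inequality of the theorem carries no hypothesis on $\alpha$ or on the sign of $\langle\hat F_{10},F_{10}\rangle_{L_2(\gamma_C)}$, and your wedge/slab estimates only produce a bound of order $\|F_{10}-\hat F_{10}\|_{L_2(\gamma_C)}/\|F_{10}\|_{L_2(\gamma_C)}$ when the wedge is narrow. The paper disposes of the remaining configurations by three trivial cases, and you need an analogue: for instance, if $\alpha\geq\pi/4$ then $\|F_{10}-\hat F_{10}\|_{L_2(\gamma_C)}\geq \|F_{10}\|_{L_2(\gamma_C)}/\sqrt{2}$, so $\mathcal{E}/\|F_{10}\|_{L_2(\gamma_C)}\geq 1/\sqrt{2}\geq \frac12 d_1(P_1,P_0)\geq \mathcal{C}(\1_{\hat V})-\mathcal{C}(\1_V)$ and there is nothing to prove. (b) For the wedge term, the useful form of your geometric identity is $\|F_{10}-\hat F_{10}\|_{L_2(\gamma_C)}\geq \|F_{10}\|_{L_2(\gamma_C)}\sin(\alpha)$ (distance from $F_{10}$ to the line spanned by $\hat F_{10}$), not the inequality $\geq\|\hat F_{10}\|_{L_2(\gamma_C)}\sin(\alpha)$ that you state: the latter degenerates when $\|\hat F_{10}\|_{L_2(\gamma_C)}$ is small, whereas the former converts a bound ``wedge contribution $\leq\sin(\alpha)$'' directly into the required summand of $\mathcal{E}/\|F_{10}\|_{L_2(\gamma_C)}$. (The paper avoids this by adding the case hypothesis $\|\hat F_{10}\|_{L_2(\gamma_C)}\geq\frac12\|F_{10}\|_{L_2(\gamma_C)}$ in its step 4.) With these adjustments your outline fills in to a complete proof.
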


The proof of this theorem is given in Section \ref{proofth1} at Sub-section \ref{dthe32}. It is a consequence of  Theorem \ref{the:1} obtained by simple geometric methods emphasizing the fact that $P_0(X\in V \setminus \hat{V})$ is the measure of an area between two hyperplans obtained by a rotation of angle $\alpha$. The proof also uses the inequality
\begin{equation}\label{errapprr}
\mathcal{C}(\1_{\hat{V}})-\mathcal{C}(\1_{V})\leq \frac{1}{2}\left (P_1(X\in V\setminus \hat{V})+P_0(X\in \hat{V} \setminus V)\right )=\mathcal{R}(\1_{\hat{V}}),
\end{equation}
which defines $\mathcal{R}(\1_{\hat{V}})$. We call $\mathcal{R}(\1_{\hat{V}})$ the learning error, it is the probability of making a a wrong classification with $g(x)=\1_{\hat{V}}(x)$ and a good classification with the optimal rule $g^*=\1_{V}$. We will use and motivate more deeply this measure of error in Section \ref{errapp}. Let us now give comments on Theorem \ref{the:32}.

\subsubsection{General comments} 
 If we note 
\begin{equation}\label{nottt}
\delta=\hat{F}_{10}-F_{10} \text{ and } d_0=\langle \hat{F}_{10},s_{10}-\hat{s}_{10} \rangle_{\R^p},
\end{equation}
we have 
\[\hat{\mathcal{L}}_{10}(x)=\mathcal{L}_{10}(x)+\langle \delta,x-s_{10}\rangle_{\R^p}+d_0.\]
Also, in the sequel we will talk about affine perturbation of the optimal rule. The preceding theorem results from the study of affine perturbations of affine rules. \\
The case where $d_0=0$ will be studied later but we can already note that in this case, Theorem \ref{the:32} yields 
\[\mathcal{C}(\1_{\hat{V}})-\mathcal{C}(\1_{V})\leq \frac{\|\mathcal{L}_{10}-\widehat{\mathcal{L}}_{10}\|_{L_2(\gamma_{C,s_{10}})}}{\|\mathcal{L}_{10}\|_{L_2(\gamma_{C,s_{10}})}},\]
which is a nice answer to Problem \ref{Pb1}. In the sequel (see Section \ref{proofth1} Theorem \ref{the:1}), we shall see that it is optimal whenever $\|\mathcal{L}_{10}\|_{L_2(\gamma_{C,s_{10}})}$ does not become to large. \\

The quantity $r=\|F_{10}\|_{L_2(\gamma_C)}$ measures the theoretical separation of the data. Indeed it is the $L_1$ distance between $P_1$ and $P_0$, defined by $d_1(P_1,P_0)=\int |dP_1-dP_0|$ that measures this separation: it is known that 
$d_1(P_1,P_0)=(1-2\mathcal{C}(\1_{V}))$, which implies
 \[d_1(P_1,P_0)=\Phi\left (-\frac{1}{2}r\right )-\Phi\left (\frac{1}{2}r\right ).\]
  Also, $d_1(P_1,P_0)\sim r  \text{ when }r\rightarrow 0$, and then the data cannot be distinguished by any rule. The data tends to be perfectly separated when $d_1(P_1,P_0) \rightarrow 1$. In this case, $r\rightarrow \infty$ and  
 \[d_1(P_1,P_0)\sim 1-\frac{2e^{-\frac{r^2}{8}}}{r\sqrt{2\pi}}.\]
 Also note that in the infinite dimensional setting two gaussian measures $P_0$ and $P_1$ are either orthogonal (there exists a Borelian set $A$ such that $P_1(A)=P_0(\X\setminus A)=0$ ) or equivalent (i.e mutually absolutely continuous) and the latter case appears if and only if $r$ is finite.\\
 
 Although, if $\mathcal{E}$ measures the estimation error, 
\begin{equation}\label{termehyp}
\frac{1}{\|F_{10}\|_{L_2(\gamma_{C})}}\;\;\;\text{ and }\;\;\;e^{-\frac{\|F_{10}\|_{L_2(\gamma_C)}^2}{32}}
\end{equation} 
in the upper bounds (\ref{eq:er}) and  (\ref{eq:er2}), are linked with the proximity of the measures $P_0$ and $P_1$. When $\|F_{10}\|_{L_2(\gamma_C)}^2$ is large, data are well separated and  the terms in (\ref{termehyp}) measure the impact of this separation on the excess risk. We believe that when $\|F_{10}\|_{L_2(\gamma_C)}^2$ tends to $0$, $\frac{1}{\|F_{10}\|_{L_2(\gamma_C)}}$ is linked to the error measure $\mathcal{R}(\1_{V})$ used in the proof (defined by (\ref{errapprr})). Indeed, it is not correct to think that the classification problem is harder (in the sense of the excess risk) when data are not well separated: straightforward computation leads to 
\[\forall \tilde{V}\subset \R^p\;\;\; \mathcal{C}(\1_{\tilde{V}})-\mathcal{C}(g^*)\leq \frac{1}{2}d_1(P_1,P_0).\]
As we shall see in the sequel (see Theorem \ref{theorem-recip}) $\mathcal{R}(\1_{V})$ behaves almost like the excess risk if and only if $d_1(P_0,P_1)$ does not tend to $0$. \\

The learning set has to be used to elaborate estimators $\hat{F}_{10}$ and $\hat{s}_{10}$ of $F_{10}$ and $s_{10}$. The preceding theorem allows us to quantify what intuition clearly indicates: a good estimation of the parameters $F_{10}$ and $s_{10}$ (or more indirectly $\mu_1,\mu_0$ and $C$) leads to a good classification rule. These estimators must lead to a small excess risk and by the preceding theorem
\begin{equation}
\E_{P^{\otimes n}}[\mathcal{C}(\1_{\hat{V}})-\mathcal{C}(\1_{V})]\leq \frac{\E_{P^{\otimes n}}[\mathcal{E}]}{\|F_{10}\|_{L_2(\gamma_C)}},
\end{equation}
where $P^{\otimes n}$ is the learning set distribution.\\
\indent It seems that little is known on theoretical behaviour of the LDA procedure (a plug-in procedure) with respect to the optimal rule (the Bayes rule). The result that is classically used (see for example Anderson and Bahadur \cite{Anderson:1962fk}) to show the consistency of a LDA rule using estimators $\hat{F}_{10}=\widehat{C^{-1}}\hat{m}_{10}=\widehat{C^{-1}}(\hat{\mu}_1-\hat{\mu}_0)$ and $\hat{s}_{10}=(\hat{\mu}_1+\hat{\mu}_0)/2$ is that the probability to observe $X\leadsto \gamma_{C,\mu_0}$ (in that case $X$ comes from class $0$) falling into $\hat{V}$ (and affect it to class $1$) is  
\begin{equation}\label{levina}
P\left (\langle\hat{F}_{10},C^{1/2}\xi\rangle_{\R^p}\geq \langle \hat{s}_{10}-\mu_0, \hat{F}_{10}\rangle_{\R^p} |\mathcal{A}\right )=1-\Phi\left (\frac{\langle \hat{ s}_{10}-\mu_0, \hat{F}_{10}\rangle_{\R^p}}{\|\hat{F}_{10}\|_{L_2(\R^p,\gamma_C)}} \right ),
\end{equation}
where $\mathcal{A}$ is the $\sigma$-field generated by the learning set, and $\xi$ is a centered gaussian random vector of $\R^p$ with covariance $Id_{\R^p}$. Note that the proof of (\ref{levina}) follows from a straightforward calculation. We believe that a direct analysis of this error term misses the geometrical aspect of the problem. In addition, this error has to be compared with the lowest possible error $\mathcal{C}(g^*)$. Note that for the LDA procedure in a high dimensional framework, an analysis of the worst case excess risk has been done with (\ref{levina}) by Bickel and Levina \cite{bickel:2004fk} for a particular choice of $\hat{F}_{10}$ and $\hat{s}_{10}$. Our Theorem, because it is intrinsic to the classification procedure, is singularly different from the type of result that they obtain. In particular, it will allow us to establish a revealing link between dimensionality reduction and thresholding estimation.

\subsubsection{The constant part of the perturbation}
The error due to the constant part of the perturbation ($d_0$ in equation (\ref{nottt})), is measured by 
\[\frac{4}{\sqrt{\pi}}\left |\left \langle \frac{\hat{F}_{10}}{\|\hat{F}_{10}\|_{L_2(\gamma)}},\hat{s}_{10}-s_{10}\right \rangle_{\R^p}\right |.\]
In order to give a first simple analysis of this term, we are going to suppose that $\hat{F}_{10}$ and $\hat{s}_{10}$ are independent. This independence can be obtained by keeping a part of the learning set for the estimation of  $F_{10}$ and a part for the estimation of $s_{10}$. In thisat case, if $n'$ observations of the learning set were used to construct $\hat{s}_{10}$, and if $\hat{s}_{10}=(\bar{\mu}_1+\bar{\mu}_0)/2$ ($\bar{\mu}_i$ is the empirical mean of the observations of group $i$), then, straightforward calculation leads to
\[\E_{P^{\otimes n}}\left [\frac{4}{\sqrt{\pi}\|\hat{F}_{10}\|_{L_2(\gamma)}}|\langle \hat{F}_{10},\hat{s}_{10}-s_{10} \rangle_{\R^p}|\right ]\leq \frac{8}{\sqrt{2n'}\pi}.\]
Ultimately, the difficulty of the problem does not come from the constant part of the perturbation, but from the linear part.\\

The conditions under which the second inequality (\ref{eq:er2}) of the theorem is given shall easily be satisfied. The second condition  is that $\alpha\leq \frac{\pi}{4}$. It is not difficult to satisfy if $\hat{F}_{10}$ and $F_{10}$ are close enough to each other. The first one is verified if the second is and if we have:
\[\left |\left \langle \frac{\hat{F}_{10}}{\|\hat{F}_{10}\|_{L_2(\gamma_C)}}, s_{10}-\hat{s}_{10}\right \rangle_{\R^p} \right |\leq \frac{\sqrt{2}}{8} \|F_{10}\|_{L_2(\gamma_C)}.\]
If for example $\hat{s}_{10}=(\bar{\mu}_1+\bar{\mu}_0)/2$ and the learning set is composed of $n'$ observations uniquely used for the estimation of $s_{10}$, then, given the rest of the learning set, $\langle \frac{\hat{F}_{10}}{\|\hat{F}_{10}\|_{L_2(\gamma_C)}}, s_{10}-\hat{s}_{10}\rangle_{\R^p} \leadsto\gamma_{\frac{1}{n'}}$ and the preceding condition is satisfied with probability 
\[\frac{1}{2}\Phi\left (\frac{\sqrt{2}}{8} \|F_{10}\|_{L_2(\gamma_C)}n'\right ).\]

\subsubsection{The linear part of the perturbation}
As we shall explain in the proof of Theorem \ref{the:32}, the angle $\alpha$ defined by (\ref{alpha}) measures quite well the error due to the linear part of the perturbation. Also, the upper bound given in the preceding theorem is not sharp everywhere. Indeed, if $\beta\in \R$, and $\hat{F}_{10}=\beta F_{10}$, the error $\mathcal{R}(\1_{V})$ is null and the bound (\ref{eq:er}) can be arbitrarily large. We believe that the study of methods designed to estimate direction (parameter on the sphere $\mathbb{S}^{p-1}$) in a high dimensional setting are required. We only want to give the link between the problem of estimating $F_{10}$ as a vector of $\R^p$ and the problem of estimating $F_{10}$ in order to get small $\mathcal{C}(\1_{\hat{V}})$. In addition, this invariance of the error under dilatation only exists in the direction $F_{10}$ which is unknown and is seems to be quite tricky to make a direct use of it.\\
%\begin{figure}
%\center
%\includegraphics[width=7cm]{}
%\includegraphics[width=5.7cm]{}
%\caption{Separation of the data before and after rotation}
%\label{fig:graphvarianceegales}
%\end{figure}
\indent Let us give a simple example to illustrate the interest of the link between estimation and learning.
\begin{exemple}\label{ex1}
 Let $\sigma>0$, suppose  $X\leadsto \gamma_{\frac{1}{n} I_p,F_{10}}$, $C=I_p$ and that $s_{10}$ is known. In the estimation problem of $F_{10}$ for classification we wish to recover $F_{10}$ from the observation $X$ and the error is measured by
 \[
 \mathcal{R}(\1_{\hat{V}})\leq \frac{\|F_{10}-\hat{F}_{10}\|_{L_2(\gamma_{C})}}{\|F_{10}\|_{L_2(\gamma_{C})}}=\frac{\|\hat{F}_{10}-F_{10}\|_{\R^p}}{\|F_{10}\|_{\R^p}}.
\]
\end{exemple}

  In Example \ref{ex1} the problem is exactly the one we encounter in the regression framework, while estimating $F_{10}$ from $p$ noisy observations of $(F_{10}[i])_{i=1,\dots,p}$ with an error measured with a $l^2$ norm. Suppose now that we want to let $p$ grow to infinity. If the coefficients of $F_{10}$ decrease sufficiently fast, for example if $F_{10}\in l^q(R)$ with $q<2$, then (see for example \cite{Candes:2006lk}), it is possible to obtain a good statistical estimation of $F_{10}$ by setting to zero the coefficient that are are, in absolute value, under a threshold. It is a thresholding estimation and we shall use this type of procedure in Section \ref{procetude}. In the case where we observe $X$ from the distribution $\gamma_{C/n,m_{10}}$ (or equivalently $X^i$, $i=0,1,$ from the distribution $\gamma_{2C/n,\mu_i}$) and if $C\neq I_p$ is known, the problem can be reduced to the preceding particular case thanks to the transformation $x\rightarrow C^{-1/2}x$. When $C$ is unknown, the parallel with the estimation framework is more delicate because the error $\mathcal{E}$ depends on $C$.

 \begin{remarque}
Replacing coefficients by zero in the regression framework of Example \ref{ex1} is equivalent to reducing the dimension of the space on which the chosen classification rule acts. Selecting the significant coefficients of $F_{10}$ is equivalent to finding the direction $e_{i}\in \R^p$ for which $|\langle C^{-1/2}(\mu_1-\mu_0),e_i\rangle_{\R^p}|^2$ is large. This is almost equivalent to finding the direction in which a theoretical version of the ratio between inter-variance and intra-variance is big. This type of heuristic with empirical quantities has been used by Fisher \cite{Fisher:1936fk}, whose strategy is to maximize the Rayleigh quotient (see for example \citep{Friedman:2001wq}). The point is that the use of empirical quantities in high dimension can be catastrophic (see next subsection). 
\end{remarque}
\subsection{Procedures to avoid in high dimension}
We are going to give two results that will lead to the following precepts in the problem of estimating $\mathcal{L}_{10}$. While giving a solution to Problem \ref{Pb2}, 
\begin{enumerate}
\item one should not try to estimate the full covariance matrix $C$ from the data,  
\item one should restrict the possible values of $m_{10}$ to a (sufficiently small) subset of $\R^p$.
\end{enumerate}
These precepts have been known for some time, but we give precise non-asymptotic results emphasising them. The first fact is a consequence of Proposition \ref{incons} below while the second one results from Proposition \ref{properrlin}. \\

These two proposition arise from the use of a more geometric error measure, the learning error $\mathcal{R}$, which has already been defined by (\ref{errapprr}) and which shall be studied in more detail in Section \ref{errapp}. In fact it is an easy geometric exercise, for one who knows a little on gaussian measure, to obtain the following lower bound
\begin{equation}\label{lower1}
\mathcal{R}(\1_{\hat{V}})\geq \frac{|\alpha|}{2\pi}e^{-\frac{\|F_{10}\|^2_{L_2(\gamma_C)}}{8}},
\end{equation}
(which is the last point of Theorem \ref{the:1} in Section \ref{proofth1}) where $\alpha$, the angle in $L_2(\gamma_C)$ between $F_{10}$ and $\hat{F}_{10}$, is defined by (\ref{alpha}).
On the other hand, Theorem \ref{theorem-recip} from Section \ref{errapp} leads to 
\[
\mathcal{C}(g)-\mathcal{C}(g^*) \geq\min\left \{ \frac{\sqrt{2\pi}}{2*16^2}\|C^{-1/2}m_{10}\|_{\R^p}e^{\frac{\|C^{-1/2}m_{10}\|_{\R^p}^2}{8}}\mathcal{R}(g)^{2},\frac{\mathcal{R}(g)}{8}\right \},
\]
for all measurable $g:\X\rightarrow \{0,1\}$. Also, it suffices to get a lower bound on the Learning error $\mathcal{R}(\1_{\hat{V}})$ by the use of (\ref{lower1}) to get (a good) lower bound on the excess Risk when $d_1(P_0,P_1)$ cannot be as closed as desired from zero. This is what we shall do. For the case where the distributions $P_1$ and $P_0$ are almost undistinguishable ($d_1(P_1,P_0)\rightarrow 0$) we refer to the discussion in Section \ref{errapp}.
\subsubsection{One should not try to identify the correlation structure}
Let us recall that if $A$ is a definite positive matrix, one can define its generalised inverse, also called Moore-Penrose pseudo-inverse: $C^-$. This generalised inverse $C^-$ arises from the decomposition $\R^p=Ker(C) \oplus Ker(C)^{\bot}$. On $Ker(C)$, $C^-$ is null, and on $Ker(C)^{\bot}$, $C^-$ equals the inverse of $\tilde{C}=C_{|Ker(C)^{\bot}}$ (  i.e $\tilde{C}$ is the restriction of $C$ to $Ker(C)^{\bot}$). 
\begin{proposition}\label{incons}
Suppose we are given $X_1,\dots,X_n$ drawn independently from a gaussian Probability distribution $P$ with mean zero and covariance $C$ on $\R^p$. Let $\hat{C}$ be the empirical covariance and $\hat{C}^-$ its generalised inverse. If $\hat{F}_{10}=\hat{C}^-m_{10}$ and $\hat{s}_{10}=s_{10}$, the classification rule $\1_{\hat{V}}$ defined by (\ref{e:baye}) leads to   
  \[\E_{P^{\otimes n}}[\mathcal{R}(\1_{\hat{V}})]\geq \frac{\arccos \left (\sqrt{\frac{n}{p}}\right )}{2\pi}e^{-\frac{\|F_{10}\|^2_{L_2(\gamma_C)}}{8}}.\]  
  \end{proposition}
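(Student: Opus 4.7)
My plan is to apply the deterministic lower bound (\ref{lower1}) to reduce the claim to $\E_{P^{\otimes n}}[|\alpha|] \geq \arccos\sqrt{n/p}$, and then exploit the low-dimensionality of the image of $\hat{C}^-$ geometrically. Taking expectations in (\ref{lower1}) factorizes the exponential out and leaves exactly this angle bound to establish.

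Since $\hat{C}$ has rank $n$ almost surely (for $n \leq p$), $\hat{F}_{10} = \hat{C}^- m_{10}$ lies in $\operatorname{Im}(\hat{C}) = \operatorname{span}(X_1,\dots,X_n)$, an $n$-dimensional subspace. I would whiten via $Y_i = C^{-1/2}X_i \sim \gamma_p$ i.i.d., set $\tilde m = C^{-1/2}m_{10}$, and use the isometry $F \mapsto C^{1/2}F$ from $L_2(\gamma_C)$ onto $\R^p$ (which preserves angles since $\langle F,G\rangle_{L_2(\gamma_C)} = \langle C^{1/2}F, C^{1/2}G\rangle_{\R^p}$), so that $\alpha$ coincides with the Euclidean angle between $C^{1/2}\hat{F}_{10}$ and $C^{1/2}F_{10} = \tilde m$. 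The whitened estimator $C^{1/2}\hat{F}_{10}$ sits in an $n$-dimensional data-dependent subspace $V_n \subset \R^p$, so a Cauchy--Schwarz argument in the eigenbasis of $\hat{C}$ restricted to its image yields $0 \leq \cos\alpha \leq \|P_{V_n}\tilde m\|/\|\tilde m\| = \cos\theta$, hence $|\alpha| \geq \theta$ almost surely, where $\theta$ denotes the principal angle between $\tilde m$ and $V_n$. A rotation-invariance argument reduces to the isotropic case where $V_n$ is uniformly distributed on the Grassmannian $G_{n,p}$, giving $\cos^2\theta \sim \operatorname{Beta}(n/2,(p-n)/2)$ with $\E[\cos^2\theta] = n/p$, and $\theta$ has density proportional to $\cos^{n-1}\theta\,\sin^{p-n-1}\theta$ on $[0, \pi/2]$.

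The main obstacle is the final step: proving $\E[\theta] \geq \arccos\sqrt{n/p}$, equivalently $\cos^2(\E[\theta]) \leq \E[\cos^2\theta]$. A naive Jensen argument fails because $\phi(u) := \arccos\sqrt u$ is convex on $(0,1/2)$ and concave on $(1/2, 1)$, so it has no global Jensen bound. I would prove the inequality by a tangent-line comparison at $u_0 = n/p$: the tangent $L(u) = \phi(u_0) + \phi'(u_0)(u - u_0)$ satisfies $\E[L(U)] = \arccos\sqrt{n/p}$ by linearity (since $\E[U] = n/p$), and a direct analysis of the sign of $\phi(u) - L(u)$ against the explicit Beta density shows that the positive excess of $\phi$ above its tangent on the convex region $(0, 1/2)$ dominates the deficit on the concave region $(1/2, 1)$, with the Beta density weighting in the correct way. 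The bound is sharp: equality $\E[\theta] = \arccos\sqrt{n/p}$ holds when $n = p/2$ (by reflection symmetry of the density about $\pi/4$) and when $n = 1, p = 2$ (uniform density on $[0, \pi/2]$), which provides a useful sanity check. Substituting $\E[|\alpha|] \geq \E[\theta] \geq \arccos\sqrt{n/p}$ into (\ref{lower1}) then delivers the stated lower bound on $\E_{P^{\otimes n}}[\mathcal{R}(\1_{\hat V})]$.
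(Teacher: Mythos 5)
Your architecture is the same as the paper's: reduce via (\ref{lower1}) to showing $\E_{P^{\otimes n}}[|\alpha|]\geq\arccos\sqrt{n/p}$, bound $|\alpha|$ from below by the principal angle $\theta$ between $C^{-1/2}m_{10}$ and the $n$-dimensional span of the data (this is the paper's Cauchy--Schwarz step in the random frame $(\xi_i)$), and use exchangeability of that frame to get $\E[\cos^2\theta]=n/p$. You have also correctly located the delicate point: passing from $\E[\cos^2\theta]=n/p$ to $\E[\theta]\geq\arccos\sqrt{n/p}$ is not a plain Jensen argument, since $u\mapsto\arccos\sqrt{u}$ is convex on $(0,1/2)$ and concave on $(1/2,1)$. (The paper at this point invokes Jensen together with ``concavity of $\arccos$ on $[0,1]$'' to claim $\E[\arccos Z]\geq\arccos\E[Z]$, which concavity alone does not deliver, so the crux you identify is real.)

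The gap is that your resolution of this crux is only asserted, and the inequality you assert is in fact false on part of the admissible range. Take $C=I$, $n=2$, $p=3$: then $V_n$ is the orthogonal complement of a uniform direction $w$, $\sin\theta=|\langle m_{10}/\|m_{10}\|,w\rangle|$ is uniform on $[0,1]$, so $\E[\theta]=\int_0^1\arcsin t\,dt=\pi/2-1\approx 0.571$, whereas $\arccos\sqrt{2/3}\approx 0.616$; equivalently $\cos^2(\E[\theta])\approx 0.708>2/3=\E[\cos^2\theta]$. So the claim that ``the positive excess on the convex region dominates the deficit on the concave region'' fails exactly when $n/p>1/2$, where the Beta mass sits in the concave region, and even for $n/p<1/2$ you have not actually carried out the tangent-line computation, only announced its outcome. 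Consequently the chain $\E[|\alpha|]\geq\E[\theta]\geq\arccos\sqrt{n/p}$ cannot be closed for all $n\leq p$ by any argument that uses only the membership $\hat F_{10}\in\mathrm{span}(X_1,\dots,X_n)$: you must either restrict the range of $(n,p)$ and prove the Beta inequality there, or retain the additional misalignment produced by the weights $1/\lambda_i$ inside the span, which both you and the paper discard at the Cauchy--Schwarz step. A secondary point to repair: the isometry $F\mapsto C^{1/2}F$ sends $\mathrm{span}(X_1,\dots,X_n)$ to $\mathrm{span}(C^{1/2}X_1,\dots,C^{1/2}X_n)$, and $C^{1/2}X_i$ has covariance $C^{2}$, so your subspace $V_n$ is not uniform on the Grassmannian unless $C$ is a multiple of the identity; the rotation-invariance reduction needs an argument (the paper handles this, not unproblematically, through an explicit identity for $C^{1/2}\hat C^{-}C^{1/2}$).
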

 Before we prove this proposition, let us comment it in few words.\\
 \indent {\bf Comment}. As a particular application of this proposition, we see that the Fisher rule performs badly when $p>>n$, which was already given in \cite{bickel:2004fk}, but in a different form (asymptotic and not in a direct comparison of the risk with the Bayes risk). Many alternatives to the estimation of the correlation structure can be used, based for example on approximation theory of covariance operators, together with model selection procedure or more sophisticated aggregation procedure. Much work has already been done in this direction, see for example \cite{Bickel:2007fk} and the references therein. The approximation procedure has to be linked with a statistical hypothesis, as it is in the case when stationarity assumptions are made that lead to a Toeplitz covariance matrix $C$ (i.e $C_{ij}=c(i-j)$ with $c:\mathbb{Z}\rightarrow \R$ a $p$-perioric sequence). These matrices are circular convolution operators and are diagonal in the discrete Fourier Basis $(g^m)_{0\leq m<p}$ where
\[(g^{m})_k=\frac{1}{\sqrt{p}}\exp\left (\frac{2i\pi mk}{p}\right ).\]   
 This is roughly the type of harmonic analysis that is used in Bickel and Levina \cite{bickel:2004fk} and combined with an approximation in \cite{bestbasis}. Under assumption such as commutation (or quasi-commutation) of the covariance with a given family of projections, the covariance matrix can be search in the set of operator given by a spectral density. This leads to a huge reduction of the parameters to estimate. Let us finally notice that the use of harmonic analysis of stationarity in curve classification becomes very interesting when one considers the larger class of group stationary-processes (see \cite{Yazici:2004mz}) or semi-group stationary processes (see \cite{Girardin:2003rt}). \\
\begin{proof}
 The proof is based on ideas from Bickel and Levina \cite{bickel:2004fk} used in their Theorem $1$:
 if $C$ is the identity their exist $\xi_1,\dots,\xi_p$, $p$ $\R^p$ valued random variables forming an orthonormal basis of $\R^p$,  a random vector $(\lambda_1,\dots,\lambda_n)$ of $\R^n$ whose property are the following. 
  \begin{enumerate}
  \item The $\lambda_i$ are independent of each other, independent of $(\xi_i)_{i=1,\dots,p}$, and $n\lambda_i$ follows a $\chi^2$ distribution with $n-1$ degrees of freedom.
  \item For every $i$, $\xi_i$ is drawn in an independent and uniform fashion on the intersection of the unitary sphere of $\R^p$ and the orthogonal to $\xi_{1},\dots,\xi_{i-1}$.
  \item The empirical estimator $\hat{C}$ of $C$ satisfies:  
\[
  \hat{C}=\sum_{i=1}^n\lambda_i\xi_i\otimes \xi_i, 
\]
where if $x,y\in \R^p$, $x\otimes y$ is the linear operator of $\R^p$ that associates to $z\in \R^p$ the vector $\langle x,z \rangle_{\R^p}y$.  
\end{enumerate}

When $C$ does not necessarily equal $I_p$, we get, $\gamma_C-$almost-surely:
\[
C^{-1/2}\hat{C}C^{-1/2}=\sum_{i=1}^n\lambda_i\xi_i\otimes \xi_i, \text{ et }C^{1/2}\hat{C}^{-}C^{1/2}=\sum_{i=1}^n\frac{1}{\lambda_i}\xi_i\otimes \xi_i.
\]
Then, if we define $\beta_i=\langle C^{-1/2}m_{10},\xi_i\rangle_{\R^p}^2$, we have the following equations
\begin{equation}\label{bick1}
\langle C^{-1}m_{10},\hat{C}^{-}m_{10}\rangle_{L_2(\gamma_C)}=\langle C^{-1/2}m_{10},C^{1/2}\hat{C}^{-}C^{1/2}C^{-1/2}m_{10}\rangle_{\R^p}=\sum_{i=1}^n \frac{\beta_i}{\lambda_i},
\end{equation}
\begin{equation}\label{bick2}
\|\hat{F}_{10}\|_{L_2(\gamma_C)}^2=\sum_{i=1}^n\frac{\beta_i}{\lambda_i^2} \text{ et }\|F_{10}\|_{L_2(\gamma_C)}^2=\sum_{i=1}^p\beta_i.
\end{equation}
For reasons of symmetry  (the $\xi_i$ are drawn uniformly on the sphere), we have for all subsets $I_n$ from $\{1,\dots,p\}$ of size $n$ :
\[
u_{I_n,p}=\E\left [\frac{\sum_{i\in I_n}\beta_i}{\sum_{i=1}^p\beta_i}\right ]=\E\left [\frac{\sum_{i=1}^n\beta_i}{\sum_{i=1}^p\beta_i}\right ],
\]
and we obtain 
\begin{equation}\label{bikc1}
u_{I_n,p}=\frac{n}{p}.
\end{equation}
 From equations (\ref{bick1}) and (\ref{bick2}), the expectation of the angle $\alpha$ between $\hat{F}_{10}$ and $F_{10}$ in $L_2(\gamma_C)$ (defined by \ref{alpha}) is
\begin{align*}
\E[|\alpha|]&=\E\left [\arccos\left ( \frac{\sum_{i=1}^n\frac{\beta_i}{\lambda_i}}{\sum_{i=1}^p\beta_i\sum_{i=1}^n\frac{\beta_i}{\lambda_i^2}}\right ) \right ]\text{ (definition of }\alpha)\\
&\geq \E\left [\arccos\left ( \frac{\sum_{i=1}^n\beta_i}{\sum_{i=1}^p\beta_i}\right ) \right ]\\
&\text{ ( Cauchy-Schwartz inequality and function arccos is decreasing)}\\
&\geq \arccos\left ( \E\left [\frac{\sum_{i=1}^n\beta_i}{\sum_{i=1}^p\beta_i}\right ]\right )\\
& \text{ ( Jensen inequality and concavity of arccos on }[0,1])\\
&\geq \arccos \left (\sqrt{\frac{n}{p}}\right ) \text{ (from (\ref{bikc1})).}
\end{align*}
This and inequality (\ref{lower1}) lead to the desired result. 
\end{proof}
\subsubsection{One should not use a simple linear estimate to get $\hat{F}_{10}$.}
 \begin{proposition}\label{properrlin}
 Suppose that $C$ is a positive definite matrix, and that we are given $X_1,\dots,X_n$ drawn independently from a gaussian Probability distribution $P$ with mean $m_{10}$ and covariance $C$ on $\R^p$. Let $\bar{m}_{10}$ be the associated empirical mean. Let us take $\hat{F}_{10}=C^{-1}\bar{m}_{10}$ and $\hat{s}_{10}=s_{10}$. Then, the classification rule $\1_{\hat{V}}$ defined by (\ref{e:baye}) leads to
  \[\E_{P^{\otimes n}}[\mathcal{R}(\1_{\hat{V}})]\geq \frac{\arccos \left (\frac{1}{\sqrt{p-3}}(\sqrt{n}\|F_{10}\|_{L_2(\gamma_C)}+1)\right )}{2\pi}e^{-\frac{\|F_{10}\|^2_{L_2(\gamma_C)}}{8}}.\] 
 \end{proposition}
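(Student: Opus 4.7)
The plan is to imitate the strategy of Proposition \ref{incons}: combine the geometric lower bound (\ref{lower1}) with a direct computation of the angle $\alpha$ between $\hat F_{10}$ and $F_{10}$ in $L_2(\gamma_C)$ under the sampling distribution. Setting $r := \|F_{10}\|_{L_2(\gamma_C)}$, it suffices by (\ref{lower1}) to prove $\E_{P^{\otimes n}}[|\alpha|] \geq \arccos\!\bigl((\sqrt{n}\,r + 1)/\sqrt{p-3}\bigr)$, since
\[
\E_{P^{\otimes n}}[\mathcal{R}(\1_{\hat V})] \geq \frac{\E_{P^{\otimes n}}[|\alpha|]}{2\pi}\, e^{-r^2/8}.
\]

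Because $\bar m_{10} \sim \gamma_{C/n, m_{10}}$, I would set $u := C^{-1/2} m_{10}$ (so $\|u\|_{\R^p} = r$) and write $C^{-1/2}\bar m_{10} = u + Z/\sqrt{n}$ with $Z \sim \gamma_p$. The map $F \mapsto C^{1/2}F$ is an isometry from $L_2(\gamma_C)$ onto $(\R^p,\|\cdot\|_{\R^p})$ sending $F_{10}$ to $u$ and $\hat F_{10}=C^{-1}\bar m_{10}$ to $u + Z/\sqrt{n}$, so $\alpha$ is the Euclidean angle in $\R^p$ between these two vectors. Decomposing $Z = (A/\|u\|)\,u + Z^\perp$ with $A := \langle u, Z\rangle/\|u\| \sim \mathcal{N}(0,1)$ and $\|Z^\perp\|^2 \sim \chi^2_{p-1}$ independent of $A$, a short calculation gives
\[
\cos\alpha = \frac{\sqrt{n}\,r + A}{\sqrt{(\sqrt{n}\,r + A)^2 + \|Z^\perp\|^2}}.
\]
Writing $B := \sqrt{n}\,r + A$ and $W^2 := \|Z^\perp\|^2$, the crude inequality $\cos^2\alpha \leq B^2/W^2$, independence of $B$ and $W$, and the classical identity $\E[1/\chi^2_{p-1}] = 1/(p-3)$ (valid for $p > 3$) yield
\[
\E[\cos^2\alpha] \leq \E[B^2]\,\E[1/W^2] = \frac{nr^2+1}{p-3}.
\]
Cauchy--Schwarz then produces $\E[|\cos\alpha|] \leq \sqrt{(nr^2+1)/(p-3)} \leq (\sqrt{n}\,r + 1)/\sqrt{p-3}$, the last step using $nr^2+1 \leq (\sqrt{n}\,r + 1)^2$.

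To finish, I would mimic the concluding step in the proof of Proposition \ref{incons}: using the monotonicity of $\arccos$ together with the same Jensen-type argument on $[0,1]$, one obtains $\E[|\alpha|] = \E[\arccos(|\cos\alpha|)] \geq \arccos\!\bigl((\sqrt{n}\,r + 1)/\sqrt{p-3}\bigr)$, which combined with the first display yields the announced inequality. The main obstacle is precisely this last step: $\arccos$ is concave on $[0,1]$, so Jensen's inequality does not naively give the desired lower bound, and the same interpretation as in Proposition \ref{incons} is required. A safe substitute is the chord inequality $\arccos(x) \geq \tfrac{\pi}{2}(1-x)$ for $x \in [0,1]$ (a direct consequence of concavity) combined with the moment estimate above, possibly losing a multiplicative constant with respect to the stated form. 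Apart from this subtlety, every remaining ingredient is an elementary moment computation for gaussian and inverse chi-square variables.
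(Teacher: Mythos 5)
Your strategy is the same as the paper's: apply the geometric lower bound (\ref{lower1}) (legitimate here since $d_0=0$) and reduce everything to an upper bound on $\E[|\cos\alpha|]$. Your computation of that moment bound takes a different and in fact cleaner route: the paper splits $\langle F_{10},\hat F_{10}\rangle$ into $\|F_{10}\|^2$ plus a cross term and applies Cauchy--Schwarz twice, whereas you pass to the Euclidean picture, decompose the noise along and orthogonally to $u=C^{1/2}F_{10}$, and use the exact representation $\cos\alpha=B/\sqrt{B^2+W^2}$ with $B$ and $W^2$ independent. Both computations land on the same bound $(\sqrt{n}\,r+1)/\sqrt{p-3}$, and yours makes the independence structure and the role of the $\chi^2_{p-1}$ component transparent.

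The obstacle you flag at the end is genuine, and you should know that the paper does not resolve it: its proof (like that of Proposition \ref{incons}) asserts $\E[\arccos(X)]\geq\arccos(\E[X])$ ``by Jensen and concavity of $\arccos$ on $[0,1]$'', but for a concave function Jensen gives the reverse inequality $\E[\arccos(X)]\leq\arccos(\E[X])$, and the claimed direction is false in general (take $X\in\{0,1\}$ with $\E[X]=1/2$: then $\E[\arccos X]=\pi/4<\pi/3=\arccos(1/2)$). So the constant $\arccos\bigl((\sqrt{n}\,r+1)/\sqrt{p-3}\bigr)$ in the displayed statement is not actually justified by either argument. Your chord inequality $\arccos(x)\geq\frac{\pi}{2}(1-x)$ on $[0,1]$ is the correct repair; it yields
\[
\E_{P^{\otimes n}}[\mathcal{R}(\1_{\hat V})]\geq \frac{1}{4}\left(1-\frac{\sqrt{n}\,\|F_{10}\|_{L_2(\gamma_C)}+1}{\sqrt{p-3}}\right)e^{-\frac{\|F_{10}\|^2_{L_2(\gamma_C)}}{8}},
\]
which is weaker than the stated bound (since $\arccos(c)\geq\frac{\pi}{2}(1-c)$ on $[0,1]$) but carries exactly the same message: the learning error is bounded away from zero unless $n\|F_{10}\|^2_{L_2(\gamma_C)}$ is of order $p$. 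In short, your proof is sound and proves a marginally weaker constant; the one step you could not justify is precisely the step the paper gets wrong.
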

 Before we give a proof, we comment this result briefly.\\
\indent {\bf Comment}. Suppose there exists $0<r<R$ such that $R>\|F_{10}\|^2_{L_2(\gamma_C)}\geq r$. From the preceding proposition, uniformly on all the possible values of $\mu_1$ and $\mu_0$, the learning error and the excess risk can converge to zero only if  $\frac{n}{p}$ tends to $0$. Recall that if no a priori assumption is done on $m_{10}$, $\bar{m}_{10}$ is the best estimator  (according to the mean square error) of $m_{10}$. Also, as in the estimation of a high dimensional vector problem (such as those described in (\cite{Candes:2006lk})), one should make a more restrictive hypothesis on $m_{10}$. We will suppose, in Section \ref{application}, that if $(a_k)_{k\geq 0}$ are the coefficients of $C^{-1/2}m_{10}$ in a well chosen basis, then  $\sum_{k\geq 0} a_k^q\leq R^q$  for $0<q<2$. 
\begin{proof}
As in the preceding proposition, we will use inequality (\ref{lower1}). Also it is sufficient to show the following 
\[\E\left [ |\alpha|\right ]\geq \arccos \left (\frac{1}{\sqrt{p-3}}(\sqrt{n}\|F_{10}\|_{L_2(\gamma_C)}+1)\right )\]
where $\alpha$ is defined by (\ref{alpha}). Because the function $\arccos$ is decreasing and concave on $[0,1]$, it suffices to obtain 
\begin{equation}\label{aprouver}
\E\left [\frac{|\langle F_{10},\hat{F}_{10}\rangle_{L_2(\gamma_C)}|}{\|F_{10}\|_{L_2(\gamma_C)}\|\hat{F}_{10}\|_{L_2(\gamma_C)}}\right ]\leq \frac{1}{\sqrt{p-3}}(\sqrt{n}\|F_{10}\|_{L_2(\gamma_C)}+1).
\end{equation} 
On the other hand, 
\[
\E\left [\frac{|\langle F_{10},\hat{F}_{10}\rangle_{L_2(\gamma_C)}|}{\|F_{10}\|_{L_2(\gamma_C)}\|\hat{F}_{10}\|_{L_2(\gamma_C)}}\right ] \leq \E\left [\frac{\|F_{10}\|_{L_2(\gamma_C)}}{\|\hat{F}_{10}\|_{L_2(\gamma_C)}}\right ]+\E\left [\frac{|\langle F_{10},\hat{F}_{10}-F_{10}\rangle_{L_2(\gamma_C)}|}{\|F_{10}\|_{L_2(\gamma_C)}\|\hat{F}_{10}\|_{L_2(\gamma_C)}}\right ]
\]
\[
\hspace{1cm}\leq \E\left [\frac{\|F_{10}\|_{L_2(\gamma_C)}^2}{\|\hat{F}_{10}\|_{L_2(\gamma_C)}^2}\right ]^{1/2}\left (1+\E\left [\frac{\langle F_{10},\hat{F}_{10}-F_{10}\rangle_{L_2(\gamma_C)}^2}{\|F_{10}\|_{L_2(\gamma_C)}^2}\right ]^{1/2}\right ),
\]
where this last inequality results from Cauchy-Schwartz.
Recall that  
\[\hat{F}_{10}=F_{10}+\frac{C^{-1/2}}{\sqrt{n}}\xi,\]
where $\xi$ is a standardised gaussian random vector of $\R^p$. Also, we easily obtain,
\[\E\left [\frac{\langle F_{10},\hat{F}_{10}-F_{10}\rangle_{L_2(\gamma_C)}^2}{\|F_{10}\|_{L_2(\gamma_C)}^2}\right ]^{1/2}=\frac{1}{\sqrt{n}},\]
 and 
\[\frac{\|F_{10}\|_{L_2(\gamma_C)}^2}{\|\hat{F}_{10}\|_{L_2(\gamma_C)}^2}=\frac{\|\sqrt{n}C^{1/2}F_{10}\|^2_{\R^p}}{\|\sqrt{n}C^{1/2}F_{10}+\xi\|^2_{\R^p}}.\]
The rest of the proof follows from the following simple fact which is a consequence of the Cochran Theorem and a classical calculation on $\chi^2$ random variables:\\
\indent Let $\sigma>0$, $\beta\in \R^p$, $X$ a gaussian random vector of $\R^p$ with mean $\beta$ and covariance $I_p$. Then 
\[\E\left [\frac{1}{\|X\|_{\R^p}^2}\right ]\leq \frac{1}{p-3}.\] 
\end{proof}
\subsection{Case where $\|F_{10}\|_{L_2(\gamma_C)}$ diverges: well separated data.}
We shall now rapidly consider the case when the data are well separated: the case where $\|F_{10}\|_{L_2(\gamma_C)}$ diverges. In the next theorem, we assume that $p$ tends to infinity.   
\begin{theoreme}\label{the:11}
Suppose that $0< \alpha<\pi/2$  ($\alpha$ is defined by (\ref{alpha})), and that\\  $\cos(\alpha)\|F_{10}\|_{L_2(\gamma_C)}\rightarrow \infty$ when $p$ tends to infinity. We then have 
\[
 \mathcal{R}\rightarrow \left \{\begin{array}{cc}0 & \text{ si }\liminf_{p\rightarrow \infty} \frac{2|d_0|}{|\langle F_{10} ,\hat{F}_{10} \rangle_{L_2(\gamma_C)}|}<1\\ b\geq \frac{1}{8}  & \text{ si }\limsup_{p\rightarrow \infty} \frac{2|d_0|}{|\langle F_{10} ,\hat{F}_{10} \rangle_{L_2(\gamma_C)}|}> 1\end{array}\right. \text{ when }p \rightarrow \infty.
\]

\end{theoreme}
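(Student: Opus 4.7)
The plan is to replace the high-dimensional indicator sets by their image under a single bivariate Gaussian, exploiting the affine structure of both $\mathcal{L}_{10}$ and $\hat{\mathcal{L}}_{10}$. Write $Z=X-s_{10}$, $r=\|F_{10}\|_{L_2(\gamma_C)}$, $\hat r=\|\hat F_{10}\|_{L_2(\gamma_C)}$, and $\rho=\langle F_{10},\hat F_{10}\rangle_{L_2(\gamma_C)}$; the hypothesis $0<\alpha<\pi/2$ forces $\rho=r\hat r\cos\alpha>0$. Set
\[ U=\frac{\langle F_{10},Z\rangle}{r}, \qquad \hat U=\frac{\langle \hat F_{10},Z\rangle+d_0}{\hat r}. \]
Under $P_k$ ($k=0,1$), the pair $(U,\hat U)$ is jointly Gaussian with unit marginal variances, correlation $\cos\alpha$, and means $\E_{P_k}[U]=(2k-1)r/2$ and $\E_{P_k}[\hat U]=(d_0+(2k-1)\rho/2)/\hat r$. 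Since $V=\{U\geq 0\}$ and $\hat V=\{\hat U\geq 0\}$,
\[ 2\mathcal{R}(\1_{\hat V})=P_1(U\geq 0,\hat U<0)+P_0(U<0,\hat U\geq 0). \]

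\textbf{First case ($\mathcal{R}\to 0$).} Suppose $2|d_0|\leq (1-\eta)\rho$ for some fixed $\eta>0$ (along the values of $p$ of interest). Then $\E_{P_0}[\hat U]=(d_0-\rho/2)/\hat r\leq -\eta\rho/(2\hat r)=-\eta r\cos\alpha/2$ and, symmetrically, $\E_{P_1}[\hat U]\geq \eta r\cos\alpha/2$. Bounding each joint probability by its $\hat U$-marginal,
\[ 2\mathcal{R}\leq P_0(\hat U\geq 0)+P_1(\hat U<0)\leq 2\Phi\!\left(-\frac{\eta r\cos\alpha}{2}\right)\longrightarrow 0, \]
since $r\cos\alpha\to\infty$ by hypothesis.

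\textbf{Second case ($\mathcal{R}$ bounded below).} Suppose $2|d_0|\geq (1+\eta)\rho$; assume $d_0>0$, the opposite sign being obtained by exchanging the roles of $P_0$ and $P_1$. Then $\E_{P_0}[\hat U]=(d_0-\rho/2)/\hat r\geq \eta r\cos\alpha/2\to+\infty$, so $P_0(\hat V)\to 1$. Combined with $P_0(V)=\Phi(-r/2)\to 0$ (valid because $r\geq r\cos\alpha\to\infty$), this yields $P_0(\hat V\setminus V)\to 1$ and hence
\[ \mathcal{R}\geq \tfrac{1}{2}P_0(\hat V\setminus V)\longrightarrow \tfrac{1}{2}, \]
which is well above $1/8$.

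\textbf{Main obstacle.} The joint correlation $\cos\alpha$ itself plays no role beyond guaranteeing $r\cos\alpha\to\infty$: in either regime one of the two univariate events that make up $\mathcal{R}$ has Gaussian-tail probability tending to $0$ or to $1$, so crude marginal bounds suffice. The only genuine bookkeeping subtlety is tracking the sign of $d_0$ (the sign of $\rho$ being fixed by $\alpha<\pi/2$) and invoking the $P_0\leftrightarrow P_1$ symmetry to collapse the two sub-cases in Case~2 into one.
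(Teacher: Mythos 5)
Your proof is correct, and it takes a genuinely different route from the paper's. The paper derives Theorem \ref{the:11} from the geometric Lemma \ref{lemprinc}: the learning error is written as the $\gamma_2$-measure of two planar regions, the plane is partitioned into $Q_-^a$, $Q_-^b$, $Q_+$ and a tunable buffer zone $Q_\epsilon$, and the liminf/limsup hypotheses on $2|d_0|/\langle F_{10},\hat F_{10}\rangle$ are used to force the translated centres $y^\pm$ into $Q_+$ (yielding the exponential upper bound (\ref{l01}) with the $e^{-\epsilon^2\|F_{10}\|^2\cos^2\alpha/(2(1+\epsilon)^2)}$ factor) or into $Q_-$ (yielding the lower bound (\ref{l00}), whose terms sum to at least $1/8$ in the limit). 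You instead project onto the pair of linear statistics $(U,\hat U)$, and then discard the joint (correlation) structure altogether: in each regime one of the two marginal means $\E_{P_k}[\hat U]=(d_0\pm\rho/2)/\hat r$ is pushed to $\pm\infty$ at rate $\eta r\cos\alpha/2$, so crude one-dimensional Gaussian tail bounds suffice. Your computations of the means, variances and of $\rho=r\hat r\cos\alpha>0$ are all correct, and the $P_0\leftrightarrow P_1$ symmetry handles the sign of $d_0$. What each approach buys: the paper's lemma is non-asymptotic, two-sided, and is reused to prove Theorems \ref{the:1} and \ref{the:32}, so the asymptotic statement comes almost for free once that machinery exists; your argument is self-contained, considerably shorter, and in the second case actually delivers the sharp limit $\mathcal{R}\to 1/2$ (since $P_0(\hat V\setminus V)\to 1$), which is stronger than the paper's $b\geq 1/8$. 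The only caveat, which affects the theorem statement more than your proof, is that under a mere $\liminf<1$ (resp. $\limsup>1$) hypothesis the convergence can only be asserted along the subsequence where the ratio stays below $1-\eta$ (resp. above $1+\eta$); your phrase ``along the values of $p$ of interest'' handles this as well as the paper does.
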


This theorem is proved in Section \ref{proofth1}.
In the case of well separated data it is obvious that the optimal rule will perform perfectly. Theorem \ref{the:11} shows that for a given estimator $\hat{F}_{10}$ one should check that the probability to have $\liminf_{p\rightarrow \infty} \frac{2|d_0|}{|\langle F_{10} ,\hat{F}_{10} \rangle_{L_2(\gamma_C)}|}>1$ is small enough. 

 %%%%%%%%%%%%%%%%%%%
\section{Quadratic perturbation of quadratic rule}\label{quadrat}
\subsection{Main results and remarks about the infinite dimensional setting}
In the case where $C_1\neq C_0$,  $\mathcal{L}_{10}(x)=\mathcal{L}^Q_{10}(x)$ is a polynomial function of degree two on $\R^p$:
\begin{equation}\label{eque:RN2}
\mathcal{L}^Q_{10}(x)=-\frac{1}{2}\langle A_{10}(x-s_{10}),x-s_{10}\rangle_{\R^p}+\langle G_{10},x-s_{10}\rangle_{\R^p}-c,
\end{equation}
where 
\begin{equation}\label{fact1}
A_{10}=C_1^{-1}-C_0^{-1},\;\; G_{10}=Sm_{10},\;\; 
\end{equation}
\[ S=\frac{C_0^{-1}+C_1^{-1}}{2},\;\;c=\frac{1}{8}\langle A m_{10},m_{10}\rangle_{\R^p}-\frac{1}{2}\log|\det(C_0^{-1}C_1)|,\]
 $m_{10}$ and $s_{10}$ are defined by (\ref{eque:RN}). 
\begin{remarque}
The equation (\ref{fact1}) giving $\mathcal{L}^Q_{10}(x)$ can be modified using the fact that 
\begin{equation}\label{eque:RN3}
 A_{10}=\frac{1}{2}\left ( C_1^{-1/2}W_{10}C_1^{-1/2}-C_0^{-1/2}W_{01}C_0^{-1/2}\right )\text{ where }W_{ij}=I-C_i^{1/2}C_j^{-1}C_i^{1/2}.
 \end{equation} 
 This modification has two advantages. It involves $W_{ij}$ which play an important role in the infinite dimensional framework (see remark \ref{infdimrem}).  On the other hand, it involves $W_{10}$ as much as $W_{01}$ which can lead in practice (while estimating $A_{10}$) to a symmetric procedure that does not give more importance to any group.  
 \end{remarque}
 In the classification problem, a polynomial of degree two $\widehat{\mathcal{L}}^Q_{10}(x)$ is used as a substitute for $\mathcal{L}_{10}$.  We decide that $X$ comes from class one if it belongs to 
\begin{equation}\label{hatfdev}
\hat{V}=\left \{x\in \R^p \text{ tq } \widehat{\mathcal{L}}^Q_{10}(x)\geq 0\right \},
\end{equation}
 The following theorem gives our solution to Problem \ref{Pb1}. 

\begin{theoreme}\label{the:2}
Let $\gamma$ be a gaussian measure on $\R^p$. Suppose that $\mathcal{L}_{10}^Q$ is a polynomial of degree two on $\R^p$ and that we have $\|\mathcal{L}_{10}^Q\|_{L_2(\gamma)}\geq r$ for $r>0$. Then, for all $q\in ]0,1[$, there exists $c_1(r,q)>0$ such that
\begin{equation}\label{rgiy}
\mathcal{R}(\1_{\hat{V}})\leq c_1(r,q)\|\mathcal{L}^Q_{10}-\widehat{\mathcal{L}}^Q_{10}\|^{q/3}_{L_2(\gamma)},
\end{equation}  
where $\hat{V}$ is given by (\ref{hatfdev}) and $\mathcal{R}$ by (\ref{errapprr}).
\end{theoreme}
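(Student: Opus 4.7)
The approach is a level-set decomposition of the symmetric difference $V\Delta\hat{V}$ combined with an anti-concentration estimate for degree-two polynomials of a Gaussian vector.

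The starting point is the geometric observation that if $x\in V\Delta\hat{V}$ then $\mathcal{L}^Q_{10}(x)$ and $\widehat{\mathcal{L}}^Q_{10}(x)$ have opposite signs, which forces $|\mathcal{L}^Q_{10}(x)|\le|\mathcal{L}^Q_{10}(x)-\widehat{\mathcal{L}}^Q_{10}(x)|$. For any threshold $t>0$ this yields the inclusion
\[V\Delta\hat{V}\;\subset\;\{|\mathcal{L}^Q_{10}|\le t\}\;\cup\;\{|\mathcal{L}^Q_{10}-\widehat{\mathcal{L}}^Q_{10}|\ge t\},\]
so it is enough to bound the $P_0$- and $P_1$-measures of the two pieces on the right and then use the definition (\ref{errapprr}) of $\mathcal{R}$.

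I would first bound the $\gamma$-measure of each piece. For the first piece I would invoke a Carbery--Wright type anti-concentration: any polynomial $Q$ of degree two in a Gaussian vector satisfies $\gamma(|Q|\le t)\le\kappa(t/\|Q\|_{L_2(\gamma)})^{1/2}$, which combined with $\|\mathcal{L}^Q_{10}\|_{L_2(\gamma)}\ge r$ gives $\gamma(|\mathcal{L}^Q_{10}|\le t)\le\kappa(t/r)^{1/2}$. For the second piece, Chebyshev's inequality gives $\gamma(|\mathcal{L}^Q_{10}-\widehat{\mathcal{L}}^Q_{10}|\ge t)\le\varepsilon^2/t^2$ with $\varepsilon=\|\mathcal{L}^Q_{10}-\widehat{\mathcal{L}}^Q_{10}\|_{L_2(\gamma)}$. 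Balancing the two bounds by taking $t$ of order $\varepsilon^{4/5}r^{1/5}$ yields $\gamma(V\Delta\hat{V})\le C(r)\,\varepsilon^{2/5}$.

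Next I would transfer the $\gamma$-measure bound to $P_0$ and $P_1$. Since all three are equivalent Gaussians on $\R^p$, each density $dP_i/d\gamma$ is the exponential of a quadratic form; provided $\gamma$ is chosen with covariance sufficiently dominating those of $P_0$ and $P_1$, this density lies in $L_{1/(1-q)}(\gamma)$ for every $q\in(0,1)$. Hölder's inequality gives
\[P_i(V\Delta\hat{V})\;\le\;\bigl\|dP_i/d\gamma\bigr\|_{L_{1/(1-q)}(\gamma)}\;\gamma(V\Delta\hat{V})^{q},\]
and the first factor depends only on $r$, $q$, and the covariances of $P_0,P_1$, so it is absorbed into $c_1(r,q)$. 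Combining with the $\gamma$-measure bound gives a power of $\varepsilon$ of order $2q/5$, which is larger than the target $q/3$ for every $q\in(0,1)$, and the theorem follows.

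The main obstacle is the anti-concentration step: one needs a uniform inequality $\gamma(|Q|\le t)\lesssim(t/\|Q\|_{L_2(\gamma)})^{1/2}$ valid for every degree-two polynomial $Q$, which in a self-contained proof amounts to diagonalising $Q$, separating the genuinely quadratic part from a linear remainder, and then applying a one-dimensional anti-concentration argument coordinate by coordinate. A secondary subtlety is the choice of the reference Gaussian $\gamma$, since the $L_{1/(1-q)}(\gamma)$-integrability of $dP_i/d\gamma$ dictates how large $q$ may be taken before $c_1(r,q)$ blows up; this is what prevents the exponent from reaching the endpoint value $1/3$ corresponding to $q=1$.
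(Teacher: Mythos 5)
Your decomposition is sound and, in outline, parallels the paper's own strategy: the paper also reduces the problem to bounding the measure of $V_f\Delta V_{f+\delta}$ by splitting according to whether the perturbation is large or the frontier function is close to zero (Lemma \ref{fonda}), and then verifies a tail hypothesis for $\delta$ and a small-ball hypothesis $P(|f|\le\epsilon)\le c_2\epsilon^{\beta}$ for $f$. The differences lie in the two ingredients. Where you use a single threshold $t$ and Chebyshev for the tail of $\delta$, the paper performs a layered H\"older decomposition over the level sets of $|\delta-\E[\delta]|$ and therefore needs the sharper Gaussian-chaos concentration of Theorem \ref{proplaurent}; this is where its exponent $q\beta$ with the loss $q<1$ comes from. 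Where you invoke a Carbery--Wright anti-concentration bound with exponent $1/2$, the paper proves its own small-ball estimate (Theorem \ref{th:formquadra}) with the weaker exponent $1/3$, via characteristic functions and an Esseen smoothing inequality (Lemma \ref{lemme:teplo}). If you may cite Carbery--Wright, your argument is shorter and yields a better exponent; but your sketched self-contained proof of that step is where the real difficulty hides. A coordinate-by-coordinate argument fails in exactly the regime the paper singles out as hard (see the remark after Theorem \ref{th:formquadra}): when every eigenvalue of the quadratic part is small while their $\ell^2$ norm stays bounded below, no single coordinate carries enough anti-concentration, and one is forced into a central-limit-type estimate. So either cite the external theorem explicitly or expect this step to be the bulk of the work.

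The second point to repair is the change of measure. Your H\"older step puts $\|dP_i/d\gamma\|_{L_{1/(1-q)}(\gamma)}$ into the constant; this is finite only when the covariance of $\gamma$ dominates $q$ times those of $P_0$ and $P_1$, and it is not a function of $r$ and $q$ alone: for Gaussian measures it involves a product over eigenvalues of operators of the form $C_0^{-1/2}C_1C_0^{-1/2}$ and in general grows with the dimension. That undercuts the property the paper insists on, namely that $c_1(r,q)$ is dimension-free. The fix is simple and improves your proof: since $\mathcal{R}\le\frac{1}{2}\left(P_1(V\Delta\hat V)+P_0(V\Delta\hat V)\right)$, and since both the anti-concentration inequality (valid for any Gaussian, indeed any log-concave, measure) and Chebyshev can be applied directly under $P_0$ and under $P_1$, you should run the single-threshold argument separately under each $P_i$ with the hypothesis $\|\mathcal{L}^Q_{10}\|_{L_2(P_i)}\ge r$. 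This removes the auxiliary $\gamma$, removes $q$ entirely, and yields $\mathcal{R}\le c(r)\,\|\mathcal{L}^Q_{10}-\widehat{\mathcal{L}}^Q_{10}\|_{L_2}^{2/5}$ with a constant depending only on $r$, which is strictly stronger than the stated bound.
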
 

We emphasise the fact that $c_1(r,q)$ depends only $r$ and $q$. In particular it does not depend on the dimension $p$ of the problem. The proof of this Theorem is given in Section \ref{proofth2}. It is implicitly infinite dimensional, and the preceding theorem could have been stated in an infinite dimensional framework. We do not want to introduce this complicated framework and we refer to \cite{Bogachev:1998fk} for an introduction to the subject. The infinite dimensional framework highlights a particular aspect of the problem that is contained in the following remark.
\begin{remarque}\label{infdimrem}[infinite dimensional framework]
When $\X$ is a separable Hilbert space (it can also be a separable Banach space in the case of LDA) two gaussian measures $\gamma_{C_1,\mu_1}$ and $\gamma_{C_0,\mu_0}$ that are not equivalent are orthogonal.\\
\indent  If these measures are orthogonal then the observed data from the two classes are perfectly separated and $\mathcal{C}(g^*)=0$. In this case one can hope to obtain $\mathcal{C}(g)=0$ for a reasonable classification rule $g$ (Even if it is not trivial, see Theorem \ref{the:11} in the linear case).\\
\indent A necessary and sufficient condition for these measures to be equivalent is that
\begin{equation}\label{eque:condeq1}
m_{10}=\mu_1-\mu_0\in H(\gamma_{C_1,\mu_1})=H(\gamma_{C_0,\mu_0}),
\end{equation}
and
\begin{equation}\label{eque:condeq2}
W_{10}=I-C_1^{1/2}C_0^{-1}C_1^{1/2}\in HS(\X), \;\; 
\end{equation}
where $H(\gamma)$ is the reproducing Kernel Hilbert Space associated with a gaussian measure $\gamma$ and $HS(\X)$ is the space of Hilbert Shmidt operators with values in $\X$ (see corollaries p293 in \cite{Bogachev:1998fk}). In particular, the eigenvalues   of $W_{10}$ are in $l^2$. In the case where they are equivalent, one can define $\mathcal{L}_{10}$ as a limit (almost surely and $L_2$) of its finite dimensional counterpart. This can also be understand as measurable and squared integrable (with respect to $\gamma_{C_1,\mu_1}$) polynomials of degree two in $\X$ (see Chapter $5.10$ in \cite{Bogachev:1998fk}).  
\end{remarque}

\subsection{Comment and Corollary}. Suppose $\widehat{\mathcal{L}}^Q_{10}(x)$ is defined substituting $\hat{G}_{10}$, $\hat{s}_{10}$ $\hat{A}_{10}$ and $\hat{c}$ to $G_{10}$, $s_{10}$ $A_{10}$ and $c$ in (\ref{eque:RN2}). If we note 
\begin{equation}\label{delta0}
\delta_0=\hat{c}-c+\left \langle \hat{G}_{10}+(\hat{A}_{10}^*+\hat{A}_{10})(\hat{s}_{10}-s_{10}), \hat{s}_{10}-s_{10} \right \rangle_{\R^p},
\end{equation}
($A^*$ is the transpose of a matrix $A$)
\begin{equation}\label{deltaL}
\delta^L=\hat{G}_{10}-G_{10}+(\hat{A}_{10}^*+\hat{A}_{10})(\hat{s}_{10}-s_{10})
\end{equation}
and
\begin{equation}\label{deltaQ}
\delta^Q=\hat{A}_{10}-A_{10},
\end{equation} 
we then get, by straightforward calculation: 
\begin{equation}\label{perturQ}
\forall x\in \R^p\;\; \widehat{\mathcal{L}}^Q_{10}(x)=\mathcal{L}_{10}^Q(x)+\delta_0+\langle \delta^L,x-s_{10}\rangle_{\R^p} -\frac{1}{2}\langle \delta^Q(x-s_{10}),x-s_{10}\rangle_{\R^p}.
\end{equation}
Also, are result are about quadratic perturbations of quadratic rules.\\

The following corollary of Theorem \ref{the:2} is easier to use.
\begin{corollaire}\label{coroe:1}
Let $\X=\R^p$ and $C$ be a symmetric positive definite matrix on $\R^p$. Suppose that there exists $r>0$ such that $\|\mathcal{L}_{10}\|^2_{L_2(\gamma_{C,s_{10}})}>r$. Then, for $\1_{\hat{V}}$ given by (\ref{hatfdev}) and for all $0<q<1$ there exists $c_1(r,q)>0$ such that:
\[
 \mathcal{R}(\1_{\hat{V}})\leq c_1(r,q) \left (\frac{1}{2}\|C(A_{10}-\hat{A}_{10})\|^2_{HS(\R^p)}+\|C^{1/2}\delta^L\|_{\R^p}^2\right .\]
 \[ \hspace{2cm}\left .+2\delta_0^2+\frac{1}{2}trace^2(C(A_{10}-\hat{A}_{10})) \right )^{q/3},
 \]
where $\delta^L$ is given by (\ref{deltaL}) and $\delta_0$ by (\ref{delta0}).
\end{corollaire}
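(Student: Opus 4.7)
The plan is to deduce the corollary from Theorem \ref{the:2}, specialised to $\gamma=\gamma_{C,s_{10}}$, by producing an explicit upper bound for $\|\mathcal{L}^Q_{10}-\widehat{\mathcal{L}}^Q_{10}\|^2_{L_2(\gamma_{C,s_{10}})}$ in terms of the four quantities on the right-hand side, using elementary Gaussian moment formulas. Setting $Y=X-s_{10}$ (so that $Y\leadsto \gamma_C$), equation (\ref{perturQ}) rewrites the difference as
\[
\mathcal{L}^Q_{10}(X)-\widehat{\mathcal{L}}^Q_{10}(X)=-\delta_0-\langle \delta^L,Y\rangle_{\R^p}+\tfrac{1}{2}\langle \delta^Q Y,Y\rangle_{\R^p},
\]
so everything reduces to computing the second moment of this sum under $\gamma_C$.

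Squaring produces six terms; two of them (the products of the constant with the linear term and of the linear with the quadratic term) are odd polynomials in $Y$ and therefore vanish in expectation. For the remaining four I would use, after replacing $\delta^Q$ by its symmetric part (which leaves the quadratic form unchanged), the standard identities $\E[\langle \delta^L,Y\rangle_{\R^p}^2]=\|C^{1/2}\delta^L\|_{\R^p}^2$, $\E[\langle \delta^Q Y,Y\rangle_{\R^p}]=\trace(C\delta^Q)$, and $\E[\langle \delta^Q Y,Y\rangle_{\R^p}^2]=2\|C^{1/2}\delta^Q C^{1/2}\|_{HS}^2+\trace^2(C\delta^Q)$, the last two obtained by writing $Y=C^{1/2}Z$ with $Z\leadsto \gamma_p$ and invoking the classical mean/variance formulas for a quadratic form in a standard Gaussian vector. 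Completing the square in $\delta_0$ and $\trace(C\delta^Q)$ then yields the exact identity
\[
\|\mathcal{L}^Q_{10}-\widehat{\mathcal{L}}^Q_{10}\|^2_{L_2(\gamma_{C,s_{10}})}=\bigl(\delta_0-\tfrac{1}{2}\trace(C\delta^Q)\bigr)^2+\|C^{1/2}\delta^L\|_{\R^p}^2+\tfrac{1}{2}\|C^{1/2}\delta^Q C^{1/2}\|_{HS}^2.
\]

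Two bookkeeping bounds then finish the job: the elementary inequality $(a-b)^2\leq 2a^2+2b^2$ separates $\delta_0$ from $\trace(C\delta^Q)$ and produces the terms $2\delta_0^2$ and $\tfrac{1}{2}\trace^2(C(A_{10}-\hat{A}_{10}))$, while the HS-norm comparison $\|C^{1/2}AC^{1/2}\|_{HS}\leq \|CA\|_{HS}$ (valid for symmetric $A$ and symmetric $C$) delivers the $\tfrac{1}{2}\|C(A_{10}-\hat{A}_{10})\|_{HS}^2$ term. This last comparison is the only non-routine step and is what I view as the main (still mild) obstacle, since $C$ and $A=A_{10}-\hat{A}_{10}$ need not commute; I would prove it by diagonalising $A=U\Lambda U^T$, setting $D=U^TCU$, and observing
\[
\|CA\|_{HS}^2-\|C^{1/2}AC^{1/2}\|_{HS}^2=\sum_{i,j}(\lambda_i^2-\lambda_i\lambda_j)D_{ij}^2=\tfrac{1}{2}\sum_{i,j}(\lambda_i-\lambda_j)^2D_{ij}^2\geq 0.
\]
Plugging the resulting upper bound for $\|\mathcal{L}^Q_{10}-\widehat{\mathcal{L}}^Q_{10}\|^2_{L_2(\gamma_{C,s_{10}})}$ into Theorem \ref{the:2} (with the exponent and constant relabelled as needed) gives the corollary.
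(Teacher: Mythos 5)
Your proof is correct and follows essentially the same route as the paper: both reduce the corollary to Theorem \ref{the:2} applied with $\gamma=\gamma_{C,s_{10}}$ by bounding $\|\mathcal{L}^Q_{10}-\widehat{\mathcal{L}}^Q_{10}\|^2_{L_2(\gamma_{C,s_{10}})}$, and your direct moment computation yields exactly the identity (cross terms vanishing except the constant--quadratic one, hence the completed square $(\delta_0-\tfrac12\trace(C\delta^Q))^2$) that the paper obtains via the orthogonality of the Gaussian chaos components, followed by the same $(a-b)^2\leq 2a^2+2b^2$ step. The one point where you go beyond the paper is the explicit verification of $\|C^{1/2}\delta^QC^{1/2}\|_{HS}\leq\|C\delta^Q\|_{HS}$, which the paper leaves implicit (its final display stops at $\|C^{1/2}\delta^QC^{1/2}\|^2_{HS}$ while the corollary's statement carries $\|C(A_{10}-\hat{A}_{10})\|^2_{HS}$); your eigenbasis argument for that comparison is correct and welcome.
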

\begin{proof}
Let us recall that $\delta^Q$ is given by (\ref{deltaQ}). We have
\[
\|\mathcal{L}_{10}-\widehat{\mathcal{L}}_{10}\|^2_{L_2(\gamma_{C,s_{10}})}
\]
\begin{align*}
&=\|\frac{1}{2}(\delta^Q(x)-\E_{\gamma_C}[q_{\delta^Q}(X)])-\langle \delta^L,x \rangle_{\R^p}-(\delta_0-\frac{1}{2}\E_{\gamma_C}[q_{\delta^Q}(X)])\|^2_{L_2(\gamma_C)}\\
&\leq \frac{1}{4}Var(q_{C^{1/2}\delta^QC^{1/2}}(\xi))+Var(\langle C^{1/2}\delta^L,\xi\rangle_{\R^p})+2 \delta_0^2+2\E_{\gamma_C}^2[q_{C^{1/2}\delta^QC^{1/2}}(\xi)]\\
&(\xi\leadsto\gamma_{I_p,0}, \text{ note that there is equality here})\\
&=\frac{1}{2}\|C^{1/2}\delta^QC^{1/2}\|_{HS(\R^p)}^2+ \|C^{1/2}\delta^L\|_{\R^p}^2+2\delta_0^2+\frac{1}{2}trace^2(C^{1/2}\delta^QC^{1/2}).
\end{align*}
\end{proof}
\subsection{Comparison of this result with those obtained for LDA.}
The preceding theorem and its corollary are less powerful than those obtained for the LDA procedure and some conjectures might be made in a parallel with Theorem \ref{the:32}. In this theorem and in Theorem \ref{the:11}, both concerning linear rules, we explained and quantified how parameter estimation errors are less important when $\|F_{10}\|_{L_2(\gamma_C)}$ is large. This observation was based on the presence of a term exponentially decreasing with $\|F_{10}\|_{L_2(\gamma_C)}$ in the quantities which determine the upper bound to the learning error (and as a consequence the excess risk). In Theorem \ref{the:2} concerning QDA procedure, we did not obtain that type of term. Nevertheless, Remark \ref{infdimrem} (more precisely the relation this leads to equivalence of the measures) allow us to conjecture that such a term exists.  \\
\indent We also have to clarify the hypothesis under which the norm of $\mathcal{L}^Q_{10}$ is lower bounded. Let us recall that this hypothesis guaranties that the constant $c_1$ in equation  (\ref{rgiy}) is independent of the parameters of the problem. In a parallel with the results obtained for the procedure LDA the lower bound that is required for the norm of $\mathcal{L}^Q_{10}$ corresponds to the assumption that the two groups considered can always be distinguished. We believe that even if this hypothesis is natural, it is deeply linked with error measure that is used in our proof: the learning error. Hence, it is obvious that the excess risk is small when the data cannot be distinguished (see Section \ref{errapp} for a fuller discussion) but our result does not reflect this fact.
\begin{figure}
\center
    \includegraphics[width=10cm]{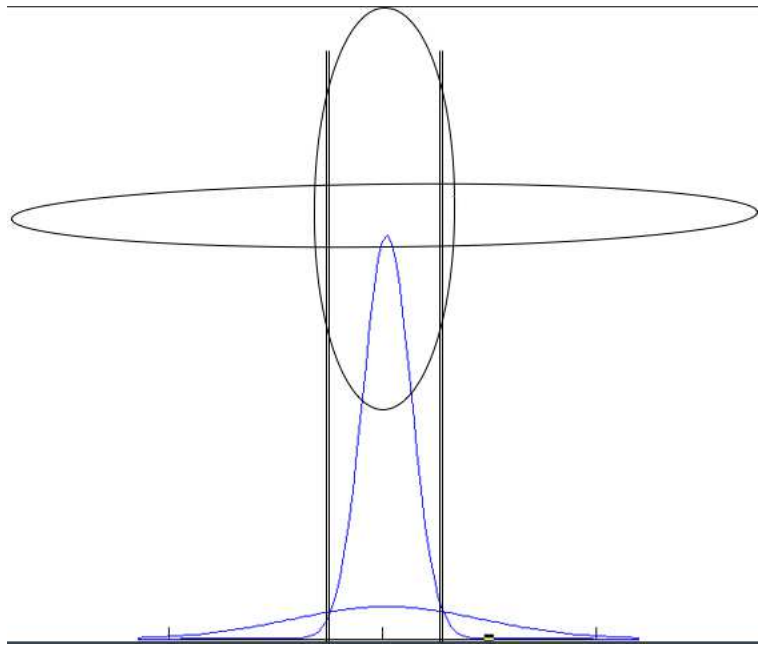}
   \caption{Separation of the data in a direction where the variances are different. The two groups can be identified with their ellipsoids of concentration: a horizontal ellipsoid and a vertical ellipsoid. the two groups have the same mean, but different covariance, which makes the data quite well separated. One can take advantage of this separation only if a quadratic rule is used.}
   \label{fig:graphvariancediff}
\end{figure}

We do not discuss the estimation of $G_{10}$ which leads to the same analysis as that for $F_{10}$ in the case of a linear rule. Let us now discuss the estimation of $W_{10}$ (and $W_{01}$). 
\subsection{Thresholding estimation of an operator and linearisation of a procedure.}
 Recall that $W_{10}$ is a symmetric matrix. Suppose we know an orthonormal base in which it is diagonal. Let $\lambda_{10}=(\lambda_{10i})_{i= 1,\dots,p}$ be the vector of its eigenvalues. To build the estimator $\hat{W}_{10}$ of $W_{10}$, we have to estimate its eigenvalues. It remains to measure the learning error and hence the estimation error of the eigenvalues vector in $l^2$ norm. Suppose that $p$ tends to infinity. We will recall later that if the measure of class $0$ and $1$ tend to equivalent gaussian measure in a separable Hilbert space, then $W_{10}$ tends to be Hilbert-Schmidt. This means that $\lambda_{10}$ stays in $l^2(\N)$. Once again, if $\lambda_{10}$ has coefficients decreasing sufficiently fast, the thresholding estimation should be used. This thresholding estimation is no longer a reduction of the dimension of the space in which the rules acts, but becomes a linearisation of the classification rules -It can be interpreted as a reduction of the dimension of the space in which the used rule lives- Indeed, let $\hat{W}_{10}=\sum_{i=1}^l\hat{\lambda}_{10i}e_i\otimes e_i$ for $l\leq  p$ and $(e_i)_{i=1,\dots,p}$ be  an orthonormal bases of $\R^p$, we have:
 \[\widehat{\mathcal{L}}^Q_{10}=\sum_{i=1}^l\hat{\lambda}_{10i}\langle e_i,x-\hat{s}_{10}\rangle^2_{\R^p}+g(x),\]
 where $g(x)$ is affine and defined on $\R^p$. In this case, the plug-in rule is affine in a subspace of dimension $p-l$ and quadratic in the subspace of dimension $l$ spanned by $(e_{i})_{i=1,\dots,l}$.\\
\indent Let us note that because $W_{10}=I-C_1^{-1/2}C_0C_1^{-1/2}$, setting the eigenvalues of $\hat{W}_{ij}$ to zero in a subspace of $\R^p$, is equivalent to choosing a subspace in which the covariance matrices $C_1$ and $C_0$ are "close enough". In this subspace, one can suppose that $C_1$ equals $C_0$. The classification rule, in this subspace, is linear. Figure \ref{fig:graphvariancediff} illustrates the case where the eigenvalues of $W_{10}$ are big enough and why a quadratic rule is better in that case.\\

\section{Classification procedure in high dimension: a way to solve Problem \ref{Pb2}}\label{procetude}
\subsection{Introduction.}
In this section, we give a practical method of classification for gaussian data in high dimension and hence present our contribution to Problem \ref{Pb2}.  Note that if we only treat the binary classification problem, it is easy to extend our procedure to the case of $K$ classes as we have done in \cite{Girard:2008wd}. Recall that we are given $n_1$ observations from $P_1$ and $n_0$ observations from $P_0$. We will note $n=n_1+n_0$. We suppose that each of the $n_k$ vectors of group $k$ is composed of the $p$ first wavelet coefficient (see \cite{wavetour}) of a random curve  from $\X=L^2[0,1]$ which is a realisation of a gaussian random variable $P_k=\gamma_{C_k,\mu_k}$ of unknown mean and covariance. \\
\indent Recall that a learning rule can be defined by a partition of $\R^p$. We construct this partition  $\hat{V},\R^p\setminus \hat{V}$ of $\R^p$ with the use of a frontier functions $\widehat{\mathcal{L}}_{10}$:
\begin{equation}\label{equ:parition1}
\hat{V}=\left \{x\in \R^p \;:\;\;\;\widehat{\mathcal{L}}_{10}(x)\geq 0\right \},
\end{equation}
which should be given in the sequel.\\
 
We divide here the presentation into two parts. In the first part, we give a theoretical result in the case where the covariance matrices are supposed to be known. In the second part, we give the method that is used when the covariances are unknown.  We keep the notation of the preceding sections. In the case of LDA procedure, $m_{10}=\mu_1-\mu_0$ $F_{10}=C^{-1}m_{10}$, $s_{10}=\frac{\mu_1+\mu_0}{2}$, and in the case of the QDA procedure, $G_{10}=\frac{1}{2}(C_1^{-1}+C_0^{-1})m_{10}$, $A_{10}=C_1^{-1}-C_0^{-1}$. 
\subsection{Case of known and equal covariance: procedure and theoretical result.}
\paragraph{Notation and assumptions.} Let $\bar{\mu}_{k}$ be the empirical mean of the learning data $(X_{ik})_{i=1,\dots,n_k}$ of class $k$. 
We suppose here that the covariance of group $0$ and $1$ equal $C$, and that $s_{10}$ is known. The separation frontier between the two groups is affine and $F_{10}$ is the only unknown parameter. We suppose that the learning set is made of $n_1=n_0=n(p)/2$ $p$-dimensional vectors. We give a method to construct an estimator of $F_{10}$ and give theoretical results when $n(p)$ tends then to infinity much more slowly than $p$.\\

For $q>0$, the ball $l_p^q(R)$ is composed of the vectors $\theta\in \R^p$ such that 
\[
\sum_{i=1}^{p}|\theta_i|^q\leq R^q.
\]

We will note
\begin{equation}
\Omega_p(\Theta(R),r)=\left \{(x,y,C)\in \R^p\times \R^p \times \mathcal{C}_p \; \text{ such that } \right .
\end{equation}
\[\hspace{2cm} \left . C^{-1/2}(x-y)\in \Theta(R) \text{ and }\|C^{-1/2}(x-y)\|_{\R^p}\geq r\right\}
\]
where $\mathcal{C}_p$ is the set of symmetric definite positive matrices in $\R^p$. If $(\mu_0,\mu_1,C)\in \Omega_p(\Theta(R),r)$, we will note 
\begin{equation}\label{errapp2}
\mathcal{D}(\hat{\mathcal{L}}_{10})=\mathcal{C}(\1_{\hat{V}})-\mathcal{C}(\1_{V}),
\end{equation}
where $\hat{V}$ is given by (\ref{equ:parition1}) and $V$ is given by (\ref{optimal}).

\paragraph{The Procedure.}
The plug-in rule affect the observation $X$ to class $1$ if it belongs to $\hat{V}$ defined by (\ref{equ:parition1})  where 
 \[\widehat{\mathcal{L}}_{10}=\langle\hat{F}_{10},X-s_{10}\rangle_{\R^p}.\]
  We estimate $F_{10}=C^{-1}m_{10}$ by  $\hat{F}_{10}=C^{-1}\hat{m}_{10}$, where the coefficients of $C^{-1/2}\hat{m}_{10}$ are given by 
\[\left ( y_{10l} 1_{|y_{10l}|>\lambda^{FDR}_{10}}\right)_{l=1,\dots,p}, \;\;\text{ where }\;  y_{10l}=\left (C^{-1/2}(\bar{\mu}_{1}-\bar{\mu}_{0})\right )_{l=1,\dots,p},\]
and $\lambda^{FDR}_{10}$ is chosen by the Benjamini and Hocheberg procedure \cite{BH95} for the control of the false discovery rate (FDR) of the following multiple hypotheses:
\begin{equation}\label{multipletest}
\forall l=1,\dots,p \;\;\; H_{0l}\;:\; E[y_{10l}]=0\;\;\;:\text{ Versus } H_{0l}\;:\; E[y_{10l}]\neq0
\end{equation}
 We recall that this procedure is the following. The $(|y_{10l}|)_l$ are ordered in decreasing order:
\[
|y_{10(1)}|\geq \dots \geq |y_{10(p)}|\text{ and }  \lambda_{10}^{FDR}=|y_{10(k_{10}^{FDR})}|
\]
\[
\text{ where } k_{10}^{FDR}=\max\left \{k \in \{1,\dots,p\}\; : \; |y_{10(k)}|\geq \sqrt{\frac{1}{n(p)}}z\left (\frac{b_pk}{2p}\right )\right \},
\]
  $z(\alpha)$ is the quantile of order $\alpha$ of a standardized gaussian random variable and $b_p\in [0,1/2[$ is lower bounded by $\frac{c_0}{\log p}$ where $c_0$ is a positive constant (which does not depend on $p$.\\

\paragraph{Theoretical result}
 
\begin{theoreme}\label{thbp}
Let $R>0$, and $q\in ]0,2[$. Let $\hat{V}$ be defined by (\ref{equ:parition1}) and $\eta_p=p^{-\frac{1}{q}}R\sqrt{n(p)}$. Suppose that $p$ tends to infinity. If $\eta_p^q\in [\frac{\log^5(p)}{p},p^{-\delta}]$ for $\delta>0$, then, for $r>0$, we have 
\[
\sup_{(\mu_0,\mu_1,C)\in\Omega_p(l^q(R),r)}\E_{P^{\otimes n}}\left [\mathcal{D}_p(\hat{\mathcal{L}}_{10})\right ]\leq \frac{1+o_p(1)}{r} \left ( \sqrt{2}\frac{\log^{1/2}\left (\frac{p}{R^qn(p)^{q/2}}\right )}{Rn^{1/2}(p)}\right )^{\frac{2-q}{2}},
\]

where $\mathcal{D}_p$ is the excess risk as defined by (\ref{errapp2}), and $P^{\otimes n}$ is the law of the learning set.
\end{theoreme}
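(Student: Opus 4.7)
The strategy is to combine Theorem~\ref{the:32}, which controls the excess classification risk by the $L_2(\gamma_C)$ estimation error of $F_{10}$, with the adaptive minimaxity of Benjamini--Hochberg thresholding in sparse Gaussian sequence models (Abramovich, Benjamini, Donoho, Johnstone).

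Since $\hat s_{10}=s_{10}$ in the procedure considered, the constant-part term in the quantity $\mathcal{E}$ of (\ref{eq:er}) vanishes, and $\|F_{10}\|_{L_2(\gamma_C)}=\|C^{-1/2}m_{10}\|_{\R^p}\geq r$ by definition of $\Omega_p(\ell^q(R),r)$. Theorem~\ref{the:32} therefore yields
\[
\mathcal{D}_p(\hat{\mathcal{L}}_{10}) \leq \frac{\|F_{10}-\hat F_{10}\|_{L_2(\gamma_C)}}{r} = \frac{\|\theta-\hat\theta\|_{\R^p}}{r},
\]
where I set $\theta:=C^{-1/2}m_{10}$ and $\hat\theta:=C^{-1/2}\hat m_{10}$, the second equality being by definition of the $L_2(\gamma_C)$-norm. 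The task reduces to bounding $\E\|\theta-\hat\theta\|_{\R^p}$.

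I would then recognise this as a canonical sparse normal-means problem. With $n_1=n_0=n(p)/2$, a direct calculation gives $y:=C^{-1/2}(\bar\mu_1-\bar\mu_0)\sim\mathcal{N}(\theta,\sigma_p^2 I_p)$ with $\sigma_p^2=4/n(p)$; by construction $\hat\theta$ is the coordinate-wise hard-thresholding of $y$ at the Benjamini--Hochberg level $\lambda_{10}^{FDR}$; and the whitening $(\mu_0,\mu_1,C)\mapsto\theta$ turns the supremum over $\Omega_p(\ell^q(R),r)$ into one over $\theta\in\ell^q(R)$ with $\|\theta\|_{\R^p}\geq r$. The assumption $\eta_p^q=R^q n(p)^{q/2}/p\in[\log^5 p/p,\,p^{-\delta}]$ is exactly the ``moderately sparse'' window $\eta_p^q\to 0$, $p\,\eta_p^q\to\infty$ required by the Abramovich et al.\ theorem. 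In this regime, and with FDR level $b_p$ lower bounded by $c_0/\log p$, their adaptive-minimax result gives
\[
\sup_{\theta\in\ell^q(R)}\E\|\hat\theta-\theta\|_{\R^p}^2 \leq (1+o_p(1))\,R^q\sigma_p^{2-q}\bigl(2\log\tfrac{p}{R^q n(p)^{q/2}}\bigr)^{1-q/2}.
\]
Applying Jensen's inequality $\E\|\hat\theta-\theta\|_{\R^p}\leq(\E\|\hat\theta-\theta\|_{\R^p}^2)^{1/2}$, substituting $\sigma_p^2=4/n(p)$ and dividing by $r$ then produces the announced bound, the $\sqrt{2}$ arising from the $(2\log)^{1/2}$ Gaussian-quantile factor.

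The main technical obstacle is invoking the ABDJ result in exactly the uniform form needed. The lower endpoint $\eta_p^q\geq\log^5 p/p$ is the standard condition ensuring that the expected number of true signals exceeding the noise dominates the false-discovery contribution, so that BH attains the sharp $(\log)^{1-q/2}$ rate rather than the conservative $(\log p)^{1-q/2}$ of universal thresholding; the upper endpoint $\eta_p^q\leq p^{-\delta}$ keeps us strictly in the sparse regime where that rate is sharp. One also has to reconcile the threshold $\sqrt{1/n(p)}\,z(b_pk/(2p))$ used here with the true per-coordinate noise standard deviation $\sigma_p=2/\sqrt{n(p)}$, which only shifts constants and is benign for the rate. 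Once these verifications are in hand, the remainder is algebraic book-keeping of powers of $R$, $n(p)$ and $\log$.
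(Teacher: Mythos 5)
Your proposal is correct and follows essentially the same route as the paper: reduce the excess risk to the $\ell^2$ estimation error of $C^{-1/2}m_{10}$ via Theorem~\ref{the:32} (with the constant term vanishing since $\hat s_{10}=s_{10}$), invoke the adaptive minimaxity of FDR thresholding from Abramovich et al.\ (together with the Donoho--Johnstone minimax rate over $\ell^q$ balls), and finish with Jensen's inequality. Your remark about reconciling the noise level $\sigma_p^2=4/n(p)$ of $C^{-1/2}(\bar\mu_1-\bar\mu_0)$ with the normalisation $1/n(p)$ used in the threshold is a legitimate point that the paper's own one-line proof passes over silently; it affects only constants, not the rate.
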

\begin{proof} The covariance matrix of the vector  $C^{-1/2}(\bar{\mu}_1-\bar{\mu}_0)$ equals $I_p \frac{1}{n(p)}$. We then have to use successively Theorem \ref{the:32} (of this article),  Theorem $1.1$ of Abramovich et .al \cite{ABDJ05},  and Theorem $5$ point $3b.$ of Donoho and Johnstone \cite{Donoho:1994uq} to be able to write, $\forall r>0$: 
\[
\sup_{(\mu_0,\mu_1,C)\in\Omega_p(l^q(R),r)}\E_{P^{\otimes n}}\left [\mathcal{D}^2_p(\hat{\mathcal{L}}_{10})\right ]\leq \frac{1+o_p(1)}{r^2} \left ( \sqrt{2}\frac{\log^{1/2}\left (\frac{p}{R^qn(p)^{q/2}}\right )}{Rn^{1/2}(p)}\right )^{2-q}.\]
This inequality leads to the result by the use of the Jensen inequality:
\[\E_{P^{\otimes n}}\left [\mathcal{D}_p(\hat{\mathcal{L}}_{10})\right ]\leq \E_{P^{\otimes n}}\left [\mathcal{D}^2_p(\hat{\mathcal{L}}_{10})\right ]^{1/2}.\]
\end{proof}
\paragraph{Comments.}
Let us make a few remarks on this result. 
\begin{enumerate}
\item The rate of convergence is faster when $q$ is close to $0$, and slower when it is close to $2$. This leads us to consider the sparsity of $C^{-1/2}(\mu_0-\mu_1)$, and makes the use of the wavelet basis attractive.  On the one hand, it transforms a wide class of curves into sparse vectors and on the other hand, it almost diagonalises a wide class of covariance operators. 
\item  We could obtain the same speed with a universal threshold (i.e with the threshold $\lambda_U=\frac{1}{n(p)}\sqrt{2\log(p)}$). In this case, the  constant $\frac{1+o_p(1)}{r^2}$ would not be that good (cf \cite{ABDJ05}).
\item We are not aware of any results concerning the convergence of any classification procedure in this framework (the high dimensional gaussian framework with the set of possible parameter determined by $\Omega_p$). Indeed we do not make any strong assumption on $C$.  Bickel and Levina \cite{bickel:2004fk} as well as Fan and Fan \cite{Fans:2007vn} suppose in their work that the ratio between the highest and the lowest eigenvalue is lower and upper-bounded. Even if our Theorem doesnot treat the case where $C$ is unknown the hypotheses we use seems more natural. Let us recall  that  if $Y$ is a gaussian random variable with values in a Hilbert Space, then the covariance operator is necessarily nuclear. Also, the assumption used by the above mentioned authors does not allow us to consider gaussian measures with support in a Hilbert space. 
\item Finding the significant component of the normal vector $F_{10}$ defining the optimal separating hyperplan is equivalent with finding the significant contrast in a multivariate ANOVA. Hence, controlling the expected false discovery rate in this ANOVA is sufficient to get a good classification rule. 
\end{enumerate}
\subsection{The case of different unknown covariances}

 For the rest of this section, if $k\in \{0,1\}$, $\bar{\mu}_{k}$ will be the empirical mean of the Learning data of class $k$. We are going to use a diagonal estimator $\hat{C}_k$ of the covariance matrix $C_k$. The diagonal elements of $\hat{C}_k$ will be $(\hat{\sigma}^2_{kq})_{q=1,\dots,p}$. For $q\in \{1,\dots,p\}$, $k\in \{0,1\}$,  $\hat{\sigma}^2_{kq}$ will we the unbiased version of the empirical variance of feature $q$ of the observations $(X_{ikq})_{i=1,\dots,n_k}$ of class $k$.  We will note
 \[\hat{s}_{10}=(\bar{\mu}_1+\bar{\mu}_0)/2.\]

 The classification rule used chooses that $X\in \R^p$ comes from the class $k$ if $X$ belongs to $\hat{V}_k$ given by (\ref{equ:parition1}) and
\[\hat{\mathcal{L}}_{10}=-\frac{1}{2}\langle \hat{A}_{10}(x-\hat{s}_{10}),x-\hat{s}_{10}\rangle_{\R^p}+\langle \hat{G}_{10},x-\hat{s}_{10}\rangle_{\R^p}-\hat{c}_{10},\] 
where the quantities of this equation will be given in what follows. for all $(1,0)\in \{1,\dots,K\}^2$, $1\neq 0$, we now give $\hat{G}_{10}$ (equation (\ref{vdir})), $\hat{A}_{10}$ (equation \ref{matricxc}), and $\hat{c}_{10}$ (equation \ref{hatcij}).\\

   We estimate $G_{10}=\frac{1}{2}(C_1^{-1}+C_{0}^{-1})m_{10}$ by  
\begin{equation}\label{vdir}
\hat{G}_{10}=\left (\frac{1}{\sqrt{2}}\left (\frac{1}{\hat{\sigma}^2_{1q}}+\frac{1}{\hat{\sigma}^2_{0q}}\right )^{1/2}y_{10q} 1_{|y_{10q}|>\lambda^{FDR}_{10}}\right)_{q=1,\dots,p}
\end{equation}
\[ \text{ where } y_{10q}=\frac{1}{\sqrt{2}}\left (\frac{1}{\hat{\sigma}^2_{1q}}+\frac{1}{\hat{\sigma}^2_{0q}}\right )^{1/2}(\hat{\mu}_{1q}-\hat{\mu}_{0q}),\]
and $\lambda^{FDR}_{10}$ is chosen by the Benjamini and Hocheberg procedure. This procedure is the following. Let $Var_0(y_{ijq})$ be the variance of  $y_{10q}$ calculated under the hypothesis that $\mu_{1q}=\mu_{0q}$. The term
\[
 \frac{1+\hat{\sigma}^2_{1q}/\hat{\sigma}^2_{0q}}{2n_1}+\frac{1+\hat{\sigma}^2_{0q}/\hat{\sigma}^2_{1q}}{2n_0}
\]
  is an estimation of this variance when $\sigma^2_{kq}$ ($k=0,1$) are known and equal to $\hat{\sigma}^2_{kq}$. In practice, we substitute these terms for $Var_0(y_{10q})$. The real 
  \[(|y_{10q}|/\sqrt{Var_0(y_{10q})})_{q=1,\dots, p}\]
   are ordered by decreasing order:
\[
|y_{10(1)}|/\sqrt{Var_0(y_{10(1)})}\geq \dots \geq |y_{10(p)}/\sqrt{Var_0(y_{10(p)})}|\text{ and }  \lambda_{10}^{FDR}=|y_{10(k_{10}^{FDR})}|
\]
where
\[ k_{10}^{FDR}=\max\left \{k \; : \; |y_{10(k)}|\geq \sqrt{\frac{1+\hat{\sigma}^2_{1(k)}/\hat{\sigma}^2_{0(k)}}{2n_1}+\frac{1+\hat{\sigma}^2_{0(k)}/\hat{\sigma}^2_{1(k)}}{2n_0}}z\left (\frac{b_pk}{2p}\right )\right \},
\]
  $z(\alpha)$ is the quantile of order $\alpha$ of a standardized gaussian random variable and $b_p\in [0,1[$ is as in the preceding algorithm.\\
\indent   In practice, we choose $b_p=0.01$, but one could keep a part of the learning set to learn the best value of $b_p$. Note that in the application we have in mind, the learning set is too small to be divided. In addition, the choice of $b_p$, in view of Theorem \ref{thbp} does not determine the performances of the algorithm. In practice the difference of classification error between the choices $b_p=0.01$ and $b_p=0.05$ for example, is not important.\\
\indent This first part of the methods constitute a dimension reduction. Indeed, the only coordinates of $(\hat{G}_{10q})_{q=1,\dots,p}$ that are kept non null are those for which $|y_{10q}|\geq \lambda_{ij}^{FDR}$. The linear application associated with $(\hat{G}_{10q})_{q=1,\dots,p}$ only acts in $k_{10}^{FDR}$ directions. Let us also note that if we extend our procedure to a multiclass procedure, for two couples of classes $(i,j)\neq (l,m)$, the corresponding estimations $G_{ij}$ and $G_{lm}$ might be based on different dimension reduction.\\

\begin{remarque}
The testing procedure used can be analysed as a "vertical" ANOVA that reveals the interesting direction 
\begin{enumerate}
\item in which classification should be done (with thresholding estimation of $G_{10}$)
\item in which classification should be quadratic (with thresholding estimation of $A_{10}$).
\end{enumerate}
\end{remarque}
 
The matrix $A_{10}$ is estimated by a diagonal matrix with diagonal elements given by
\begin{equation}\label{matricxc}
\hat{a}_{10q}=\left ( \frac{1}{\hat{\sigma}_{1q}^2}-\frac{1}{\hat{\sigma}_{0q}^2}\right ) 1_{|w_{10q}|\geq \eta_{10}^{FDR}},\text{ where }\;\;w_{10q}=\hat{\sigma}^2_{1q}-\hat{\sigma}^2_{0q},\;\;\;\; q=1,\dots,p,
\end{equation}
and the threshold $\eta_{10}^{FDR}$ is chosen with the same type of procedure as the one used to find $\lambda_{10}^{FDR}$. Let $Var_0(w_{10q})$ be the variance of $w_{10q}$ under the hypothesis that $\sigma_{1q}=\sigma_{0q}$. The term $\frac{2\hat{\sigma}_{1q}^4}{n_1-1}+\frac{2\hat{\sigma}_{0q}^4}{n_0-1}$ is an estimation of it that we use in practice. The real numbers $(|w_{10q}/\sqrt{Var_0(w_{10q})}|)_q$ are ordered by decreasing order:
\[
|w_{10(1)}/\sqrt{Var_0(w_{10p})}|\geq \dots \geq |w_{10(p)}/\sqrt{Var_0(w_{10p})}|\text{ and }  \eta_{10}^{FDR}=|w_{10(k_{10}^{FDR})}|
\]
\[\text{ where } k_{10}^{FDR}=\max\left \{k \; : \; |w_{10(k)}|\geq \sqrt{\frac{2\hat{\sigma}_{1(k)}^4}{n_1-1}+\frac{2\hat{\sigma}_{0(k)}^4}{n_0-1}}z\left (\frac{b_pk}{2p}\right )\right \}.
\]
 This part of the method constitutes a linearisation of the rule. Indeed, the directions $q\in \{1,\dots,p\}$ in which $\hat{a}_{10q}$ is $0$ are the directions in which the classification rule between the groups $1$ and $0$ is linear. In the other directions, the rule is quadratic.\\
\indent The use of this methods is still motivated by Theorem \ref{thbp} and the theorems used in its proof, but it needs additional theoretical justification.\\
 
We will finally note: 
\begin{equation}\label{hatcij}
\hat{c}_{10}=\sum_{q=1}^p1_{|w_{10q}|\geq \eta_{10}^{FDR}}\left (\frac{1}{8} \hat{a}_{10q} (\bar{\mu}_{1q}-\bar{\mu}_{0q})^2+\frac{1}{2}\log|\det(\hat{\sigma}_{0q}^{-1}\hat{\sigma}_{1q})|\right ).
\end{equation}

\section{Application to medical data and the TIMIT database}\label{application}

We are going to study the performance of the given procedure. With that aim, we compare our method with the one given by Rossi and Villa \cite{Rossi:2006fk} on the  database TIMIT. We then use test our procedure on medical data. 
\subsection{Comparison of our method with the one of Rossi and Villa in the case of two class classification}

Rossi and Villa use a support vector machine (SVM) with different types of kernels. Recall that the SVM procedure is to construct an affine frontier function $f$ given by
\[f(x)=\langle w,x \rangle_{\R^p}+b,\]
where $w$ and $b$ are solutions of an optimization problem of the following type:
\[\min_{w,b,\xi} \|w\|_{\R^p}^2+C\sum_{i=1}^N\xi_i\]
\[ \text{ under } y_i\left (\langle w,x_i\rangle_{\R^n}+b\right )\geq 1-\xi_i,\;\;\;\xi_i\geq 0\;\;\;i=1,\dots,n\]

where $(x_i,y_i)_{i=1,\dots,n}$ are the couples  (observations, labels) of the learning set.  \\

	The TIMIT database has notably been studied by Hastie et al. \cite{Hastie:1995fk}. This database includes phonemes "aa"  and " ao "  pronounced by many different persons. The corresponding records are  curves observed at a fine enough sampling frequency. More precisely, one curve is a $p$-dimensional vector with $p=256$. The learning set is composed of $519$ "aa"  and $759$ " ao " and the test set is composed of $176$ "aa"  and $263$ "ao". Also, the curves  $(x_{i})_{i=1,\dots,519}$ are those which correspond to the pronunciation of phoneme "aa" and the label $y_i=0$ is associated to them. The label "1" is associated to the other curves which correspond to the pronunciation of phoneme " ao ". The method of Rossi and Villa gives almost the same results as ours: $20\%$ of classification mistakes. 
    
\subsection{Application to medical data}
The medical problem is the following. In Magnetic resonance imagery, one can obtain spectra characterizing tissues localized in some area of the brain. The spectra obtained can be used to characterize tumors. Unfortunately, even for a specialist, it is hard to define a good rule to associate the name of a tumor with a given spectra. Some spectra have been obtained on identified tumors. We have been given these spectra. In order to have enough spectra in our learning set, we retained five groups of spectra (some of them regrouping many tumors). The glioblastomes of the first type\footnote{The group of Glioblastomes has a too large variability, also, we chose to divide it into two groups: first type and second type. These two types correspond to the presence of certain chemical substances.}, the glioblastomes of the second type, the Meningiomes, the Metastases and the healthy tissues. The database provided by the specialists contains $21$ glioblastomes of first type, $9$ glioblastomes of second type, $16$ Méningiomes,  $18$ métastases and $9$ healthy tissues, that is, $75$ spectra sampled at $1024$ points. We give the plot of the spectra considered in Figure \ref{fig:contour}. In order to test our procedure, we used a strategy of type "leave on out". Figure \ref{fig:tum} leads us to an experimental confirmation that in the case of two class classification, the chosen dimension is a good one. 

\begin{figure}[htp]
  \centering
  \subfigure[$21$ glioblastomes A]{\label{fig:edge-a}\includegraphics[scale=0.41]{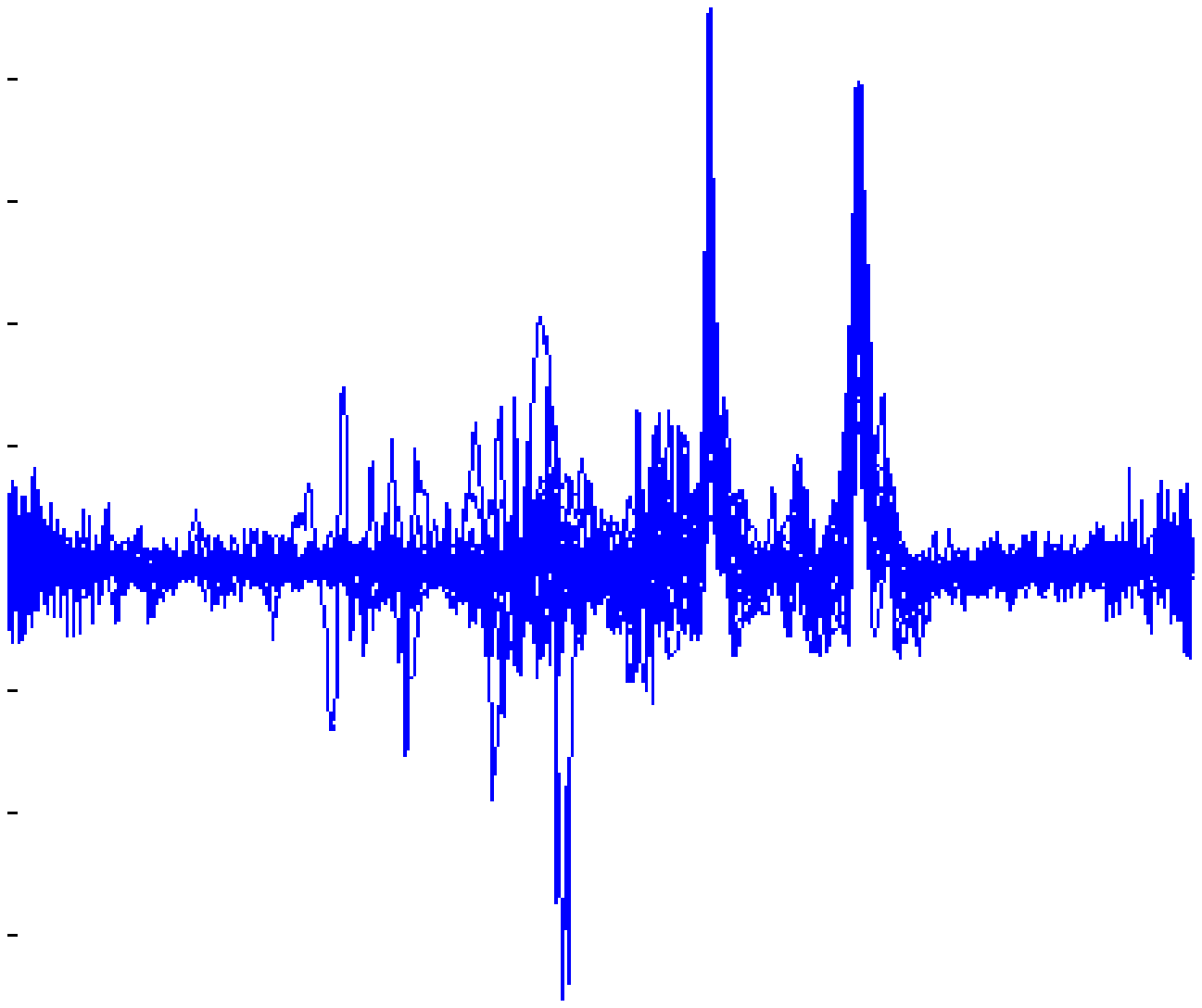}}                
  \subfigure[$9$ glioblastomes B]{\label{fig:contour-b}\includegraphics[scale=0.41]{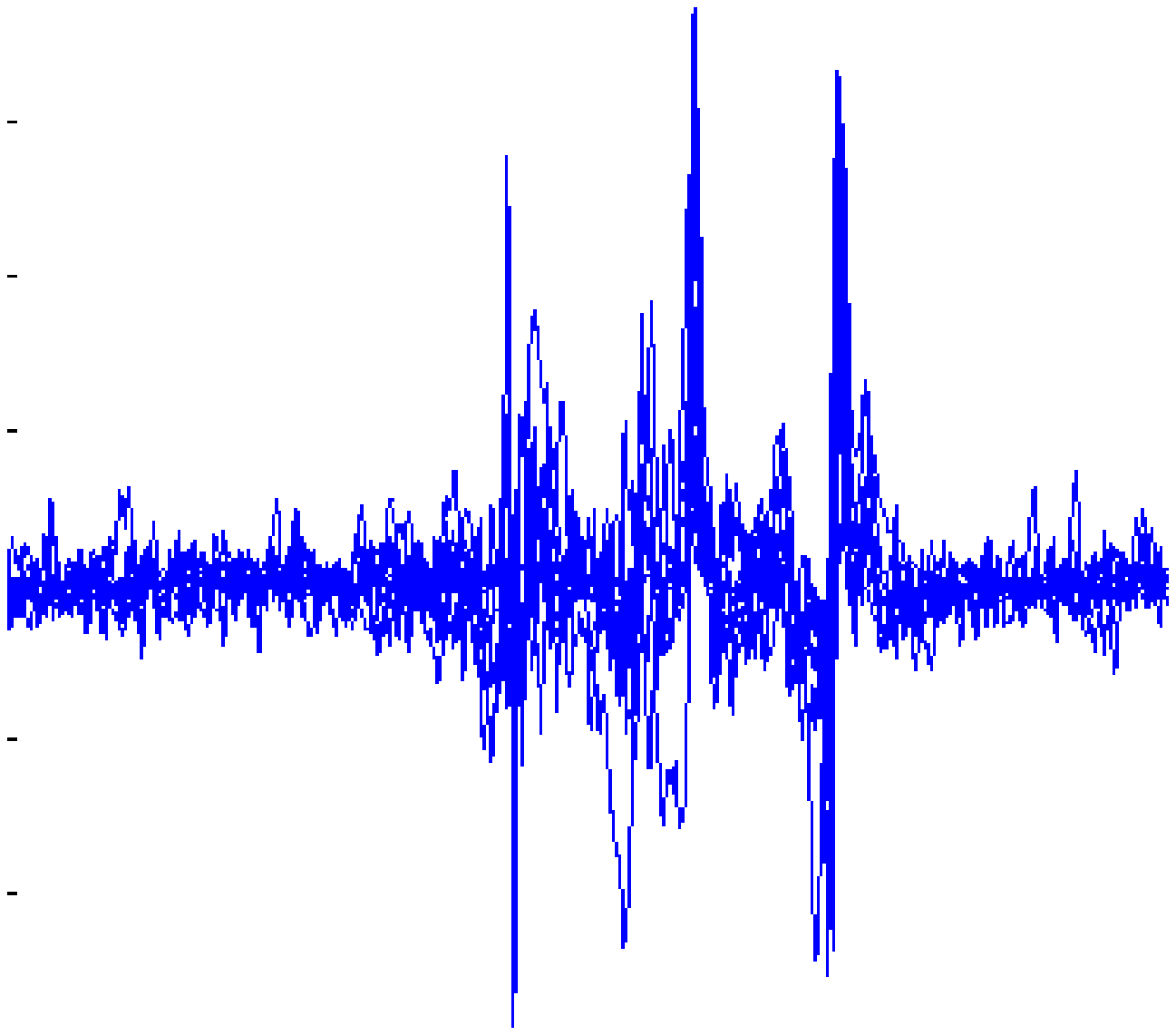}} 
  \subfigure[$16$ Meningiomes]{\label{fig:contour-c}\includegraphics[scale=0.41]{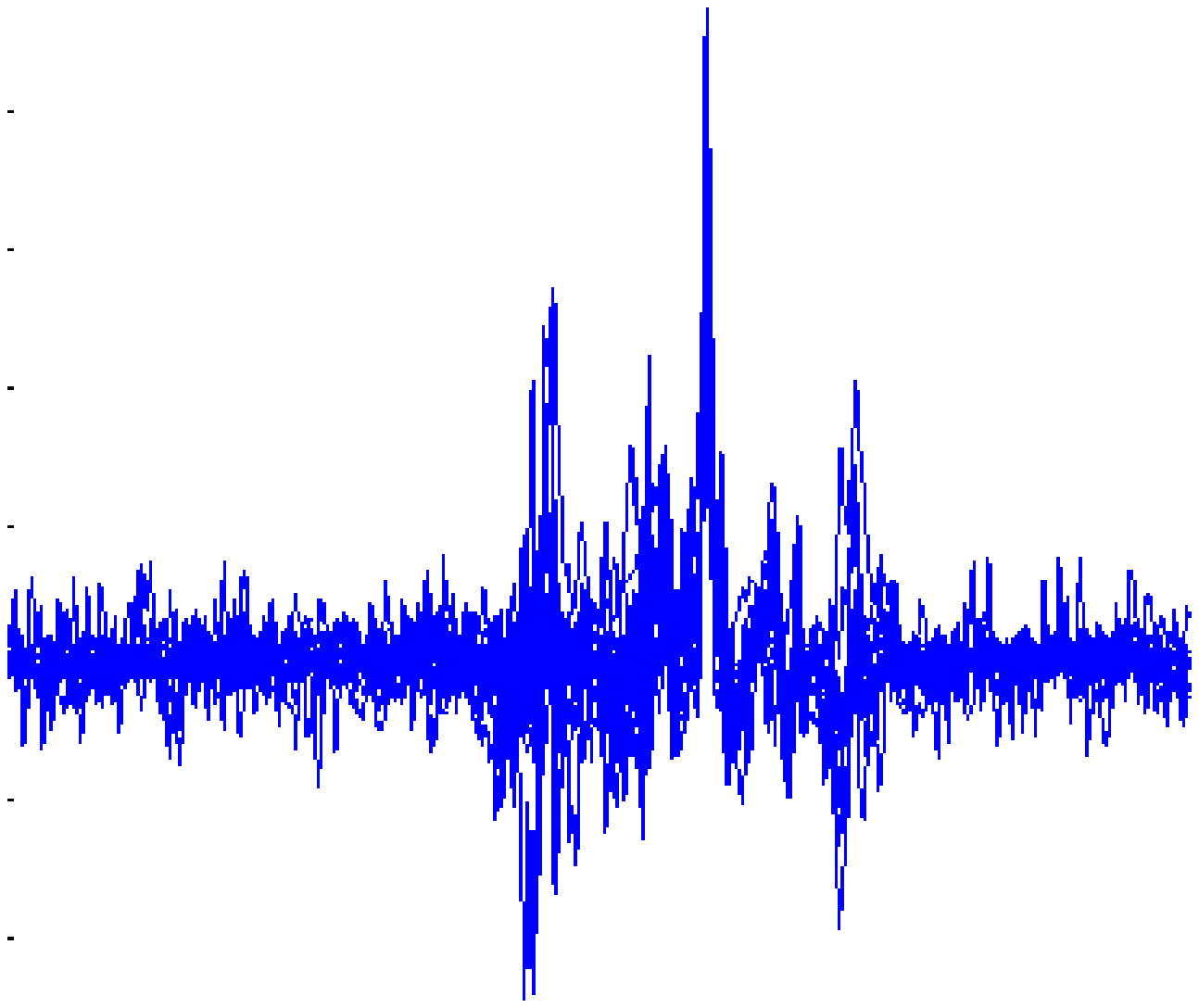}}
    \subfigure[$18$ metastases]{\label{fig:contour-d}\includegraphics[scale=0.41]{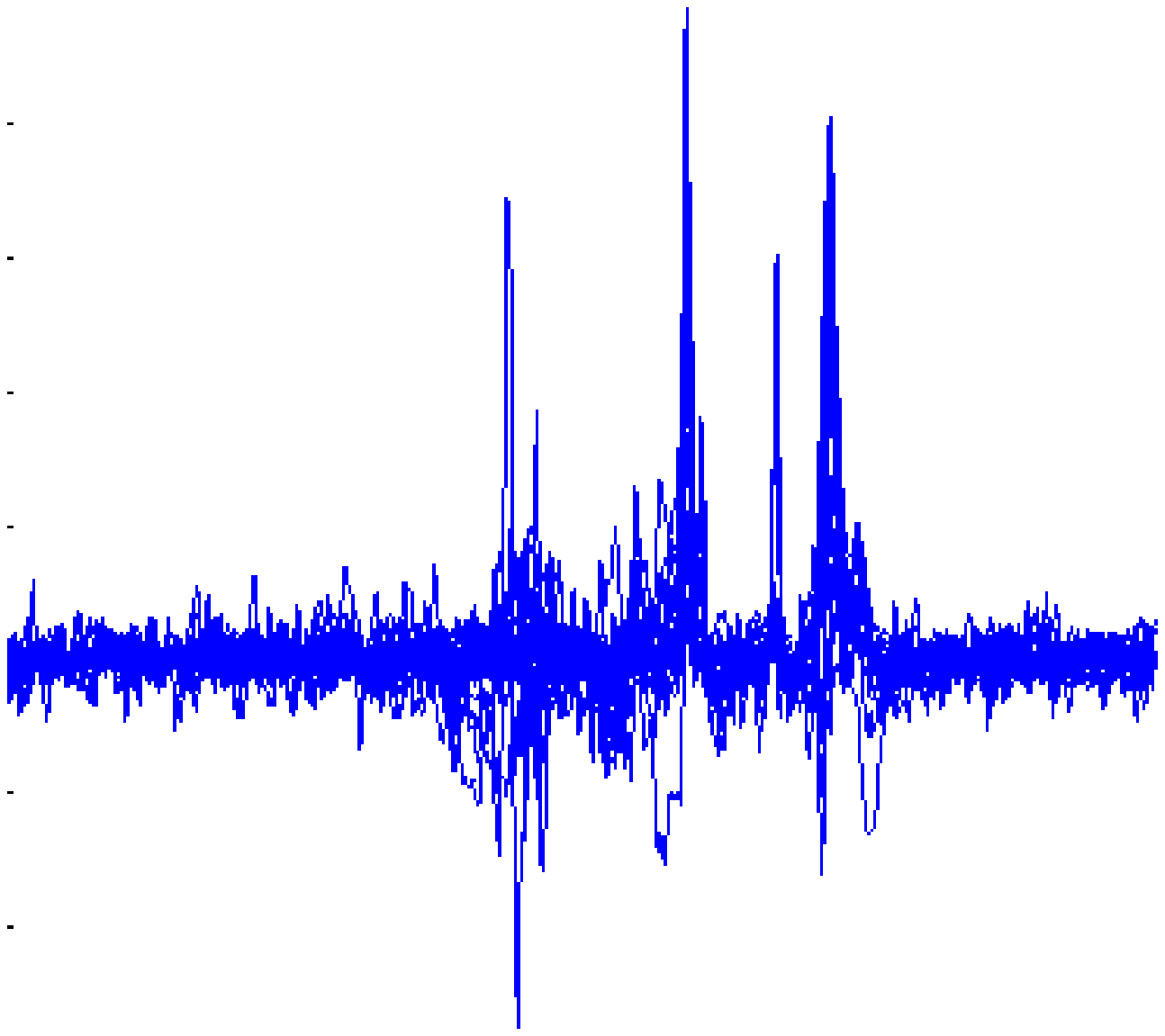}}
    \subfigure[$9$ healthy tissues]{\label{fig:contour-d}\includegraphics[scale=0.41]{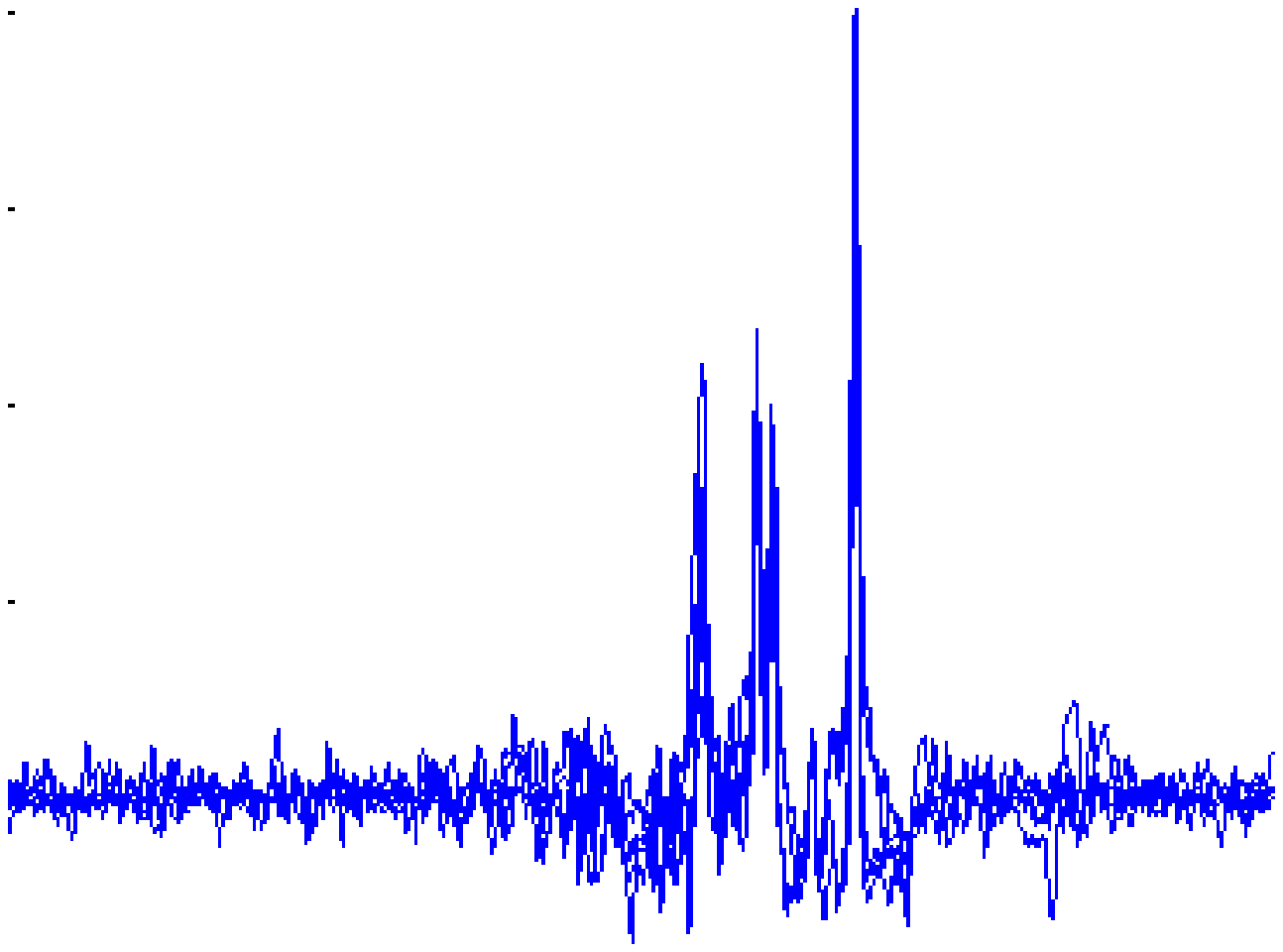}} 
    \caption{Spectra of the learning set}
  \label{fig:contour}
\end{figure}

\begin{figure}
\begin{center}
\begin{tabular}{|c|c|c|c|}
\hline
Groups considered & all & all except & Glioblastomes of first type \\
& &  Metastases & and Meningiomes\\
\hline
error rate & 43 \% & 30 \%  & 5\% \\
\hline
\end{tabular}
\end{center}
\caption{Considered groups and error rate in each case.}\label{tabl3}
\end{figure} 
\begin{figure}[htbp]
\begin{center}
\includegraphics[width=10cm]{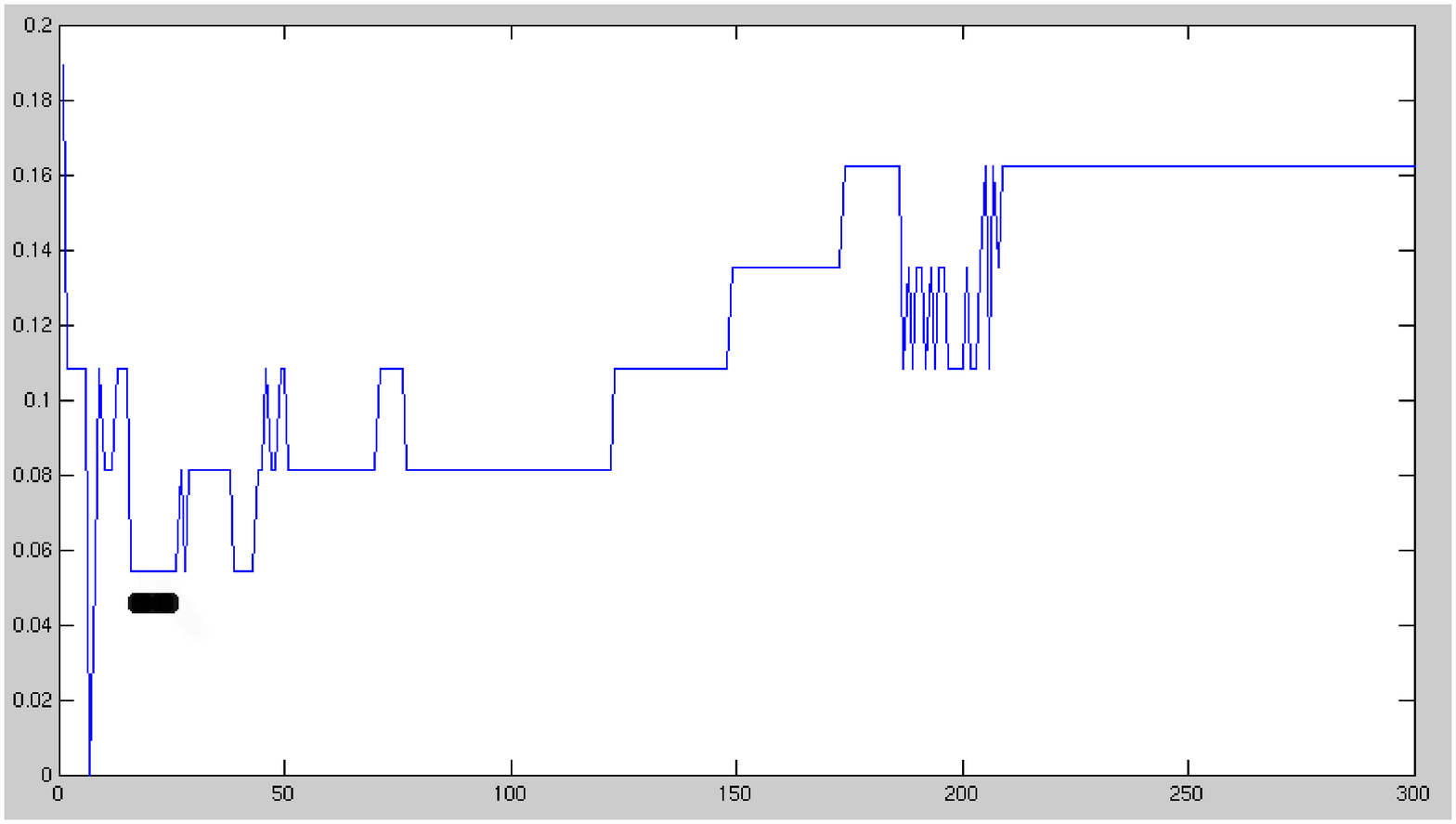}
\caption{Classification error rate (in  a two group problem: Méningiomes versus Glioblastomes of first type) as a function of the selected dimension. The dimension selected by our algorithm is marked by a black point in the Figure.}
\label{fig:tum}
\end{center}
\end{figure}

We tested different configurations summarized in the table Figure \ref{tabl3}. The classification error rate is still significant, but the reduction dimension procedure provides a reduction of the error rate (Recall that in the case of $4$ groups having equal a priori probability a rule that would guess randomly the type of tumor would have an error rate of $75 \%$). There are two reasons for this moderate performances.\\
\indent Roughly, theoretical physic predicts that a spectrum associated with a given tumor, for example a Glioblastome, is a random variable $y=(y_{q})_{q=1,\dots,p}$ that has a quite small variability. Also, we shuold be able to separate easily spectra associated with different groups. Unfortunately, in practice, the instrumentation leads to a measurement of spectra $z=(z_{q})_{q=1,\dots,p}$ having complex values and for which there exists a sequence of angles $(\psi_{q})_{q=1\dots,p}$ such that:
\[\forall q\in \{1,\dots,p\} \;\;\; y_q=\Re(e^{i\psi_q}z_q).\]
This sequence of angles is unknown. The theoretical physics of instrumentation shows that there are two real $(a,b)$ such that  
\[\forall q\in \{1,\dots,p\} \;\; \psi_q=aq+b.  \]
Methods to obtain $a$ and $b$ are not sufficiently efficient, but this represents an active field of research. We chose to ask the physicians to change the phase manually in order to have a homogeneous real part of the spectra in a particular group and we kept the real part of the spectra. The change of phase made by the physicians is not optimal and the residual variation of the phase creates a certain disparity  of observed spectra inside each group. This disparity can be seen Figure \ref{fig:contour}. The incorporation of the phase into a classification algorithm, and the use of the complex nature of the data will be the object of further studies. We note, however that these phase problems in the Fourier domain can be translated interestingly in the temporal domain.\\
\indent Finally, the learning set is still too small. We hope to see the size increase in the forthcoming years.

%%%%%%%%%%%%%%%%%%%%%%%%%%%%%%%%%%%%%%%%%%%%%%%%%%%%%%%%%%%%%%%%%%%%%%%%%%%%%%%%%%%%%%%%%%%%%%%%%%%%%%%%%%%%%%%%%%%%%%%%%%%%%%%%%

\section{A more geometric alternative measure of error: the learning error}\label{errapp}
\subsection{Definition and main result}
We have already defined the learning error to be 
\[
\mathcal{R}(g)=P(g(X)\neq Y \text{ et }g^*(X)=Y),
\]
which when $Y\leadsto \mathcal{U}(\{0,1\})$ equals 
\[\mathcal{R}(g)=\frac{1}{2}\left ( P_1(g(X)\neq 1 \text{ et }g^*(X)= 1)+ P_0(g(X)\neq 0 \text{ et }g^*(X)= 0)\right ).\]
 In other words, the learning error is the probability to misclassify $X$ with $g$ and to classify it correctly with $g^*$. The point that motivates the use of this error is that 
 \begin{enumerate}
 \item it leads to a simple geometric interpretation (mostly used in the two following Sections) and hence it is used in all the further theoretical development we will give;
 \item it is not sensitive to the possible indistinguishability of the distributions $P_0$ and $P_1$ and it leads to lower bounds as in Section $2$ (see remark below).
 \end{enumerate}
It follows easily from 
\[\mathcal{C}(g)-\mathcal{C}(g^*)=P(g(X)\neq Y\text{ et }g^*(X)= Y)-P(g(X)= Y\text{ et }g^*(X)\neq Y),\]
that a classification rule $g$ satisfies:
\begin{equation}\label{majerr}
\mathcal{C}(g)-\mathcal{C}(g^*)\leq \mathcal{R}(g).
\end{equation}
In the gaussian case that is studied in this article, we proved the following theorem that gives a reverse inequality of (\ref{majerr}). 

\begin{theoreme}\label{theorem-recip}
 Let $g^*$ be the optimal rule in the binary classification problem (as presented in Section $1$).
\begin{enumerate}
\item If $P_0$ and $P_1$ have the same covariance $C$ and respective means $\mu_1$ and $\mu_0$,  then, for all measurable functions $g:\R^p \rightarrow \{0,1\}$, we have:
\[\mathcal{C}(g)-\mathcal{C}(g^*) \geq\min\left \{ \frac{\sqrt{2\pi}}{2*16^2}\|C^{-1/2}m_{10}\|_{\R^p}e^{\frac{\|C^{-1/2}m_{10}\|_{\R^p}^2}{8}}\mathcal{R}(g)^{2},\frac{\mathcal{R}(g)}{8}\right \},\]
 where $m_{10}=\mu_1-\mu_0$.

\item Let $c_1>0$ and $\mathcal{P}(c_1)$ be the set of couples $(P,Q)$ of gaussian measure on $\R^p$ such that $d_1(P,Q)>c_1$.
If $(P_1,P_0)\in \mathcal{P}(c_1)$ then there exists a constant $c(c_1)>0$ (that only depends on $c_1$) such that 
\[\mathcal{C}(g)-\mathcal{C}(g^*) \geq \min\left \{c(c_1)\mathcal{R}(g)^{8},\frac{\mathcal{R}(g)}{8}\right \}.\]

\end{enumerate}
\end{theoreme}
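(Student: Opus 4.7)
}

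The plan is to bootstrap a lower bound on $\mathcal{C}(g)-\mathcal{C}(g^*)$ from $\mathcal{R}(g)$ by slicing the symmetric difference $V\Delta\hat V$ according to the magnitude of the log-likelihood $\mathcal{L}_{10}$. Starting from the identities
\[
\mathcal{C}(g)-\mathcal{C}(g^*)=\tfrac{1}{2}\int_{V\Delta\hat V}|dP_1-dP_0|,\qquad \mathcal{R}(g)=\tfrac{1}{2}\bigl[P_1(V\setminus\hat V)+P_0(\hat V\setminus V)\bigr],
\]
the key observation is that on $V\setminus\hat V$ one has $\mathcal{L}_{10}\geq 0$ and $dP_1-dP_0=(1-e^{-\mathcal{L}_{10}})dP_1$, while on $\hat V\setminus V$ one has $\mathcal{L}_{10}<0$ and $dP_0-dP_1=(1-e^{\mathcal{L}_{10}})dP_0$. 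Hence on the ``far'' part $\{|\mathcal{L}_{10}|>t\}\cap(V\Delta\hat V)$, the integrand giving $\mathcal{C}(g)-\mathcal{C}(g^*)$ dominates the integrand defining $\mathcal{R}(g)$ up to the factor $(1-e^{-t})$. The ``near'' part $\{|\mathcal{L}_{10}|\leq t\}\cap(V\Delta\hat V)$ contributes at most $\tfrac{1}{2}\bigl[P_0(|\mathcal{L}_{10}|\leq t)+P_1(|\mathcal{L}_{10}|\leq t)\bigr]$. Combining these gives the master inequality
\[
\mathcal{C}(g)-\mathcal{C}(g^*)\;\geq\;(1-e^{-t})\Bigl[\mathcal{R}(g)-\tfrac{1}{2}P_0(|\mathcal{L}_{10}|\leq t)-\tfrac{1}{2}P_1(|\mathcal{L}_{10}|\leq t)\Bigr],
\]
valid for every $t>0$, and the proof reduces to an anti-concentration estimate for $\mathcal{L}_{10}(X)$ followed by optimising in $t$.

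For part (1), $\mathcal{L}_{10}(X)=\langle F_{10},X-s_{10}\rangle$ is, under $P_0$, a real Gaussian of mean $-r^2/2$ and variance $r^2$, where $r=\|C^{-1/2}m_{10}\|_{\R^p}$; the symmetric situation holds under $P_1$. The density on $(-t,t)$ is bounded by its value at the endpoint closest to the mean, which for $t\leq r^2/4$ is at most $\frac{1}{r\sqrt{2\pi}}e^{-r^2/8}(1+o(1))$. This gives $P_i(|\mathcal{L}_{10}|\leq t)\leq 2t/(r\sqrt{2\pi})\,e^{-r^2/8}$. Plugging into the master inequality and using $1-e^{-t}\geq t/2$ for $t$ bounded, one chooses $t$ proportional to $r\sqrt{2\pi}\,e^{r^2/8}\mathcal{R}(g)$; provided this $t$ satisfies the constraints, the resulting bound is of the form $c\,r\,e^{r^2/8}\mathcal{R}(g)^2$. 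When the optimal $t$ saturates the constraints, we fall back on the trivial lower bound obtained by taking $t$ fixed, which yields a multiple of $\mathcal{R}(g)$, producing the $\mathcal{R}(g)/8$ branch of the minimum.

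For part (2), the log-likelihood $\mathcal{L}_{10}$ is now a polynomial of degree two in the Gaussian variable $X$, so Gaussian anti-concentration (Carbery--Wright for degree-two polynomials, or the explicit density estimate for weighted $\chi^2$ differences) gives a bound of the type $P_i(|\mathcal{L}_{10}|\leq t)\leq C\bigl(t/\|\mathcal{L}_{10}\|_{L_2(P_i)}\bigr)^{\alpha}$ for some $\alpha>0$. The hypothesis $d_1(P_1,P_0)\geq c_1$ is used to convert into a lower bound on $\|\mathcal{L}_{10}\|_{L_2(P_i)}$ depending only on $c_1$: since $d_1(P_1,P_0)=\int|e^{\mathcal{L}_{10}}-1|\,dP_0\leq \|e^{\mathcal{L}_{10}}-1\|_{L_2(P_0)}$ and $e^{\mathcal{L}_{10}}-1$ is controlled by $\mathcal{L}_{10}$ up to Gaussian hypercontractive factors, one obtains a lower bound $\|\mathcal{L}_{10}\|_{L_2(P_0)}\geq \psi(c_1)$. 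Feeding this into the master inequality and optimising in $t$ produces a polynomial lower bound $\mathcal{C}(g)-\mathcal{C}(g^*)\geq c(c_1)\mathcal{R}(g)^{k}$ for some explicit exponent~$k$; the value $k=8$ in the statement reflects the compounded losses from converting the $d_1$-hypothesis into an $L_2$-norm bound on a degree-two polynomial and from the subsequent $t$-optimisation.

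The main obstacle is part (2): deriving a uniform anti-concentration estimate for the quadratic form $\mathcal{L}_{10}(X)$ with a constant depending \emph{only} on $c_1$. The delicate point is that Carbery--Wright scales with $\|\mathcal{L}_{10}\|_{L_2}$, so one must quantitatively pass from the $L_1$-type hypothesis $d_1(P_1,P_0)\geq c_1$ to a lower bound on $\|\mathcal{L}_{10}\|_{L_2(P_0)}$ that is robust under arbitrary dimension and arbitrary pairs of equivalent Gaussian measures; this controlled conversion (together with the accompanying loss in exponents) is what drives the exponent $8$ and the precise form of $c(c_1)$.
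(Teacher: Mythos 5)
Your master inequality is correct and gives a genuinely different, and in fact cleaner, decomposition than the paper's. The paper does not slice $V\Delta\hat V$ by the level of $|\mathcal{L}_{10}|$ relative to a free parameter $t$; instead it writes $dP_1-dP_0=2e^{f_{10}}\sinh(\mathcal{L}_{10}/2)\,dP$ for a dominating measure $P$, splits $K_1=\{g\neq1,g^*=1\}$ at the fixed level $\mathcal{L}_{10}=2$, handles the upper piece by $\sinh\geq\frac12\cosh$ there, and handles the lower piece by an anti-concentration bound for $|\sinh(\mathcal{L}_{10}/2)|$ under the normalized geometric-mean measure $\nu\propto\sqrt{dP_1\,dP_0}$, with a pigeonhole argument deciding which piece carries at least $\mathcal{R}(g)/4$. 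Your bound
\[
\mathcal{C}(g)-\mathcal{C}(g^*)\;\geq\;(1-e^{-t})\Bigl[\mathcal{R}(g)-\tfrac12 P_1(|\mathcal{L}_{10}|\leq t)-\tfrac12 P_0(|\mathcal{L}_{10}|\leq t)\Bigr]
\]
reaches the same destination with the anti-concentration performed directly under $P_0$ and $P_1$, avoids the auxiliary measure $\nu$ and the Hellinger-affinity bookkeeping, and makes the role of the two branches of the $\min$ transparent (the $\mathcal{R}/8$ branch is just the regime where the optimal $t$ is capped). For part (1) your computation of the law of $\mathcal{L}_{10}$ under $P_i$ is right and the optimisation gives the stated form, modulo the $e^{t/2}$ correction to the endpoint density that you absorb by keeping $t$ bounded; this is only constant bookkeeping. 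For part (2), Carbery--Wright with exponent $1/2$ would actually yield $\mathcal{R}(g)^{3}$, which implies the stated $\mathcal{R}(g)^{8}$ bound since $\mathcal{R}\leq1$ (the paper's exponent $8$ comes from its weaker home-made small-ball estimate, Theorem \ref{th:formquadra}, with exponent $1/7$ under $\nu$).

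The one genuine gap is the conversion of $d_1(P_1,P_0)\geq c_1$ into a lower bound on $\|\mathcal{L}_{10}\|_{L_2(P_i)}$. Your proposed route, $d_1\leq\|e^{\mathcal{L}_{10}}-1\|_{L_2(P_0)}$ followed by controlling $e^{\mathcal{L}_{10}}-1$ by $\mathcal{L}_{10}$ ``up to hypercontractive factors,'' does not work: $\|e^{\mathcal{L}_{10}}-1\|_{L_2(P_0)}^2=\chi^2(P_1\|P_0)$, which is $+\infty$ for many pairs of equivalent Gaussians (e.g.\ $\mathcal{N}(0,\sigma_1^2)$ vs.\ $\mathcal{N}(0,\sigma_0^2)$ with $\sigma_1^2>2\sigma_0^2$), and no hypercontractive estimate bounds the $L_2$ norm of $e^{\mathcal{L}_{10}}$ by a function of $\|\mathcal{L}_{10}\|_{L_2}$ since $\mathcal{L}_{10}$ is quadratic with exponential upper tails. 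The inequality you need goes the other way and is obtained as in the paper via Pinsker: $c_1\leq d_1(P_1,P_0)\leq 2\sqrt{K(P_0,P_1)}=2\,\E_{P_0}[-\mathcal{L}_{10}]^{1/2}\leq 2\|\mathcal{L}_{10}\|_{L_2(P_0)}^{1/2}$, whence $\|\mathcal{L}_{10}\|_{L_2(P_0)}\geq c_1^2/4$ (and symmetrically under $P_1$). With that substitution your part (2) closes, and indeed with a better exponent than the statement requires.
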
 
Before we prove this result, let us comment it. 
\paragraph{Comments.} 
Let us note that
\[\mathcal{C}(g)-\mathcal{C}(g^*)\leq \frac{1}{2}d_1(P_1,P_0).\]
Also, in the case where $d_1(P_1,P_0)$ tends to $0$, the excess risk does not measure the difference between $g$ and $g^*$ but the proximity of $P_1$ and $P_0$. The learning error is not sensitive to this scale phenomenon, as witness the following example.   
\begin{exemple}
 Let $\mu\geq 0$, $P_1=\mathcal{N}(\mu,1)$ and $P_0=\mathcal{N}(-\mu,1)$. In this case, for all $a\in \R$
\[\mathcal{R}(\1_{[a,\infty[})=\frac{1}{2}\left (P(0<\xi+\mu<a)+P(a<\xi-\mu<0)\right ),\]
where $\xi\leadsto\mathcal{N}(0,1)$ ; and $d_1(P_1,P_0)\rightarrow 0$ if and only if $\mu\rightarrow 0$ in which case
\[\mathcal{R}(\1_{[a,\infty,[})\rightarrow \frac{1}{2} P(\xi\in [0,|a|]). \]
Under these conditions, the learning error associated with $\1_{[a,\infty,[}$ tends to $0$ only if $a$ tends to $0$. In other words, when $\mu\rightarrow 0$, the learning error makes a difference between the rules  $\1_{[100,\infty,[}$ and $g^*=\1_{[0,\infty,[}$:
\[ \inf_{\mu<50}\mathcal{R}(\1_{[100,\infty[})\geq  \frac{1}{2} P(\xi\in [0,|50|])\approx \frac{1}{4}\] 
while we have
\[\mathcal{C}(\1_{[100,\infty[})-\mathcal{C}(g^*)\leq \frac{1}{2}d_1(P_1,P_0)\leq  \frac{\mu}{\sqrt{2\pi}}.\] 
\end{exemple}
\begin{remarque}\label{rem:distl1}
By definition, is the quantity of interest. The problem with it is that it can gives credit to every given procedure when $d_1(P_1,P_0)$ is sufficiently small. Also, one cannot argue that a rule is never good according to the excess risk. In the preceding example, the procedure $g(x)=\1_{[100,\infty[}(x)$ is uniformly (on say $ |\mu|\leq 50$) inconsistent according to the learning error but not according to the excess risk. 
\end{remarque}

The main consequence of this Theorem has already been used in Section $2.2$. From equation (\ref{majerr}), if $(g_n)_{n\geq 0}$ is a sequence of classification rules such that $\mathcal{R}(g_n)$ tends to zero, then $\mathcal{C}(g_n)-\mathcal{C}(g^*)$ tends to zero. Theorem \ref{theorem-recip}, implies the converse result. \\
\subsection{Proof of Theorem \ref{theorem-recip}}
\begin{proof}
Let us take 
\[K_1=\{x\in \R^p\;:\; g(x)\neq 1 \text{ et }g^*(x)=1\}\]
and 
\[K_0=\{x\in \R^p\;:\;g(x)\neq 0 \;\text{ et } g^*(x)=0\}.\]
Also, $\mathcal{R}(g)=\frac{1}{2}\left( P_1(K_1)+P_0(K_0)\right )$ and at least one of the following two inequalities is satisfied (from the pigeonhole principle): 
\[P_1(K_1)\geq \mathcal{R}(g),\;\;\;\; P_0(K_0)\geq \mathcal{R}(g).\]
Without loss of generality we will suppose that $P_1(K_1)\geq \mathcal{R}(g)$ which implies $P_1(K_1)+P_0(K_1)\geq \mathcal{R}(g)$. Note that we have 
\begin{align*}
\mathcal{C}(g)-\mathcal{C}(g^*)&=P(g\neq Y)-P(g^*\neq Y)\\
&=\frac{1}{2}\left (P_1(K_1)-P_1(K_0)\right )+\frac{1}{2}\left (P_0(K_0)-P_0(K_1)\right )\\
&(\text{ by conditioning with respect to }Y)\\
&=\frac{1}{2}\left ((P_1-P_0)(K_1)+(P_0-P_1)(K_0)\right ),
\end{align*}
and, because $g^*(X)=1$ if and only if $dP_1\geq dP_0$ (by definition of $g^*$ and from the fact that $Y\leadsto\mathcal{U}(\{0,1\})$), we get
\begin{equation}\label{duriet}
\mathcal{C}(g)-\mathcal{C}(g^*)=\frac{1}{2}\int 1_{K_1\cup K_0}|dP_1-dP_0|\geq \frac{1}{2}\int 1_{K_1}|dP_1-dP_0|.
\end{equation}
A straightforward calculation  (see for example \cite{Girard:2008wd} Proposition 1.4.2 Chapter 1 Part I) leads to 
\[\int_{\X}m(x)(dP_1-dP_0)=2\E_{P}\left [m(X)e^{f_{10}(P,X)}|\sinh\left (\frac{1}{2}\mathcal{L}_{10}(X)\right ) |\right ],\]
for all measurable $m$, where $P$ is any probability measure that dominates $P_1$ and $P_0$, $f_{10}(P,X)=\frac{1}{2}\log(\frac{dP_1}{dP}\frac{dP_0}{dP})$ and $\mathcal{L}_{10}(x)=\log(\frac{dP_1}{dP_0}(x))$. In particular
\[d_1(P_1,P_0)=2\E_{P}\left [e^{f_{10}(P,X)}|\sinh\left (\frac{1}{2}\mathcal{L}_{10}(X)\right ) |\right ],\]

Also note that whenever $K\subset\{x\in \R^p:\mathcal{L}_{10}(x)\geq 0\}$ we have
\[
P_1(K)-P_0(K)=2\E_{P}[1_{K}e^{f_{10}(P,X)}\sinh(\mathcal{L}_{10}(X)/2)],
\]
and as a consequence, (\ref{duriet}) can be rewritten
\begin{equation}\label{dfghl}
\mathcal{C}(g)-\mathcal{C}(g^*)\geq \E[1_{K_1}(X)e^{f_{10}(P,X)}\sinh(\mathcal{L}_{10}(X)/2)].
\end{equation}
It can also be shown that 
\[
P_1(K)+P_0(K)=2\E_{P}[1_{K}e^{f_{10}(P,X)}\cosh(\mathcal{L}_{10}(X)/2)],
\]
and consequently, $P_1(K_1)+P_0(K_1)\geq \mathcal{R}(g)$ is rewritten 
\begin{equation}\label{equ-conda}
2\E_{P}[1_{K_1}(X)e^{f_{10}(P,X)}\cosh(\mathcal{L}_{10}(X)/2)]\geq \mathcal{R}(g).
\end{equation}
On the other hand, $d_1(P_1,P_0)\geq c_1$ leads to:
 \begin{equation}\label{equ-condaa}
2\E_{P}[e^{f_{10}(P,X)}|\sinh(\mathcal{L}_{10}(X)/2)|]\geq c_1.
\end{equation}

In the rest of the proof, we shall combine (\ref{equ-conda}) and (\ref{equ-condaa}) in order to lower bound the right member of  (\ref{dfghl}). We remark that the left member in  (\ref{equ-conda}) and the right member of  (\ref{dfghl}) only differ by a factor two  and replacing a $\sinh$ by a  $\cosh$. For our purpose, these two functions only differ fundamentally near zero. We are going to decompose $K_1$ into two disjoint sets. Also, we will define
\[K_1^+=\{x\in K_1\;:\; \mathcal{L}_{10}(x)\geq 2\} \;\text{ et }\;K_1^-=\{x\in K_1\;:\; \mathcal{L}_{10}(x)\leq 2\} .\]
Let us also define $A$ and $B$ by:
\begin{align*}
\int_{K_1}e^{f_{10}(P,x)}\sinh(\mathcal{L}_{10}(x)/2)P(dx)=&\underbrace{\int_{K_1^+}e^{f_{10}(P,x)}\sinh(\mathcal{L}_{10}(x)/2)P(dx)}_{A}\\
&+\underbrace{\int_{K_1^-}e^{f_{10}(P,x)}\sinh(\mathcal{L}_{10}(x)/2)P(dx)}_{B}.
\end{align*}
From (\ref{equ-conda}), (and the pigeonhole principle) two cases can occur. In the first case 
\[\E_P[1_{K_1^+}(X)e^{f_{10}(P,x)}\cosh(\mathcal{L}_{10}(X)/2)]\geq \mathcal{R}(g)/4,\]
and in the second
\begin{equation}\label{domenisu}
\E_P[1_{K_1^-}(X)e^{f_{10}(P,x)}\cosh(\mathcal{L}_{10}(X)/2)]\geq \mathcal{R}(g)/4. 
\end{equation}
In the first case, because $X\in K_1^+$ implies
\[\sinh(\mathcal{L}_{10}(X)/2)\geq \frac{1}{2}\cosh(\mathcal{L}_{10}(X)/2)\;\; (\ln(6)\leq 2), \]
we have $A\geq \mathcal{R}(g)/8$ and hence the desired result ( it suffices to remark that $\mathcal{L}_{10}(x)\geq 0$ if $x\in K_1$ which implies $B\geq 0$).\\

 We shall now consider the case where (\ref{domenisu}) is satisfied. In this case, because $\cosh(x)\leq 2$ for all $|x|\leq 1$, we have 
\[
\int_{K_1^-}e^{f_{10}(P,x)}P(dx)\geq \mathcal{R}(g)/8.
\]
Also, the definition
\[d\nu=\frac{e^{f_{10}(P,x)}dP}{\int e^{f_{10}(P,x)}dP},\]
makes $\nu$ a probability measure on $\R^p$ and 
\begin{equation}\label{equ-condb}
\nu(K_1^-)\geq \mathcal{R}(g)/8.
\end{equation} 
On the other hand, (see the definition of $f_{10}$)
\[\int e^{f_{10}(P,x)}dP=\int \sqrt{dP_1dP_0}=A_2(P_1,P_0)\]
($A_2(P_1,P_0)$ is the Hellinger affinity between $P_1$ and $P_0$)
which leads to
\begin{equation}\label{defB}
B=A_2(P_1,P_0)\int_{0}^{\infty}\nu\left (X\in K_1^-\;\text{ and }|\sinh(\mathcal{L}_{10}(X)/2)|\geq t\right ) dt.
\end{equation}
 We have
\[\nu(X\in K_1^-)=\nu\left (X\in K_1^-\text{ and }|\sinh(\mathcal{L}_{10}/2)|\leq t\right )\]
\[\hspace{2cm}+\nu\left (X\in K_1^-\text{ and }|\sinh(\mathcal{L}_{10}/2)|\geq t\right ).\]
Let  $g$ be the application which associates to $t>0$ the real 
\begin{equation}\label{defgdet}
g(t)=\sup_{(P_1,P_0)\in \mathcal{P}(c_1)}\nu(|\sinh(\mathcal{L}_{10}(X)/2)|\leq t).
\end{equation} 
For every $t>0$, we have:
\begin{align*}
\nu&\left (X\in K_1^-\text{ and }|\sinh(\mathcal{L}_{10}/2)|\geq t\right )\\
&=\nu(X\in K_1^-)-\nu\left (X\in K_1^-\text{ and }|\sinh(\mathcal{L}_{10}/2)|\leq t\right )\\
\end{align*}

We then deduce from this inequality and from (\ref{defB}) that for all $\epsilon\geq 0$,   
\begin{align*}
B&\geq A_2(P_1,P_0) \int_{0}^{\epsilon}\nu\left (X\in K_1^-\;\text{ and }|\sinh(\mathcal{L}_{10}(X)/2)|\geq t\right ) dt\\
&\geq \epsilon\nu(X\in K_1^-)-A_2(P_1,P_0)\int_{0}^{\epsilon}\nu\left (X\in K_1^-\text{ and }|\sinh(\mathcal{L}_{10}/2)|\leq t\right )dt )\\
&\geq \epsilon \mathcal{R}(g)/8-\int_{0}^{\epsilon}\nu\left (X\in K_1^-\text{ and }|\sinh(\mathcal{L}_{10}/2)|\leq t\right )dt A_2(P_1,P_0)
\end{align*}
where this last inequality results from (\ref{equ-condb}).
The rest of the proof relies on the following lemma. 
\begin{Lemme}\label{lemmetemp}
\begin{enumerate}
\item The application $g$ defined by (\ref{defgdet}) leads to 
\[g(t)\leq \frac{c(c_1)}{A_{2}(P_1,P_0)}t^{1/7}\]
($c(c_1)$ is a positive constant that only depends on $c_1$). 
\item In the case where $C_1=C_0=C$, we have   
\[\nu\left (X\in K_1^-\text{ and }|\sinh(\mathcal{L}_{10}/2)|\leq t\right )\leq \frac{4t}{\sqrt{2\pi}\|C^{-1/2}m_{10}\|_{\R^p}}.\]
\end{enumerate}
 
\end{Lemme}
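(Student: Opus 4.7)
I would first identify the measure $\nu$ explicitly. When $C_1=C_0=C$, completing the square in the exponent of $\sqrt{dP_1\,dP_0}$ shows that this geometric mean of Gaussian densities is, up to normalization, the Gaussian density with mean $(\mu_1+\mu_0)/2=s_{10}$ and covariance $C$; the normalizing constant is precisely $A_2(P_1,P_0)$, so $\nu=\gamma_{C,s_{10}}$. Since $\mathcal{L}_{10}(x)=\langle F_{10},x-s_{10}\rangle_{\R^p}$ with $F_{10}=C^{-1}m_{10}$, the random variable $\mathcal{L}_{10}(X)$ is centred Gaussian under $\nu$ with variance $\langle CF_{10},F_{10}\rangle_{\R^p}=\|C^{-1/2}m_{10}\|_{\R^p}^2$. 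The elementary inequality $\sinh(x)\geq x$ for $x\geq 0$ gives $|\sinh(u/2)|\geq |u|/2$, so $\{|\sinh(\mathcal{L}_{10}/2)|\leq t\}\subset\{|\mathcal{L}_{10}|\leq 2t\}$; bounding the one-dimensional standard Gaussian density by $1/\sqrt{2\pi}$ on the resulting interval yields the announced estimate $\frac{4t}{\sqrt{2\pi}\|C^{-1/2}m_{10}\|_{\R^p}}$, and restricting further to $X\in K_1^-$ only decreases this probability.

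\textbf{Part 1, the general case.} The same density calculation shows that $\nu$ is itself a Gaussian measure, with inverse covariance $(C_0^{-1}+C_1^{-1})/2$ and an explicit mean, so that under $\nu$ the log-likelihood $\mathcal{L}_{10}$ is a polynomial of degree two in a Gaussian vector. My plan combines three ingredients. First, the identity $\int|dP_1-dP_0|=2A_2(P_1,P_0)\,\E_\nu[|\sinh(\mathcal{L}_{10}/2)|]$ (a direct instance of the general formula relating $f_{10}$, $\mathcal{L}_{10}$ and the densities already used in the excerpt) supplies the moment lower bound $\E_\nu[|\sinh(\mathcal{L}_{10}/2)|]\geq c_1/(2A_2(P_1,P_0))$. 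Second, a Carbery--Wright style anti-concentration estimate for the degree-two polynomial $\mathcal{L}_{10}$ under the Gaussian $\nu$ yields a bound of the form $\nu(|\mathcal{L}_{10}|\leq s)\leq K(s/\|\mathcal{L}_{10}\|_{L^2(\nu)})^{1/2}$ with $K$ an absolute constant. Third, an upper control of $\E_\nu[|\sinh(\mathcal{L}_{10}/2)|]$ in terms of $\|\mathcal{L}_{10}\|_{L^2(\nu)}$: the pointwise bound $|\sinh(u/2)|\leq \tfrac{|u|}{2}e^{|u|/2}$ combined with Cauchy--Schwarz gives
\[
\frac{c_1}{2A_2(P_1,P_0)}\leq \frac{1}{2}\|\mathcal{L}_{10}\|_{L^2(\nu)}\bigl(\E_\nu[e^{|\mathcal{L}_{10}|}]\bigr)^{1/2},
\]
and the exponential moment $\E_\nu[e^{|\mathcal{L}_{10}|}]$ is controlled by computing R\'enyi-type affinities of the Gaussian pair $(P_1,P_0)$ in terms of $A_2(P_1,P_0)$. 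Chaining the three bounds with the reduction $|\sinh(u/2)|\leq t\Rightarrow |u|\leq 2t$ and optimising over an intermediate cutoff then produces $g(t)\leq c(c_1)/A_2(P_1,P_0)\cdot t^{1/7}$.

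\textbf{Main obstacle.} The delicate step is the third ingredient: controlling $\E_\nu[e^{|\mathcal{L}_{10}|}]$ uniformly over $(P_1,P_0)\in \mathcal{P}(c_1)$, since $\mathcal{L}_{10}$ is quadratic in a Gaussian vector and its exponential may fail to be integrable unless the quadratic part $A_{10}$ is suitably small. Tracking the precise dependence of this exponential moment and of $\|\mathcal{L}_{10}\|_{L^2(\nu)}$ on $A_2(P_1,P_0)$ — so that exactly one factor $1/A_2(P_1,P_0)$ remains in the final prefactor — is what degrades the nominal $t^{1/2}$ rate suggested by Carbery--Wright alone into the modest exponent $t^{1/7}$ appearing in the statement.
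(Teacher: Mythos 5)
Your Part 2 is essentially the paper's own proof: identify $\nu$ as $\gamma_{C,s_{10}}$, observe that $\mathcal{L}_{10}(X)$ is centred Gaussian under $\nu$ with variance $\|C^{-1/2}m_{10}\|_{\R^p}^2$, use $\mathrm{Argsinh}(t)\leq t$ to reduce to $\{|\mathcal{L}_{10}|\leq 2t\}$, and bound the density. Nothing to add there.

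Part 1, however, has a genuine gap at your third ingredient, and it is exactly the obstacle you flag: $\E_\nu[e^{|\mathcal{L}_{10}|}]$ is not finite uniformly over $\mathcal{P}(c_1)$ --- it can be infinite for a fixed admissible pair. Indeed $e^{|\mathcal{L}_{10}|}\leq \frac{dP_1}{dP_0}+\frac{dP_0}{dP_1}$, so $A_2(P_1,P_0)\,\E_\nu[e^{|\mathcal{L}_{10}|}]$ is controlled by $\int (dP_1)^{3/2}(dP_0)^{-1/2}+\int (dP_0)^{3/2}(dP_1)^{-1/2}$; taking $P_1=\mathcal{N}(0,\sigma_1^2)$ and $P_0=\mathcal{N}(0,\sigma_0^2)$ in dimension one with $\sigma_1^2\geq 3\sigma_0^2$ makes the first integral diverge while $d_1(P_1,P_0)$ stays bounded away from $0$. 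So the chain ``moment lower bound, then Cauchy--Schwarz against the exponential moment, then Carbery--Wright under $\nu$'' cannot be closed. The paper sidesteps both difficulties in one stroke: writing $d\nu=(dP_1/dP_0)^{1/2}dP_0/A_2(P_1,P_0)$, a single Cauchy--Schwarz in $L^2(P_0)$ gives $\nu(B)\leq P_0(B)^{1/2}/A_2(P_1,P_0)$, which transfers the small-ball problem from $\nu$ to $P_0$ (under which $\mathcal{L}_{10}$ is still a quadratic Gaussian chaos) and produces the prefactor $1/A_2(P_1,P_0)$ for free; and the needed lower bound on $\|\mathcal{L}_{10}\|_{L_2(P_0)}$ follows in one line from Pinsker's inequality, $c_1\leq d_1(P_1,P_0)\leq 2\sqrt{K(P_0,P_1)}\leq 2\|\mathcal{L}_{10}\|_{L_2(P_0)}^{1/2}$, with no exponential moments at all. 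Finally, the exponent $1/7$ is not a degradation caused by exponential-moment bookkeeping as you speculate: it is the exponent of the paper's own anti-concentration estimate for quadratic chaoses (Theorem \ref{th:formquadra}) halved by the square root coming from the Cauchy--Schwarz step; if you invoked the genuine Carbery--Wright bound $\epsilon^{1/2}$ under $P_0$ in the same scheme you would get $t^{1/4}$.
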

We prove this result at the end of the current proof. Let us note that  it is equation  (\ref{equ-condaa}) that plays a crucial role in the proof. \\
 
In the case where $C_1\neq C_2$,
\[\int_{0}^{\epsilon}\nu\left (X\in K_1^-\text{ and }|\sinh(\mathcal{L}_{10}/2)|\leq t\right )dt A_2(P_1,P_0)\leq \tilde{c}(c_1)\epsilon^{1+1/7},\]
and the choice $\epsilon=\left (\frac{\mathcal{R}(g)}{16}\tilde{c}(c_1)\right )^{7}$ leads to the desired result. In the case where $C_1=C_2$,
\[\int_{0}^{\epsilon}\nu\left (X\in K_1^-\text{ and }|\sinh(\mathcal{L}_{10}/2)|\leq t\right )dt\leq \frac{2\epsilon^2}{\sqrt{2\pi}\|C^{-1/2}m_{10}\|_{\R^p}},\]
and the choice $\epsilon=\sqrt{2\pi}\|C^{-1/2}m_{10}\|_{\R^p}\frac{\mathcal{R}(g)}{32A_2(P_1,P_0)}$ leads to the desired result. Indeed, in the case where $C_1=C_0$, classical calculation leads to  
\[A_2(P_1,P_0)=\int e^{f_{10}(P,X)}dP =e^{-\frac{\|C^{-1}(\mu_1-\mu_0)\|_{\R^p}^2}{8}}.\] 
\end{proof}
Let us now prove Lemma (\ref{lemmetemp})
\begin{proof}
Let us begin by point $2$. It is sufficient to notice that if $P_{1|0}$ is a gaussian measure with covariance $C$ and mean $s_{10}$, and if $X$ is a random variable drawn from $P_{1|0}$, then 
\[e^{f_{10}(P_{1|0},X)}=e^{-\frac{\|C^{-1}(\mu_1-\mu_0)\|_{\R^p}^2}{8}} \text{ in distribution }\mathcal{L}_{10}(X)\leadsto \mathcal{N}(0,\sigma^2),\]
where $\sigma^2=\|C^{-1}(\mu_1-\mu_0)\|_{\R^p}^2$. Also, we get
\begin{align*}
\nu(|\sinh(\mathcal{L}_{10}(X)/2)|\leq t) &=P\left (|\mathcal{N}(0,\sigma^2)|\leq 2Argsinh(t)\right )\leq \frac{4Argsinh(t)}{\sqrt{2\pi}\sigma}\\
&\leq \frac{4t}{\sqrt{2\pi}\sigma}.
\end{align*}

Let us now prove point $1$ of the Lemma. 
\begin{align*}
\nu(|\sinh(\mathcal{L}_{10}(X)/2)|\leq t)&\leq \int 1_{|\sinh(\mathcal{L}_{10}(x)/2)|\leq t}\left (\frac{dP_1}{dP_0}\right )^{1/2}dP_0/A_2(P_1,P_0).\\
&\leq \frac{P_0^{1/2}(|\mathcal{L}_{10}(X)/2|\leq t)}{A_{2}(P_1,P_0)} \\
&\text{ (from Cauchy-Schwartz and }Argsh(y)\geq y).
\end{align*}
 Finally, we conclude from point $2$ of Theorem \ref{th:formquadra}, given in Section \ref{proofth2}, which hypothesis is satisfied since:  
 \begin{align*}
 c_1&\leq d_1(P_1,P_0)\\
	&\leq 2\sqrt{K(P_0,P_1)} \\
	& (\text{from Pinsker inequality (see \cite{Tsybakov:2004fk})}),\\
 &\leq 2\|\mathcal{L}_{10}\|^{1/2}_{L_2(P_0)}\\
 & (\text{from Cauchy-Schartz inequality}).
 \end{align*}
\end{proof}

%%%%%%%%%%%%%%%%%%%%%%%%%%%%%%%%%%%%%%%%%%%%%%%%%%%%%%%
\section{A geometrical Analysis of LDA to solve Problem \ref{Pb1}}\label{proofth1}
\subsection{Introduction and first result}
Let $\X$ be a separable Banach space $\X=\R^p$, endowed with its Borel $\sigma$-field and a gaussian measure $\gamma$. Throughout the next section, we will associate to any measurable $f$ the set
\begin{equation}\label{de:rg}
V_{f}=\{x\in \X \;\; :\;\; f(x)\geq 0 \}.
\end{equation}

In this section $\X=\R^p$. Recall that $\alpha$ (defined by (\ref{alpha})) is the angle, according to the geometry of $L_2(\gamma_{C})$ between $F_{10}$ et $\hat{F}_{10}$. This quantity will play a very important role in the whole section. In order to  shorten the notation, we will replace $\mathcal{R}(\1_{\hat{V}})$ by $\mathcal{R}$ in this section and those that follow. \\ 
\indent Recall that 
\[
F_{10}=C^{-1}m_{10},\;\;\; m_{10}=\mu_1-\mu_0,\;\; s_{10}=\frac{\mu_1+\mu_0}{2},
\]
where $\mu_1$, (resp. $\mu_0$) and $C$ are the mean and (common) covariance of the distribution $P_1=\gamma_{C,\mu_1}$ (resp. $P_0=\gamma_{C,\mu_0}$) of data from group $1$ (resp. $0$). With the above defined notation (\ref{de:rg}), the optimal rule and the plug-in rule can be rewritten with 
\[V=V_{\langle F_{10},x-s_{10}\rangle_{\R^p}}\;\;\text{ and }\; \hat{V}=V_{\langle \hat{F}_{10},x-\hat{s}_{10}\rangle_{\R^p}}\]

For the purpose of this section, let us note that the learning error studied in the preceding section and introduced by equation (\ref{errapprr}) is (in the case of LDA) 
\[
\mathcal{R}=\frac{1}{2}\left ( \gamma_{C,\mu_0} \left (X \in \hat{V} \setminus V \right )+\gamma_{C,\mu_1} \left (X \in V \setminus \hat{V}\right )\right ).
\]
which implies
\begin{equation}\label{defrisque2}
\mathcal{R}=\frac{1}{2}\left ( \gamma_{C,s_{10}} \left (X \in \left (\hat{V} \setminus V -\frac{m_{10}}{2} \right )\right )+\gamma_{C,s_{10}} \left (X \in \left (V \setminus \hat{V}+\frac{m_{10}}{2}\right ) \right )\right ).
\end{equation}
The Problem now becomes to that of measuring two areas of $\R^p$ with $\gamma_{C,s_{10}}$. Standard properties of gaussian measure now leads to
\begin{equation}\label{tmpp1}
\mathcal{R}=\frac{1}{2}\gamma_p\left ((V_{\langle .,G_p\rangle_{\R^p} }\setminus V_{\langle .,G_p+e_p\rangle_{\R^p} +d_0})-\frac{G_p}{2}\right )
\end{equation}
\[\hspace{2cm}+\frac{1}{2}\gamma_{p}\left ((V_{\langle .,G_p+e_p\rangle_{\R^p} +d_0}\setminus V_{\langle .,G_p\rangle_{\R^p} })+\frac{G_p}{2}\right ),\]
where $d_0=\langle \hat{F}_{10};\hat{s}_{10}-s_{10}\rangle_{\R^p}$, 
\begin{equation}\label{defqte}
G_p=C^{1/2}F_{10}=C^{-1/2}m_{10},\;\; \hat{G}_p=C^{1/2}\hat{F}_{10} \;\text{ and }\; e_p=C^{1/2}(\hat{F}_{10}-F_{10}).
\end{equation} 
One may note that the change of geometry implies 
\begin{equation}\label{geomcha}
\|G_p\|_{\R^p}=\|F_{10}\|_{L_2(\gamma)},\;\; \|\hat{G}_p\|_{\R^p}=\|\hat{F}_{10}\|_{L_2(\gamma)},\;\;\|e_p\|_{p}=\|F_{10}-\hat{F}_{10}\|_{L_2(\gamma_C)}, 
\end{equation} 
and $\alpha$ (defined by equation (\ref{alpha})) is the angle, in the geometry of $\R^p$ between $G_p$ and $\hat{G}_p$.\\

\indent The following theorem gives lower bounds and upper bounds on the learning error $\mathcal{R}$ as functions of (among others) $\alpha$. Its proof relies on the fact that $\mathcal{R}$ is the measure by $\gamma_2$ of two "simple" areas of $\R^p$ (see Figure \ref{fig:lemprin}) and the use of four elementary properties of gaussian measure to be given later (see Figure \ref{fig:pro}).
\begin{theoreme}\label{the:1}
Let $d_0=\langle \hat{F}_{10},\hat{s}_{10}-s_{10}\rangle_{\R^p}$. The Learning error $\mathcal{R}$ as a function of $\alpha$ satisfies:
\[ \forall \alpha\in [-\pi,\pi] \;\;\; \mathcal{R}(\alpha)=\mathcal{R}(-\alpha).\]

The Learning error also satisfies the following inequality\\ 

If $\alpha\geq \frac{\pi}{2}$, then $\mathcal{R}\geq \frac{1}{2}$.\\

If $0\leq \alpha<\frac{\pi}{2}$, then we have $\mathcal{R}\leq \frac{1}{2}$ and we distinguish between four cases.
\begin{enumerate}
\item If $|d_0|\leq \frac{1}{4}|\langle F_{10} ,\hat{F}_{10} \rangle_{L_2(\gamma_C)}|$, we have: 
\begin{equation}\label{eque:th12}
e^{-\frac{\|F_{10}\|_{L_2(\gamma_C)}^2}{8}}\frac{1}{4} \left (\frac{\alpha}{2\pi}+\frac{1}{2}\gamma_1\left (\left [0;\frac{|d_0|\tan(\alpha)}{\|\Pi_{F_{10}^{\bot}}\hat{F}_{10}\|_{L_2(\gamma_C)}} \right ]\right )\right) \leq \mathcal{R},
\end{equation}
and 
\begin{equation}\label{eque:th1}
\mathcal{R}\leq e^{-\frac{\|F_{10}\|_{L_2(\gamma_C)}^2\cos(\alpha)^2}{32}}\left (\frac{\alpha}{2\pi}+\gamma_1\left (\left [0;\left (1+\tan(\alpha)\right )\frac{|d_0|\tan(\alpha)}{\|\Pi_{F_{10}^{\bot}}\hat{F}_{10}\|_{L_2(\gamma_C)}} \right ]\right )\right).
\end{equation}
\item If $\frac{1}{4}|\langle F_{10} ,\hat{F}_{10} \rangle_{L_2(\gamma_C)}| <  |d_0|\leq \frac{1}{2}|\langle F_{10} ,\hat{F}_{10} \rangle_{L_2(\gamma_C)}|$, we have: 
\begin{equation}
 e^{-\frac{\|F_{10}\|_{L_2(\gamma_C)}^2}{2}} \frac{1}{4}\left (\frac{1}{2}\gamma_1\left (\left [0;\frac{\|F_{10}\|_{L_2(\gamma_C)}}{4}\right ]\right )+\frac{\alpha}{2\pi} \right )\leq \mathcal{R}
 \end{equation}
 \begin{equation}\mathcal{R} \leq \frac{\alpha}{2\pi}+\gamma_1\left (\left [0;\left (1+\tan(\alpha)\right )\frac{|d_0|\tan(\alpha)}{\|\Pi_{F_{10}^{\bot}}\hat{F}_{10}\|_{L_2(\gamma_C)}}\right ]\right ).
\end{equation}
\item If $\frac{1}{2}|\langle F_{10} ,\hat{F}_{10} \rangle_{L_2(\gamma_C)}| < |d_0|$, we have:
\begin{equation}\label{eque:th02}
\frac{\alpha}{4\pi}+\frac{1}{4}\gamma_1\left (\left [0;\frac{\|F_{10}\|_{L_2(\gamma_C)}}{2}\right ]\right )\leq \mathcal{R},\end{equation}  
\[\mathcal{R}\leq \frac{\alpha}{2\pi}+\gamma_1\left (\left [0;(1+\tan(\alpha))\frac{|d_0|\tan(\alpha)}{\|\Pi_{F_{10}^{\bot}}\hat{F}_{10}\|_{L_2(\gamma_C)}}\right ]\right ).\]
\item If $|d_0|=0$, then we have 
\begin{equation}\label{equth1e}
 e^{-\frac{\|F_{10}\|^2_{L_2(\gamma_C)}}{8}}\frac{\alpha}{2\pi}\leq \mathcal{R}.
\end{equation}
\end{enumerate}
\end{theoreme}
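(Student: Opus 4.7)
The plan is to reduce the $p$-dimensional measure computation to a two-dimensional one and then estimate the measure of certain wedge-shaped regions geometrically.

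The first step is to reduce to dimension two. Both hyperplanes $\{\langle x,G_p\rangle = 0\}$ and $\{\langle x,G_p+e_p\rangle = -d_0\}$ depend only on the plane $\Pi$ spanned by $G_p$ and $\hat G_p$. Writing $\R^p = \Pi \oplus \Pi^{\perp}$ and using that $\gamma_p$ factorises as $\gamma_2\otimes \gamma_{p-2}$, each of the two regions appearing in \eqref{tmpp1} has the form $W\times \Pi^{\perp}$ (translated by $\pm G_p/2 \in \Pi$). Hence $\mathcal{R}$ equals a sum of two $\gamma_2$-measures of planar sets. After rotating $\Pi$ so that $G_p$ lies along the first axis, we are in the following situation: $V\cap\Pi$ is the half-plane $\{x_1\ge 0\}$, $\hat V\cap\Pi$ is the half-plane obtained by rotating the boundary by the angle $\alpha$ (with the correct orientation) and translating it so that the boundary line crosses the $G_p$-axis at a point whose abscissa is determined, up to a factor $\cos\alpha$, by $d_0/\|\hat G_p\|$. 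The invariance $\mathcal{R}(\alpha)=\mathcal{R}(-\alpha)$ then follows from the reflection symmetry of $\gamma_2$ together with the fact that $-\alpha$ corresponds to reflecting $\hat G_p$ across the $G_p$-axis, which also swaps the roles of the two translated wedges $\pm G_p/2$.

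The case $\alpha\ge \pi/2$ is immediate: in this regime one of the two translated half-planes $\hat V\pm m_{10}/2$ contains the half-plane $\{x_1\ge 0\}$ (or its complement), so one of the two $\gamma_p$-terms in $\mathcal{R}$ is already $\ge 1/2$. For $0\le\alpha<\pi/2$, the symmetric difference $V\Delta \hat V$ is, in the $\Pi$-plane, the union of two opposite wedges with vertex at the crossing point of the two lines. Write $h = d_0/\|\hat G_p\cos\alpha\|$-like quantity for this vertex abscissa; each wedge has angular opening $\alpha$, and the decisive point is that the measures we need are
\[
\gamma_2\bigl((W_+ - G_p/2)\bigr)\quad\text{and}\quad \gamma_2\bigl((W_- + G_p/2)\bigr),
\]
where $W_\pm$ are the two wedges. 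The four elementary lemmas alluded to in the announcement will provide: (i) the exact $\gamma_2$-measure of a wedge of opening $\alpha$ with vertex at the origin, namely $\alpha/(2\pi)$; (ii) bounds for the measure of a wedge with a shifted vertex $(h,0)$, obtained by decomposing the wedge into a wedge at the origin plus a triangular strip of width $\sim h$ and integrating the density $(2\pi)^{-1}e^{-r^2/2}$; (iii) the effect of a translation of the ambient gaussian by a vector collinear with the bisector, producing the factor $e^{-\|G_p\|^2/8}$ (lower bound) or $e^{-\|G_p\|^2\cos^2\alpha/32}$ (upper bound, coming from restricting to the half of the wedge that is furthest from $G_p/2$); (iv) the strip estimate $\gamma_1([0,t]) \le t/\sqrt{2\pi}$ used to convert a length $|d_0|\tan\alpha/\|\Pi^\perp_{F_{10}}\hat F_{10}\|$ into the additional $\gamma_1(\cdot)$ term.

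The case split in $|d_0|$ reflects the geometry of the vertex $h$: when $|d_0|\le \frac14|\langle F_{10},\hat F_{10}\rangle|$ the vertex sits well inside the ball of radius $\|G_p\|/4$ around the origin, so the wedges lie close to the origin and both bounds \eqref{eque:th12}-\eqref{eque:th1} follow by combining (i), (iii), (iv); in the intermediate regime $|d_0|\le \tfrac12|\langle F_{10},\hat F_{10}\rangle|$ the lower bound loses the sharp exponential factor but the upper bound still proceeds via (ii)-(iv); in the regime $|d_0|>\tfrac12|\langle F_{10},\hat F_{10}\rangle|$ the lower bound is obtained directly from the fact that one of the two wedges contains a strip of width $\|G_p\|/2$ centred on $G_p/2$, whence the term $\gamma_1([0,\|F_{10}\|_{L_2(\gamma_C)}/2])$; finally the case $d_0=0$ is simply (i) combined with (iii). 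The main obstacle will be the careful bookkeeping in step (ii)-(iii) so as to produce exactly the constants $1$, $1+\tan\alpha$ and $\cos^2\alpha/32$ stated in the theorem, together with the pigeon-hole argument that distributes the two wedges between $\pm G_p/2$ in order to obtain matching lower and upper bounds; rotational reduction aside, all the analytic content lives in these two-dimensional estimates.
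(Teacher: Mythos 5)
Your sketch follows the paper's own proof essentially step for step: reduction to the two-dimensional plane spanned by $G_p$ and the component of $e_p$ orthogonal to it via the tensorisation of $\gamma_p$, identification of $\mathcal{R}$ as the $\gamma_2$-measure of two opposite wedges translated by $\pm G_p/2-a_p$, and then the same four elementary gaussian facts (rotational invariance giving $\alpha/2\pi$ for an angular sector, the two exponential translation estimates, and the infinite-rectangle measure $\tfrac12\gamma_1([0,d])$), with the case split on $|d_0|$ corresponding exactly to the position of the translated vertex relative to the wedge regions. The remaining bookkeeping you flag is precisely the content of the paper's Lemma \ref{lemprinc}, so the approach is correct and matches the paper's.
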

\begin{proof}
\textit{Step 1: The  problem is two dimensional}
We shall prove this equality:
\begin{equation}\label{finlamt}
\mathcal{R}=\frac{1}{2}\gamma_2\left (Q_-^a-y^+\right )+\frac{1}{2}\gamma_2\left (Q_-^b-y^-\right ),
\end{equation}
where $Q_-^a$, $Q_-^b$, $y_+$ and $y_-$ will be defined below.  $Q_-^a$ and $Q_-^b$ are two areas of $\R^2$, $y_+$ and $y_-$ are two vectors of $\R^2$ and all these quantities are illustrated Figure \ref{fig:lemprin}. 
In the following we shall use the notation $\tilde{e}_p=\Pi_{G_p^{\bot}}e_p$ for the orthogonal projection of $e_p$ on the orthogonal to $G_p$ in $\R^p$. We will suppose that $\|\tilde{e}_p\|_{\R^p}\neq 0$, since the part of the result concerning $\|\tilde{e}_p\|_{\R^p}=0$ is straightforward. The calculation of $\mathcal{R}$ is intrinsically a calculus in the two dimensional space $M_p$, spanned by $G_p$ and $\tilde{e}_p$. In order to make this fact clear, note that for all $z_1\in M_p$ $z_2\in M_p^{\bot}$ we have: \[
V_{\langle .,G_p+e_p\rangle_{\R^p} +d_0}\setminus V_{\langle .,G_p\rangle_{\R^p}}+z_1+z_2= V_{\langle .,G_p+e_p\rangle_{\R^p} +d_0}\setminus V_{\langle .,G_p\rangle_{\R^p}}+z_1
\]
 and 
 \[
 V_{\langle .,G_p\rangle_{\R^p}}\setminus V_{\langle .,G_p+e_p\rangle_{\R^p} +d_0}+z_1+z_2= V_{\langle .,G_p\rangle_{\R^p}}\setminus V_{\langle .,G_p+e_p\rangle_{\R^p} +d_0}+z_1
 \] 
 (here $M_p^{\bot}$ was the orthogonal of $M_p$ in $\R^p$).
 By the tensorial property of $\gamma_p$ and equation (\ref{tmpp1}), we finally get
\begin{eqnarray}\label{reducdim2}
\mathcal{R} & = & \frac{1}{2}\gamma_2\left (M_p\cap(V_{\langle .\, ,G_p+e_p\rangle_{\R^p} +d_0}\setminus V_{\langle .\, ,G_p\rangle_{\R^p}}-\frac{G_p}{2})\right )\\
& & +\frac{1}{2}\gamma_2\left (M_p\cap( V_{\langle .\, ,G_p\rangle_{\R^p}}\setminus V_{\langle .\, ,G_p+e_p\rangle_{\R^p} +d_0}+\frac{G_p}{2})\right ).
\end{eqnarray}
 Also, in the sequel we will identify $M_p$ with $\R^2$, $D$ and $\hat{D}$ will be the straight lines of $M_p$ with equation $\langle .,G_p\rangle_{\R^p}=0$ and $\langle .,G_p+e_p\rangle_{\R^p} +d_0=0$. It can easily be shown that these lines intersect in $a_p$ given by
\begin{equation}\label{defan}
a_p=-d_0\frac{\tilde{e}_p}{\|\tilde{e}_p\|^2_{\R^p}}.
\end{equation}
Also,
\[V_{\langle .\, ,G_p\rangle_{\R^p}}=V_{\langle.\, -a_p,G_p\rangle_{\R^p}}\;\text{ et }\;V_{\langle .\, ,G_p+e_p\rangle_{\R^p} +d_0}=V_{\langle.\, -a_p,G_p+e_p\rangle_{\R^p}},\]
and with the same calculus that was used to obtain (\ref{tmpp1}), equation (\ref{reducdim2}) becomes:
\begin{eqnarray}
\mathcal{R} & =  &\frac{1}{2}\gamma_2\left (M_p\cap(V_{\langle.\, ,G_p+e_p\rangle_{\R^p}}\setminus V_{\langle.\, ,G_p\rangle_{\R^p}})-\frac{G_p}{2}+a_p\right )\\
& & +\frac{1}{2}\gamma_2\left ( M_p\cap (V_{\langle.\, ,G_p\rangle_{\R^p}}\setminus V_{\langle.\, ,G_p+e_p\rangle_{\R^p}})+\frac{G_p}{2}+a_p\right ).
\end{eqnarray}
Notice that for reasons of symmetry we can assume that $d_0\geq 0$ without loss of generality. In the sequel, we shall use the notation 
\begin{equation}\label{ginoru}
y^+=\frac{G_p}{2}-a_p \text{ et } y^-=-\frac{G_p}{2}-a_p, 
\end{equation}
the coordinates of $y^+$ in the orthonormal coordinate system obtained from the orthogonal coordinate system  $(0,\tilde{e}_p,G_p)$ will be noted $(y_h,y_v)$ and are equal $(\frac{d_0}{\|\tilde{e}_p\|_{\R^p}},\frac{\|G_p\|_{\R^p}}{2})$. We shall also note
\begin{equation}
Q_-^a=M_p\cap(V_{\langle.\, ,G_p+e_p\rangle_{\R^p}}\setminus V_{\langle.\, ,G_p\rangle_{\R^p}})\text{ et }Q_-^b= M_p\cap (V_{\langle.\, ,G_p\rangle_{\R^p}}\setminus V_{\langle.\, ,G_p+e_p\rangle_{\R^p}}).
\end{equation}
We finally derive equation (\ref{finlamt}). From Figure \ref{fig:lemprin}, we notice that replacing $\alpha$ by $-\alpha$, $\mathcal{R}$ does not change; that if  $0<\alpha\leq \pi/2$ then $\mathcal{R}\leq \frac{1}{2}$ and if $\pi \geq \alpha\geq \pi/2$ then $\mathcal{R}_p\geq 1/2$. Also, we will now suppose that $\alpha\in [0,\pi/2]$.\\

\textit{Step 2}. The rest of the proof relies on the following lemma.
%\begin{figure}
%    \center
%    \includegraphics[width=10cm]{}
%   \caption{La décomposition du domaine de classification dans le plan $M_n$}
%   \label{shem1}
%\end{figure} 
\begin{figure}
    \center
    \includegraphics[width=10cm]{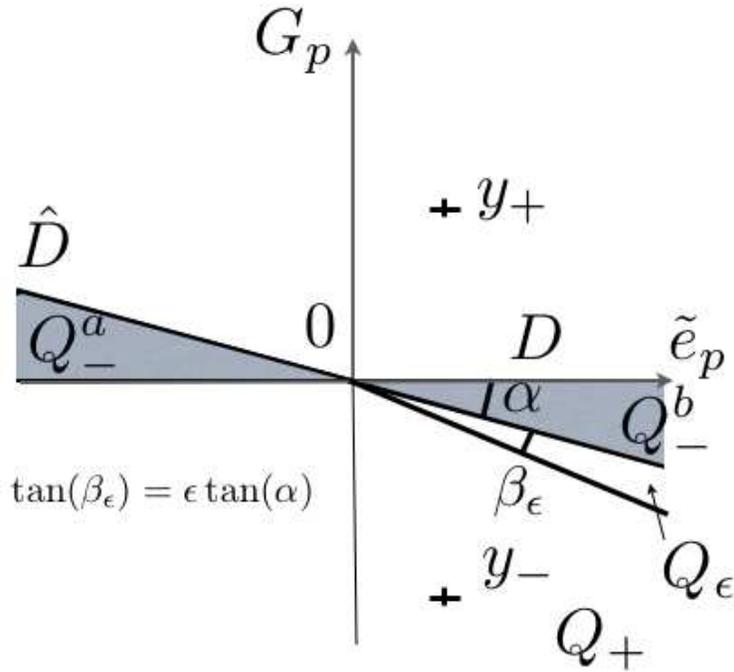}
   \caption{Figure giving the definition of  $Q_{-}^a$, $Q_-^b$, $Q_+$, and $Q_{\epsilon}$ for Lemma \ref{lemprinc}}
   \label{fig:lemprin}
\end{figure}
\begin{Lemme}\label{lemprinc}
Let, $Q_+$ and $Q_{\epsilon}$ be defined by Figure \ref{fig:lemprin} forming, with $Q_-^a$ et $Q_-^b$, a partition of $\R^2$. Let $u=\tan(\alpha)y_h$. We then have 
\begin{itemize}
\item If $y^-\in Q_-$, then
 \[\frac{1}{2}\gamma_1([0;|y_v|])+\frac{\alpha}{2\pi}+\gamma_1([0,\frac{y_v}{2}])\gamma_1\left (\left [0;\left |y_{v}/2\frac{\cos(\alpha)}{\sin(\alpha)}\right |\right]\right ) \leq  \gamma_2(Q_-^b-y^-)\]
\begin{equation}\label{l00}
 \gamma_2(Q_-^b-y^-)\leq \frac{\alpha}{2\pi}+\gamma_1([0;|u|(1+\tan(\alpha))]) ,
\end{equation}
\item If $y^-\in Q_+$, then
\[e^{-\frac{y_v^2}{2}}\frac{1}{2}\left (\frac{1}{2}\gamma_1([0;|u|])+\frac{\alpha}{2\pi}\right )\leq \gamma_2(Q_-^b-y^-)\]
\begin{equation}\label{l01}
\gamma_2(Q_-^b-y^-)\leq e^{-\frac{\epsilon^2y_v^2\cos^2(\alpha)}{2(1+\epsilon)^2}}\left (\gamma_1([0;((1+\tan(\alpha))|u|])+\frac{\alpha}{2\pi}\right ) ,
\end{equation}
\item If $y^-\in Q_{\epsilon}$, then
\[
e^{-\frac{(1+\epsilon)^2|u|^2}{2}}\frac{1}{2}\left (\frac{1}{2}\gamma_1([0;|u|])+\frac{\alpha}{2\pi}\right )\leq \gamma_2(Q_-^b-y^-)
\]
\begin{equation}\label{l20}
\gamma_2(Q_-^b-y^-)\leq \left (\gamma_1([0;(1+\tan(\alpha))|u|])+\frac{\alpha}{2\pi}\right ).
\end{equation}
\item We have concerning $\gamma_2(Q_-^a-y^+)$: 
\begin{equation}\label{l10}
 \gamma_2(Q_-^a-y^+)\leq \gamma_2(Q_-^b-y^-).
\end{equation}
\item Finally, if $y_h=0$, we have 
\begin{equation}\label{l10000}
e^{-\frac{y_v^2}{2}}\frac{\alpha}{2\pi} \leq \gamma_2(Q_-^a-y^+)= \gamma_2(Q_-^b-y^-).
\end{equation}
\end{itemize}
\end{Lemme}
This Lemma will be proven in Subsection \ref{proflemprin}, let us see how it implies Theorem \ref{the:1}. 
Fix $\epsilon=1$ for the rest of the proof (Other values of $\epsilon$ will help us in the proof of Theorem \ref{the:11}). Equation (\ref{l10}) of the lemma implies that
\[\frac{1}{2}\gamma_{2}(Q_-^b-y^-)\leq \mathcal{R}\leq \gamma_{2}(Q_-^b-y^-).\]
Recall that $(y_h,y_v)$ has been defined following equation (\ref{ginoru}) as the coordinates of $y^+$ and that $u=\tan(\alpha)y_h$.
A simple calculation leads to
\[u=|d_0|\frac{\tan(\alpha)}{\|\Pi_{F_{10}^{\bot}}\hat{F}_{10}\|_{L_2(\gamma_C)}}\text{ et }y_v^2=\frac{ \|F_{10}\|_{L_2(\gamma_C)}^2}{4}.\]
If $\frac{1}{2}|\langle G_p ,\hat{G}_p \rangle_{\R^p}|<|d_0|$, we have in the preceding Lemma $y_-\in Q_{-}$ and:
\[\frac{1}{4}\gamma_1\left (\left [0;\frac{\tan(\alpha)\|F_{10}\|_{L_2(\gamma_C)}}{2}\right ]\right )+\frac{\alpha}{4\pi}\leq \mathcal{R}\]
\[\mathcal{R}\leq \frac{\alpha}{2\pi}+\gamma_1\left (\left [0;(1+\tan(\alpha))\frac{|d_0|\tan(\alpha)}{\|\Pi_{F_{10}^{\bot}}\hat{F}_{10}\|_{L_2(\gamma_C)}}\right ]\right ).\] 
The case where $|d_0|<\frac{1}{4}|\langle G_p ,\hat{G}_p \rangle_{\R^p}|$ (which means that $2|u|<|y_v|$) is the case where $y_-\in Q_+$, and we then have:
\[
e^{-\frac{\|F_{10}\|_{L_2(\gamma_C)}^2}{8}} \frac{1}{4}\left (\frac{\alpha}{2\pi}+\frac{1}{2}\gamma_1\left (\left [0;\frac{|d_0|\tan(\alpha)}{\|\Pi_{F_{10}^{\bot}}\hat{F}_{10}\|_{L_2(\gamma_C)}}\right ] \right )\right) \leq \mathcal{R},
\]
and 
\[
\mathcal{R}\leq e^{-\frac{\|F_{10}\|_{L_2(\gamma_C)}^2\cos(\alpha)^2}{32}}\left (\frac{\alpha}{2\pi}+\gamma_1\left (\left [0;\left (1+\tan(\alpha)\right )\frac{|d_0|\tan(\alpha)}{\|\Pi_{F_{10}^{\bot}}\hat{F}_{10}\|_{L_2(\gamma_C)}}\right ] \right )\right).
\]
If  $\frac{1}{4}|\langle G_p ,\hat{G}_p \rangle_{\R^p}|<|d_0|<\frac{1}{2}|\langle G_p ,\hat{G}_p \rangle_{\R^p}|$, (which means that $2|u|>|y_v|>|u|$) we have in the preceding lemma $y_-\in Q_{\epsilon}$ ($\epsilon=1$), and since in this case $|y_v|>|u|>|y_v|/2$, we get:
\[e^{-\frac{\|F_{10}\|_{L_2(\gamma_C)}^2}{2}} \frac{1}{4}\left (\frac{1}{2}\gamma_1\left (\left [0;\frac{\|F_{10}\|_{L_2(\gamma_C)}}{4}\right ]\right ) +\frac{\alpha}{2\pi}\right ) \leq \mathcal{R}\]
and
\[\mathcal{R}\leq \frac{\alpha}{2\pi}+\gamma_1\left (\left [0;\left (1+\tan(\alpha)\right )\frac{|d_0|\tan(\alpha)}{\|\Pi_{F_{10}^{\bot}}\hat{F}_{10}\|_{L_2(\gamma_C)}}\right ]\right ).\]
This ends the proof of Theorem \ref{the:1}.
\end{proof}
\subsection{Proof of Theorem \ref{the:11}}

Theorem (\ref{the:11}) is also a consequence of the preceding Lemma. We will use the preceding lemma while tuning the value of $\epsilon$. We use without restating them the definitions given before the preceding lemma.\\
 \indent Let us assume that $\frac{2|d_0|}{|\langle F_{10} ,\hat{F}_{10} \rangle_{L_2(\gamma_C)}|}$ has an inferior limit $a<1$. Then, there exists $\epsilon>0$ such that $y^+$ and $y^-$ (defined by (\ref{ginoru})) belong to $Q_+$ (for $\|F_{10}\|_{L_2}\cos(\alpha)$ large enough), then equation (\ref{l01}) implies that
 \[
 \mathcal{R}\leq e^{-\frac{\epsilon^2\|F_{10}\|_{L_2}^2\cos^2(\alpha)}{2(1+\epsilon)^2}}\left (1+\frac{|\alpha|}{2\pi}\right ),
\]
and $\mathcal{R}$ tends to $0$ when $\|F_{10}\|_{L_2}^2\cos^2(\alpha)$ tends to infinity.\\
\indent If now $\frac{2|d_0|}{|\langle F_{10} ,\hat{F}_{10} \rangle_{L_2(\gamma_C)}|}$ tends to $a>1$, then $y^+$ or $y^-$ (given by (\ref{ginoru})) belongs to  $Q_-$  (for $\|F_{10}\|_{L_2}\cos(\alpha)$ large enough). And since in this case   equation (\ref{l00}) leads to
 \begin{equation}\label{midort}
 \mathcal{R}\geq \frac{1}{4}\left (\frac{1}{2}\gamma_1([0;\|F_{10}\|_{L_2}/2])\right .
 \end{equation}
 \[+\left .\gamma_1\left (\left [0;\frac{\|F_{10}\|_{L_2}\cos(\alpha)}{4\sin(\alpha)}\right ]\right )\gamma_1([0;\|F_{10}\|_{L_2}/4])+\frac{\alpha}{2\pi}\right ),\]
we obtain the desired result by letting $\|F_{10}\|_{L_2}$ tend to infinity. One has to observe that $\alpha$ depends on $\|F_{10}\|_{L_2}$ and that the limit values $\alpha=\pi/2$ and $\alpha=0$ require the use of different terms in inequality (\ref{midort}). This ends the proof of Theorem \ref{the:11}.

\subsection{Proof of Lemma \ref{lemprinc}}\label{proflemprin}

This proof is the central part of this section. It is mostly geometrical, and require only is the following four properties  (given by Figure \ref{fig:pro}):
\begin{itemize}
\item \underline{Property $1$}. If $A\subset \R^2$ between the two half straight lines $(0,u)$ and $(0,v)$ such that Angle$(u,v)=\alpha$, then $\gamma_2(A)=\frac{\alpha}{2\pi}$. This result follows directly from rotational invariance of the gaussian measure. Such an area will be called an angular portion of size $\alpha$ and centre $0$. 
\item \underline{Properties $2$ and $3$}. Let $y\in\R^2$, $D$ a straight line of $\R^2$, $b$ the orthogonal projection of $y$ on $D$ and $h$ the distance from $y$ to $D$. If $A\subset \R^2$ and $A$ is included in the half plan delimited by $D$ that does not contain $y$, then $\gamma_2(A-y)\leq e^{-h^{2}/2}\gamma_2(A-b)$. This is property $2$. If $A\subset\R^p$ is included in the half plan delimited by $D$ that contains $y$ then $\gamma_2(A-y)\geq e^{-h^{2}/2}\gamma_2(A-b)$.This is property $3$. 
\item \underline{Property $4$}.  
If $A=[0;d]\times [0;\infty[$ (see Figure \ref{fig:pro}) then $\gamma_2(A)=\frac{1}{2}\gamma_1([0;d])$. Such a rectangle will be called an infinite rectangle of origin $0$ and height $d$. 
\end{itemize}
\begin{figure}
    \center
    \includegraphics[width=15cm]{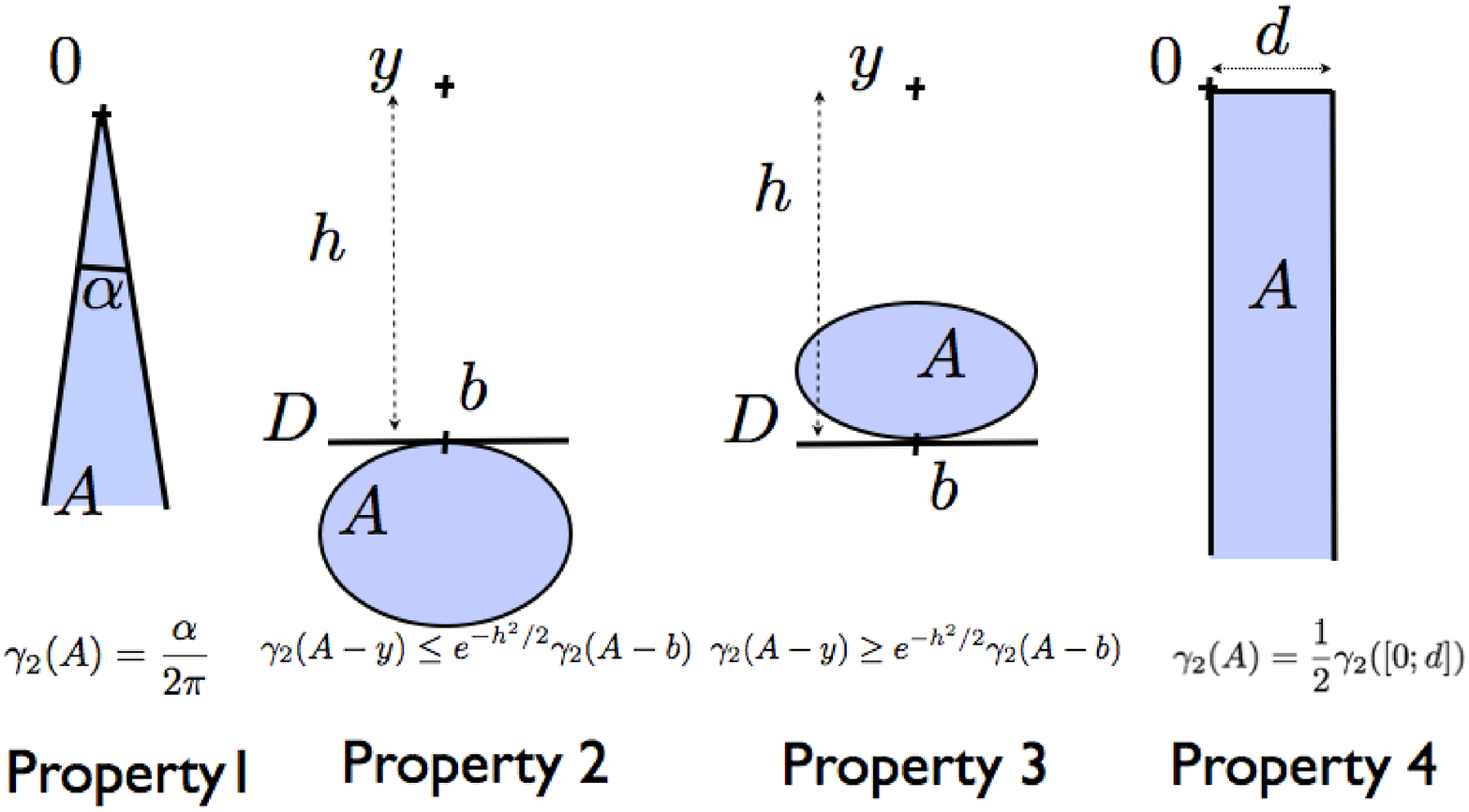}
   \caption{The four properties used in the proof }
   \label{fig:pro}
\end{figure}
 We will note $q$ and $\hat{q}$ the orthogonal projections of $y$ on $D$ and $\hat{D}$.
The properties $2$ and $3$ are well known but for the sake off completeness we recall their proof. It suffices to note that

\[\gamma_2(A-y)=\int_{x\in A}\frac{1}{2\pi}e^{-\frac{\|x-y\|_{\R^2}^2}{2}}dx=e^{-\frac{h^2}{2}}\int_{x\in A}\frac{1}{2\pi}e^{-\frac{\|x-b\|_{\R^2}^2}{2}}e^{\langle x-b,y-b \rangle_{\R^2}}dx , \]
and that $x\in A$ implies $\langle x-b,y-b \rangle_{\R^2}\leq 0$ for property $2$ and $\langle x-b,y-b \rangle_{\R^2}\geq 0$ for property $3$.\\

We are now going to distinguish between a number of cases and, in each of them, use the announced properties.
First note that the inequality concerning $y^+$ is trivial. Figure \ref{fig:pro1} and \ref{fig:lemprin} will be useful in the following.
 \begin{figure}
    \center
    \includegraphics[width=7cm]{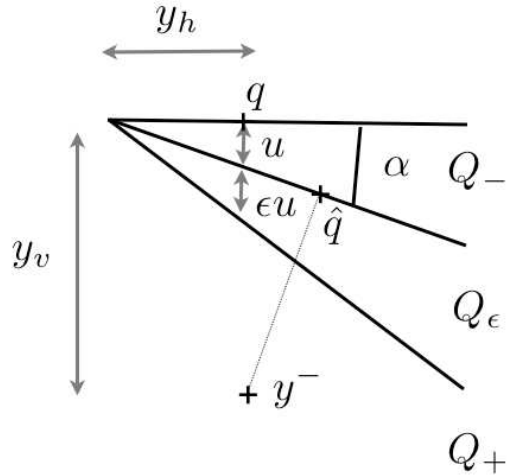}
   \caption{Figure to visualize de proof}
   \label{fig:pro1}
\end{figure}
\paragraph{Case $y^-\in Q_-^b$.} In this case $|y_v|\leq |u|$. One can include in $Q_-^b$ the disjoint union of an infinite rectangle of origin $y^-$, and height $|y_v|$ ; an angular portion of size $\alpha$ and centre $y^-$ ; and a rectangle with vertex $y^-$ height $|y_v|/2$ and length $|y_{v}/2\frac{\cos(\alpha)}{\sin(\alpha)}|$.
Using properties $4$ and $1$, we then get:
\begin{equation}\label{cas11min}
\frac{1}{2}\gamma_1([0;|y_v|])+\frac{\alpha}{2\pi}+\gamma_1([0,\frac{y_v}{2}])\gamma_1\left (\left [0;\left |y_{v}/2\frac{\cos(\alpha)}{\sin(\alpha)}\right |\right]\right )\leq \gamma_2(Q_-^b-y^-).
\end{equation}
On the other hand, $Q_-^b$ can be included in the disjoint union of an angular portioin with centre $y^-$, of two infinite rectangles with height less than or equal to $|u|\tan(\alpha)$ and of two infinite rectangle of height lower or equal to $|u|$. Also, properties $1$ and $4$ imply:
\begin{equation}
\gamma_2(Q_-^b-y^-)\leq \frac{\alpha}{2\pi}+\gamma_1([0;|u|(1+\tan(\alpha))]).
\end{equation}
\paragraph{Case $y^-\in Q_+$.}
In this case $|y_v|>(1+\epsilon)|u|$, $y^-$ is at a distance $|y_v|$ from $D$ and at a distance $(|y_v|-|u|)\cos(\alpha)\geq \frac{\epsilon}{1+\epsilon}|y_v|\cos(\alpha)$ from $\hat{D}$.  Properties $2$ and $3$ imply:
\begin{equation}\label{cas12}
e^{-\frac{y_v^2}{2}}\gamma_2(Q_-^b-q)\leq \gamma_2(Q_-^b-y^-)\leq e^{-\frac{\epsilon^2y_v^2\cos^2(\alpha)}{2(1+\epsilon)^2}}\gamma_2(Q_-^b-\hat{q}).
\end{equation}

One can include in $Q_-^b$ an angular portion of size $\alpha$ and with centre $q$ or an infinite rectangle of origin $y$ and height $|u|$. Also, properties $1$ and $4$ imply, with (\ref{cas12}) and the fact that $\max(a,b)\geq \frac{a+b}{2}$ the equation:
\[\frac{1}{2}\left ( \frac{1}{2}\gamma_1([0;|u|])+\frac{\alpha}{2\pi}\right )\leq \gamma_2(Q_-^b-q).\]
 The set $Q_-^b$ can be included in the union of an angular portion of size $\alpha$ centred  in $\hat{q}$ and of two infinite rectangles of origin $\hat{q}$ and height $|u|(1+\tan(\alpha))$. Also, properties $1$ and $4$ together with (\ref{cas12}) and $\max(a,b)\geq \frac{a+b}{2}$ imply the following equation:

\begin{equation}\label{cas13}
e^{-\frac{y_v^2}{2}}\frac{1}{2}\left ( \frac{1}{2}\gamma_1([0;|u|])+\frac{\alpha}{2\pi}\right )\leq \gamma_2(Q_-^b-y^-),\end{equation}
\[\gamma_2(Q_-^b-y^-)\leq e^{-\frac{ \epsilon^2y_v^2\cos^2(\alpha)}{2(1+\epsilon)^2}}\left ( \gamma_1([0;|u|(1+\tan(\alpha))])+\frac{\alpha}{2\pi}\right ).
\]
\paragraph{Case $y^-\in Q_{\epsilon}$.}
In this case $(1+\epsilon)|u|>|y_v|>|u|$, $y^-$ is at a distance $|y_v|\leq (1+\epsilon)|u|$ from $D$ and at a distance $(|y_v|-|u|)\cos(\alpha)\geq 0$ from $\hat{D}$.  Properties $2$ and $3$ imply
\begin{equation}\label{cas14}
e^{-\frac{(1+\epsilon)^2|u|^2}{2}}\gamma_2(Q_-^b-q)\leq \gamma_2(Q_-^b-y^-)\leq \gamma_2(Q_-^b-\hat{q}).
\end{equation}
from which we deduce the following inequality in the same way as in the preceding paragraph:

\begin{equation}\label{cas15}
e^{-\frac{(1+\epsilon)^2|u|^2}{2}}\frac{1}{2}\left ( \frac{1}{2}\gamma_1\left (\left [0;|u|\right ]\right )+\frac{\alpha}{2\pi}\right )\leq \gamma_2(Q_-^b-y^-),
\end{equation}
\[ \gamma_2(Q_-^b-y^-)\leq \left ( \gamma_1([0;|u|(1+\tan(\alpha))])+\frac{\alpha}{2\pi}\right ).\]
This ends the proof of the Lemma.
%%%%%%%%%%%%%%%%%%%%%%%%%%%%%%%

\begin{remarque}[On log-concave measures]
It is natural to ask which type of probability measure satisfies the four properties used.  Concerning property $2$, it is possible to consider measures that are not gaussian. Suppose that $\mu$ is a probability measure on $\R^p$ with positive density, $ae^{-\phi}$ with respect to the Lebesgue measure, where $\phi$ is strictly convex in the sense that their exists $c>0$ such that for all $x,y\in \R^p$
\begin{equation}\label{convex}
\phi(x)+\phi(y)-2\phi\left (\frac{x+y}{2}\right)\geq \frac{c}{2}\|x-y\|^2_{\R^p},
\end{equation}
$\phi(0)=0=\Arginf\phi$, $a$ is a positive constant and $\phi$ is radial: there exists a function $\psi$ from $\R$ to $\R$ such that $\phi(x)=\psi(\|x\|)$.
  Let $y\in\R^p$, $D$ be a hyperplane of $\R^p$, $b$ the orthogonal projection of $y$ on $D$, $h$ the distance from $y$ to $D$ and $A\subset \R^p$ included in the half space delimited by $D$ which does not contain $y$. One can show (see proposition $3.3.1$ p126 in \cite{Girard:2008wd}) that  
  \[\mu(A-y)\leq e^{-c\frac{h^2}{2}}\mu(A-b).\]
  \end{remarque}
\subsection{Proof of Theorem \ref{the:32}}\label{dthe32}

\begin{proof}
The second equation of the Theorem results directly from equation (\ref{eque:th1}) in Theorem \ref{the:1}. To show the first equation of the Theorem, we will four cases. Case number $4$ is the important one that relies on the use of Theorem \ref{the:1}. The other cases rely on verifying that the right member of the first equation of the Theorem is not too small.
\begin{enumerate} 
% cas1
\item
Case where $\langle \hat{F}_{10},F_{10}\rangle_{L_2(\gamma_C)}<0$.\\
Let us note that because $\mathcal{R}$ is a probability, we have $\mathcal{R}\leq 1$. In addition,  
\[\mathcal{E}\geq \|F_{10}-\hat{F}_{10}\|_{L_2(\gamma_C)}\geq \|F_{10}\|_{L_2(\gamma_C)}.\]
which implies that  $\mathcal{R}_p\leq \frac{\mathcal{E}}{\|F_{10}\|_{L_2(\gamma_C)}}$. \\
% cas2
\item Case where $\langle \hat{F}_{10},F_{10}\rangle_{L_2(\gamma_C)}>0$ and $\|\hat{F}_{10}\|_{L_2(\gamma_C)}\leq \frac{1}{2}\|F_{10}\|_{L_2(\gamma_C)}$.\\
Recall that $\mathcal{R}$ is upper bounded by $\frac{1}{2}$ when $\langle \hat{F}_{10},F_{10}\rangle_{L_2(\gamma_C)}>0$ (see Theorem \ref{the:1}, it is the case where $\alpha$ defined by (\ref{alpha}) satisfies $-\pi/2\leq \alpha\leq \pi/2$). \\
In addition, the inequality $\|\hat{F}_{10}\|_{L_2(\gamma_C)}\leq \frac{1}{2}\|F_{10}\|_{L_2(\gamma_C)}$ implies 
\[\mathcal{E}\geq \frac{1}{2} \|F_{10}\|_{L_2(\gamma_C)},\] 
and as a consequence $\mathcal{R}_p\leq \frac{1}{2}$ implies that $\mathcal{R}_p\leq \frac{\mathcal{E}}{\|F_{10}\|_{L_2(\gamma_C)}}$. 
% cas3
\item Case where $\langle \hat{F}_{10},F_{10}\rangle_{L_2(\gamma_C)}>0$, $\|\hat{F}_{10}\|_{L_2(\gamma_C)}\geq \frac{1}{2}\|F_{10}\|_{L_2(\gamma_C)}$ et $\frac{\pi}{2}>\alpha>\frac{\pi}{4}$ (recall that $\alpha$ has been defined by \ref{alpha}). \\
Since $\frac{\pi}{2}>\alpha>\frac{\pi}{4}$, we have $\cos(\alpha)\leq \frac{1}{2}$ and as a consequence and with the help of (\ref{alpha}):
\[\langle \hat{F}_{10},F_{10}\rangle_{L_2(\gamma_C)}\leq \frac{\sqrt{2}}{2}\|\hat{F}_{10}\|_{L_2(\gamma_C)}\|F_{10}\|_{L_2(\gamma_C)}.\]
Under this last constraint, we have
\[\min_{\hat{F}_{10}}\|F_{10}-\hat{F_{10}}\|_{L_2(\gamma_C)}^2=\min_{\alpha}\left ((1-\alpha)^2+\alpha^2\right )\|F_{10}\|_{L_2(\gamma_C)}^2=\|F_{10}\|_{L_2(\gamma_C)}^2, \]
which again implies $\mathcal{R}_p\leq \frac{\mathcal{E}}{\|F_{10}\|_{L_2(\gamma_C)}}$. 
% cas4
\item Case where $\langle \hat{F}_{10},F_{10}\rangle_{L_2(\gamma_C)}>0$, $\|\hat{F}_{10}\|_{L_2(\gamma_C)}\geq \frac{1}{2}\|F_{10}\|_{L_2(\gamma_C)}$ and $\alpha<\frac{\pi}{4}$.\\

Since $\alpha\in [0,\frac{\pi}{4}]$, the concavity of the $\sin$ function gives
\[\frac{\alpha}{\pi}\leq \frac{\sin(\alpha)}{2\sqrt{2}}.\]
In addition, the relation $\|\hat{F}_{10}\|_{L_2(\gamma_C)}\geq \frac{1}{2}\|F_{10}\|_{L_2(\gamma_C)}$ implies that
\[
\sin(\alpha)=\frac{\|\Pi_{F_{10}^{\bot}}\hat{F}_{10}\|_{L_2(\gamma_C)}}{\|\hat{F}_{10}\|_{L_2(\gamma_C)}}\leq \frac{2\|F_{10}-\hat{F}_{10}\|_{L_2(\gamma_C)}}{\|F_{10}\|_{L_2(\gamma_C)}},
\]
(the first inequality is a trigonometric formula). Finally, we obtain:
\begin{equation}\label{alppi}
\frac{\alpha}{\pi}\leq \frac{\|F_{10}-\hat{F}_{10}\|_{L_2(\gamma_C)}}{\sqrt{2}\|F_{10}\|_{L_2(\gamma_C)}}.
\end{equation}
Recall that $d_0=\langle \hat{F}_{10},\hat{s}_{10}-s_{10}\rangle_{\R^p}$. The equality defining $\alpha$ (\ref{alpha}) and the fact that $\cos(\alpha)\geq \frac{\sqrt{2}}{2}$ now imply:
\begin{align*}
\frac{|d_0|\tan(\alpha)}{\|\Pi_{F_{10}^{\bot}}\hat{F}_{10}\|_{L_2(\gamma_C)}} &\leq  \sqrt{2}|d_0|\frac{\sin(\alpha)}{\|\Pi_{F_{10}^{\bot}}\hat{F}_{10}\|_{L_2(\gamma_C)}} \text{ (since }\cos(\alpha)\geq \frac{\sqrt{2}}{2})\\
&= \frac{\sqrt{2}|d_0|}{\|\hat{F}_{10}\|_{L_2(\gamma_C)}} \text{ (from a trigonometric formula)} . \\
\end{align*}
Also, noticing that $\gamma_{1}([0;u])\leq \frac{u}{\sqrt{2\pi}}$, and that $\tan(\alpha)\leq 1$, we get:
\begin{equation}\label{alppi2}
\gamma_{1}\left (\left [0;(1+\tan(\alpha))\frac{|d_0|\tan(\alpha)}{\|\Pi_{F_{10}^{\bot}}\hat{F}_{10}\|_{L_2(\gamma_C)}}\right ] \right )\leq \gamma_{1}\left (\left [0;\frac{2\sqrt{2}|d_0|}{\|\hat{F}_{10}\|_{L_2(\gamma_C)}}\right ] \right )\end{equation} 
\[\hspace{3cm}\leq \frac{2|d_0|}{\sqrt{\pi}\|\hat{F}_{10}\|_{L_2(\gamma_C)}}.
\]
 In the cases $1$, $2$ and $3$ of Theorem \ref{the:1},    %\[|\langle \hat{F}_{10},F_{10}\rangle_{L_2(\gamma_C)}|<\frac{1}{2}|\langle \hat{F}_{10},\hat{s}_{10}-s_{10}\rangle_{\R^n}|. \]
%Ainsi, dans ces cas 
%\[\mathcal{E}> \left (2|\langle \hat{F}_{10},F_{10}\rangle_{L_2(\gamma_C)}|+\|F_{10}-\hat{F}_{10}\|_{L_2(\R^n,\gamma_{C})}\right ).\]
because $\tan(\alpha)\leq 1$ ($\alpha\leq \frac{\pi}{4}$), the equations (\ref{alppi}), (\ref{alppi2}), (\ref{eque:th1}),(\ref{eque:th02}) imply:
\[\mathcal{R}\leq \frac{\mathcal{E}}{\|F_{10}\|_{L_2(\gamma_C)}}.\]
This ends the proof of Theorem \ref{the:32}.
\end{enumerate} 
\end{proof}

\section{A general scheme to solve Problem \ref{Pb1}}\label{proofth2}
\subsection{Introduction and main result}
\paragraph{Presentation of the main ideas.}
In this section, we will prove results concerning the QDA procedure. Recall that the learning error $\mathcal{R}$ (The probability to misclassify data with a given rule when the optimal rule gives a correct classiication) satisfies:
\begin{equation}\label{riqsue}
\mathcal{R}\leq \frac{1}{2}\left ( P_1(X\in V_{\hat{\mathcal{L}}_{10}^Q}\triangle V_{\mathcal{L}_{10}^Q})+ P_0(X\in V_{\hat{\mathcal{L}}_{10}^Q}\triangle V_{\mathcal{L}_{10}^Q})\right )
\end{equation}
 (If $f:\X\rightarrow \R$, $V_f$ is defined by (\ref{de:rg}) at the beginning of the preceding section).
Indeed, the event $X\in V_{\hat{\mathcal{L}}_{10}^Q}\triangle V_{\mathcal{L}_{10}^Q}$ corresponds to the case where decisions (good or erroneous) taken by the optimal rule and the plug-in rule are different. 
\begin{remarque}
In the case of procedure LDA, we had
\[
\mathcal{R}=\frac{1}{2}\left ( \gamma_{C,s_{10}} \left (X \in \hat{V} \setminus V -\frac{m_{10}}{2}\right )+\gamma_{C,s_{10}} \left (X \in V \setminus \hat{V}+\frac{m_{10}}{2}\right )\right ).
\]
From this equation, one can easily deduce that
\[2\mathcal{R}=\frac{1}{2}\left ( \gamma_{C,s_{10}} \left (X \in \hat{V} \triangle V -\frac{m_{10}}{2}\right )+\gamma_{C,s_{10}} \left (X \in V \triangle \hat{V}+\frac{m_{10}}{2}\right )\right ),\]
and as a consequence:  
\begin{equation}
2\mathcal{R}=\frac{1}{2}\left ( P_1(X\in V_{\hat{\mathcal{L}}_{10}^A}\triangle V_{\mathcal{L}_{10}^A})+ P_0(X\in V_{\hat{\mathcal{L}}_{10}^A}\triangle V_{\mathcal{L}_{10}^A})\right ). 
\end{equation}
It is less obvious that this type of relation is true in the "quadratic case. It's seems less obvious.
\end{remarque}
 In subsection \ref{emlrpoit} we will present a technique to put an upper bound on the probabilities like $P(V_{f}\triangle V_{f+\delta})$. In this type of quantity, we shall call perturbation function the measurable function $\delta$ (which can be thought as a small function) and optimal frontier function the measurable function $f$ from $\X$ to $\R$. In the case of the QDA, the results obtained are consequences of Theorem \ref{thedeux} given in the next paragraph, with frontier function $f=\mathcal{L}_{10}^Q$ and perturbation function $\delta=\hat{\mathcal{L}}_{10}^Q-\mathcal{L}_{10}^Q$.\\

\paragraph{A general result concerning quadratic perturbation of a quadratic rule.}In the sequel we need to introduce some quantities related to gaussian measure in separable Banach spaces, and $\X$ is a separable Banach Space. We refer to  \cite{Bogachev:1998fk} and its section on measurable polynomials for a rigourous treatment of the subject. The Hilbert Space of measurable affine function from $\X$ to $\R$ with finite $L_2(\gamma_{C,m})$ norm and null integral with respect to $\gamma_{C,m}$ will be denoted by ${\X}_{\gamma_{C,m}}^*$. The Hilbert space of measurable quadratic form in $L_2(\gamma_{C,m})$ with null integral with respect to $\gamma_{C,m}$ will be denoted $E_2(\gamma_{C,m})$. The space of measurable quadratic forms in $L_2(\gamma_{C,m})$ will be denoted by $\X_{2\gamma}^*$ and we have the classical gaussian chaos decomposition in $L_2(\gamma_{C,m})$:
\[\X_{2\gamma}^*=\{Cte\}\oplus {\X}_{\gamma_{C,m}}^* \oplus E_{2}(\gamma_{C,m}).\]
 In infinite dimension $H(\gamma_{C,m})$ is the reproducing kernel Hilbert space associated to $\gamma_{C,m}$, in finite dimension ($\X=\R^p$), we have (if $C$ is of full rank) $H(\gamma_{C,m})=\R^p$. Recall that to each Hilbert-Schmidt operator $A$ on $H(\gamma_{C,m})$, one can associate the measurable element of $E_{2}(\gamma_{C,m})$ and that each element of $E_{2}(\gamma_{C,m})$ is associated to a unique Hilbert-Schmidt operator on $H(\gamma_{C,m})$. In finite dimension, if $C$ is of full rank: 
\begin{align*}
q_A^{\gamma_{C,m}}(x)&=q_{C^{-1/2}AC^{-1/2}}(x-m)-\int_{\X} q_{C^{-1/2}AC^{-1/2}}(x-m)\gamma_{C,m}(dx)\\
&(\text{ recall that } q_{A}(x)=\langle Ax,x\rangle_{\R^p} )\\
&= \langle AC^{-1/2}(x-m),C^{-1/2}(x-m) \rangle_{\R^p}-\sum_{i=1}^p\lambda_i,
\end{align*}
where $(\lambda_i)_{i=1,\dots,p}$ is the vector of the eigenvalues of $A$.
\begin{theoreme}\label{thedeux}
  Let ${\X}$ be a separable Banach space, $\gamma_{C,m}$ be a gaussian measure on ${\X}$ with mean $m$ and covariance $C$. Let $A$ and $D$ be $2$ symmetric Hilbert-Schmidt operators on $H(\gamma_{C,m})$, $F,d\in {\X}_{\gamma_{C,m}}^*$, and $c,d_0\in \R$. Let
  \[f(x)=c+ F(x) +q_A^{\gamma_{C,m}}(x)\;\;\text{and}\;\; \delta(x)=d_0+d(x)+q_D^{\gamma_{C,m}}(x)\] 
  be the function defining $V_f$ and $V_{f+\delta}$
 (If $g:\X\rightarrow \R$, $V_g$ is defined by equation (\ref{de:rg})). Finally, let $r,R\in \R$  be such that  $R>r>0$. 
  \begin{enumerate}
 \item  Assume that $r\leq \|f\|_{L_2(\gamma_{C,m})}$. Then, for all $q\in ]0,1[$, there exists $c_1(r,q)>0$ (that only depends on $r$ and $R$) such that
   \begin{equation}
   \gamma_{C,m}(V_f\triangle V_{f+\delta})\leq c_1(r,q) \|\delta\|_{L_2(\gamma_{C,m})}^{q/3} .
     \end{equation} 
  \item If $|\E_{L_2(\gamma_{C,m})}[f]|>r$ and $\|f\|_{L_2(\gamma_{C,m})}$, then, for all $q\in ]0,1[$, there exists 
   $c_2(r,q)>0$ (that only depends on $r$ and $R$) such that
   \begin{equation}
   \gamma_{C,m}(V_f\triangle V_{f+\delta})\leq c_2(r,q) \|\delta\|_{L_2(\gamma_{C,m})}^{2q/7} .
     \end{equation} 

  \end{enumerate}

    \end{theoreme}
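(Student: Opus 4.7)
The starting point is the set-theoretic observation that on the symmetric difference $V_f\triangle V_{f+\delta}$ the functions $f$ and $f+\delta$ have opposite signs, so $|f(x)|\leq |\delta(x)|$ pointwise there. Hence for every threshold $t>0$,
\[
\gamma_{C,m}\bigl(V_f\triangle V_{f+\delta}\bigr)\leq \gamma_{C,m}(|f|\leq|\delta|)\leq \gamma_{C,m}(|f|\leq t)+\gamma_{C,m}(|\delta|>t),
\]
and the proof reduces to controlling these two probabilities and then optimising the split point $t$.

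The concentration of $\delta$ is the easy half. Since $\delta=d_0+d+q_D^{\gamma_{C,m}}$ lies in the direct sum of the first three Wiener chaoses of $\gamma_{C,m}$, hypercontractivity of the Ornstein--Uhlenbeck semigroup gives $\|\delta\|_{L_p(\gamma_{C,m})}\leq (p-1)\|\delta\|_{L_2(\gamma_{C,m})}$ for every $p\geq 2$, and Markov's inequality yields
\[
\gamma_{C,m}(|\delta|>t)\leq \left(\frac{(p-1)\|\delta\|_{L_2(\gamma_{C,m})}}{t}\right)^{p}.
\]
Thus $\gamma_{C,m}(|\delta|>t)$ decays as an arbitrarily high polynomial in $\|\delta\|_{L_2}/t$, at the cost of a multiplicative constant depending on $p$.

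The heart of the argument is an anti-concentration (small-ball) bound for $f$ under the hypothesis $\|f\|_{L_2(\gamma_{C,m})}\geq r$. The plan is to diagonalise the Hilbert--Schmidt operator $A$ in $H(\gamma_{C,m})$, reducing the law of $f$ under $\gamma_{C,m}$ to that of $c+\sum_i a_iX_i+\sum_i\lambda_i(X_i^2-1)$ with $(X_i)$ independent standard Gaussians. The orthogonality identity $\|f\|_{L_2}^2=c^2+\sum_i a_i^2+2\sum_i\lambda_i^2\geq r^2$ then forces, via a three-way case split, at least one of the three contributions to carry mass of order $r$; in each case classical one-dimensional anti-concentration (Gaussian density bound when a linear coefficient $a_i$ dominates, or a generalised $\chi^2$ small-ball estimate when a single eigenvalue $\lambda_i$ dominates, with the constant case handled by the second part of the theorem) yields
\[
\gamma_{C,m}(|f|\leq t)\leq C(r)\,t^{\beta}
\]
for an explicit exponent $\beta>0$. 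Balancing this with the Markov bound above and optimising $p$ produces the exponent $q/3$ of part~(1); because $p$ may be taken arbitrarily large, any $q<1$ is reachable, with a constant $c_1(r,q)$ that blows up as $q\to 1$.

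For part~(2) the additional hypothesis $|\E[f]|=|c|>r$ selects the constant-dominated branch of the case analysis: $\{|f|\leq t\}\subseteq\{|f-c|\geq |c|-t\}$ becomes a pure large-deviation event for the centred second chaos $f-c$, again controlled by hypercontractivity, but the trade-off with the Markov bound on $\delta$ is now less favourable and only the exponent $2q/7$ survives. The main obstacle throughout is the anti-concentration step: it must be uniform in the (possibly infinite-dimensional) eigenvalue configuration of $A$ and must yield a constant depending only on $r$ and $q$, so that the resulting $c_1(r,q)$, $c_2(r,q)$ are genuinely dimension-free and the statement applies in the Banach-space setting needed for the curve-classification framework of the later sections.
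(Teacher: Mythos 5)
Your overall architecture is sound and matches the paper's in spirit: the paper's Lemma \ref{fonda} is exactly a decomposition of $V_f\triangle V_{f+\delta}\subseteq\{|f|\leq|\delta-\E\delta|+|\E\delta|\}$ into a small-ball event for $f$ and tail events for $\delta$ (it uses a H\"older sum over dyadic bands of $|\delta-\E\delta|$ rather than a single optimised threshold, which is why the loss shows up as an exponent $q<1$; your union bound with $p\to\infty$ recovers the same exponents). Your tail bound for $\delta$ via hypercontractivity of chaos of order $\leq 2$ is a legitimate substitute for the paper's combination of the Lipschitz concentration inequality and the Laurent--Massart bound, and gives the same sub-exponential decay. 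The problem is the anti-concentration step, which you correctly identify as the heart of the matter but do not actually supply. Your three-way split is on which of $c^2$, $\sum_i a_i^2$, $2\sum_i\lambda_i^2$ carries mass of order $r^2$, yet the tools you invoke (a Gaussian density bound for a dominant linear coefficient, a $\chi^2$ small-ball bound for a dominant eigenvalue) require a \emph{single} coefficient to dominate. The case the paper explicitly flags as the difficult one --- $\sum_i\lambda_i^2$ bounded below while $\max_i|\lambda_i|\to 0$, i.e.\ spectral mass spread over many small eigenvalues --- is covered by neither, and it cannot be dismissed: it is generic in the infinite-dimensional setting you need. The paper resolves it with Lemma \ref{lemme:teplo}, a Berry--Esseen theorem for second-order Gaussian chaos proved by an explicit characteristic-function computation and the Esseen smoothing inequality, giving $\sup_\epsilon|P(|q|\leq\epsilon)-P(|\sigma(q)\xi+\E q|\leq\epsilon)|\leq 104\max_i|\lambda_i|/\sigma(q)$; the final exponents $1/3$ and $2/7$ come precisely from balancing this CLT error against the single-eigenvalue bound $\sqrt{\epsilon/\max_i|\lambda_i|}$ and (for part 2) the Chebyshev bound. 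Without this normal-approximation ingredient your bound $\gamma_{C,m}(|f|\leq t)\leq C(r)t^\beta$ is unproved, and no value of $\beta$ is justified.

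A secondary gap: for part 2 you claim that $|\E[f]|=|c|>r$ turns $\{|f|\leq t\}$ into a large-deviation event for $f-c$. That is only true when $\var(f)$ is small compared with $|c|$; if $c=r$ but $\sigma(f)$ is large, $\{|f-c|\geq|c|-t\}$ has probability close to one and the argument gives nothing. In the paper the Chebyshev estimate $P(|q|\leq\epsilon)\leq\sigma^2(q)/(|\E q|-\epsilon)^2$ is used only in the regime $\sigma(q)<\epsilon^{1/7}$, and the regimes with non-negligible variance are again handled by the single-eigenvalue bound and the CLT lemma. So part 2 also requires the full three-step case analysis, not just the constant-dominated branch.
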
 

The two following subsections are devoted to the proof of this theorem.  Subsection \ref{emlrpoit} presents a general methodology to obtain this type of result, and in Section \ref{sduytzr}, we apply this methodology to obtain Theorem \ref{thedeux}.
\subsection{Decomposition of the domain}\label{emlrpoit}
We will give an upper bound to the probability that $X\in V_{f}\Delta V_{f+\delta}$. In the cases we have in mind, this set is essentially composed of elements for which $\delta$ takes large values or $f$ is near zero. Also, we shall bound the  measure of areas on which
\begin{enumerate}
\item  the perturbation is large (with large deviation inequality),
\item  $|f|$ is small (with an inequality such as $P(|f(X)|\leq \epsilon)\leq g(\epsilon)$).
\end{enumerate}
 Lemma \ref{fonda} that follows is based on the two following assumptions.\\
\begin{enumerate}
\item \underline{Assumption $A_1$}. It exists $c_0,c_1>0$, $h_{\delta}:\R^+\rightarrow \R^+$ non decreasing such that $h_{\delta}(0)=0$ , $\lim_{s\rightarrow\infty} h_{\delta}(s)=\infty$ and
\begin{equation}\label{GD}
\forall s>0, \;\; P\left ( |\delta(X)-\E[\delta(X)]| \geq c_0 h_{\delta}(s)\right )\leq c_1 e^{-\frac{s^2}{2}}.
\end{equation}

\item \underline{Assumption $A_2$}.
It exists $\beta>0$ and $c_2>0$ such that 

\begin{equation}\label{SD}
\forall \epsilon>0,\;\; P\left ( |f(X)|\leq \epsilon\right )\leq c_2 \epsilon^{\beta}.
\end{equation}

\end{enumerate}
\begin{remarque}
The function $h_{\delta}$ of Assumption $A_1$ will help us in measuring the effect of a perturbation $\delta$.
\end{remarque}
\begin{Lemme}\label{fonda}
Under Assumption $A_1$ (\ref{GD}) and $A_2$ (\ref{SD}), for all $q\in]0;1[$ we have:    
\begin{align*}
P(X\in V_f\Delta V_{f+\delta})\leq & c_1^{1-q}c_2|\E_{P}[\delta(X)]|^{q\beta}\\
& +\sqrt{\frac{2\pi}{1-q}}\frac{c_2c_1^{1-q}}{2}\E\left [\left (c_{0}h_{\delta}\left (\frac{|\xi|}{\sqrt{1-q}}+1\right )+|\E_{P}[\delta(X)]|\right )^{q\beta}\right ],
\end{align*}
where $\xi$ is a centred real gaussian random variable with variance $1$.
\end{Lemme}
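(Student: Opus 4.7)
The plan is to combine a deterministic threshold argument with the two tail bounds $A_1$ and $A_2$, and then to upgrade the resulting $s$--dependent bound to the stated form by a ``power--down'' trick together with an averaging against a Gaussian kernel.

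The starting point is the geometric inclusion
\[
V_f \Delta V_{f+\delta} \subseteq \{\, x \in \X \ :\ |f(x)| \leq |\delta(x)|\,\},
\]
which holds because at any point of the symmetric difference the real numbers $f(x)$ and $f(x)+\delta(x)$ must have opposite signs, forcing $|f(x)| \leq |\delta(x)|$. Writing $\delta(x) = \E_P[\delta(X)] + (\delta(x) - \E_P[\delta(X)])$ and applying the triangle inequality gives $|\delta(X)| \leq |\E_P[\delta(X)]| + |\delta(X) - \E_P[\delta(X)]|$. For any fixed threshold $s \geq 0$, I then split according to whether the centred perturbation exceeds $c_0 h_\delta(s)$: by assumption $A_1$ the first event has probability at most $c_1 e^{-s^2/2}$, and on the second event the containment forces $|f(X)| \leq |\E_P[\delta(X)]| + c_0 h_\delta(s)$ on the symmetric difference, to which $A_2$ applies and yields a bound $c_2 \bigl(|\E_P[\delta(X)]| + c_0 h_\delta(s)\bigr)^\beta$. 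Summing the two contributions gives the deterministic ``master inequality''
\[
P(V_f \Delta V_{f+\delta}) \ \leq\ c_1 e^{-s^2/2} + c_2 \bigl(|\E_P[\delta(X)]| + c_0 h_\delta(s)\bigr)^\beta, \qquad \forall\, s \geq 0.
\]

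The second stage exploits the fact that the left--hand side is a probability, hence $\leq 1$. I would apply the elementary inequality $\min(1,a) \leq a^{1-q}$ (valid for all $a>0$ and $q \in (0,1)$) to the deviation term, replacing $c_1 e^{-s^2/2}$ by $c_1^{1-q} e^{-(1-q)s^2/2}$, and a companion manipulation on the polynomial term replaces the exponent $\beta$ by $q\beta$. The pointwise bound becomes
\[
P(V_f \Delta V_{f+\delta}) \ \leq\ c_1^{1-q} e^{-(1-q)s^2/2} + c_2 \bigl(|\E_P[\delta(X)]| + c_0 h_\delta(s)\bigr)^{q\beta}.
\]

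The final stage promotes this pointwise inequality to the stated expectation. Making the substitution $s = 1 + |\xi|/\sqrt{1-q}$ with $\xi \sim N(0,1)$, the estimate $(1-q)s^2 \geq \xi^2$ controls the exponential contribution, and the Gaussian integral $\int_0^\infty e^{-(1-q)s^2/2}\,ds = \tfrac{1}{2}\sqrt{2\pi/(1-q)}$ produces exactly the prefactor appearing in the lemma. The polynomial term then becomes the stated expectation $\E\bigl[\bigl(c_0 h_\delta(|\xi|/\sqrt{1-q}+1) + |\E_P[\delta(X)]|\bigr)^{q\beta}\bigr]$. The first summand, $c_1^{1-q} c_2 |\E_P[\delta(X)]|^{q\beta}$, is the ``pure mean'' contribution which I would peel off at the $s = 0$ boundary (where $h_\delta(0) = 0$) before performing the Gaussian averaging.

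The main obstacle is the delicate bookkeeping required to aggregate every constant into exactly the form stated --- in particular balancing the two power--down manipulations so that the coefficient $c_2 c_1^{1-q}$ appears correctly in front of both summands, and verifying that the shifted substitution $s = 1 + |\xi|/\sqrt{1-q}$ indeed yields the exact Gaussian factor $\tfrac{1}{2}\sqrt{2\pi/(1-q)}$ without residual correction terms.
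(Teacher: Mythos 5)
Your opening reduction ($V_f\Delta V_{f+\delta}\subseteq\{|f|\le|\delta|\}$, the triangle inequality, and a threshold on $|\delta(X)-\E_{P}[\delta(X)]|$) coincides with the paper's, but the step that follows is where the argument breaks. Splitting at a single threshold $s$ and taking a union bound gives the additive estimate $P(V_f\Delta V_{f+\delta})\le c_1e^{-s^2/2}+c_2\bigl(|\E_{P}[\delta(X)]|+c_0h_{\delta}(s)\bigr)^{\beta}$, and no averaging over $s$ can turn this into the stated inequality: after integrating against any law for $s$, the first summand contributes a strictly positive constant that does not depend on $\delta$, whereas the right-hand side of the lemma tends to $0$ when $|\E_{P}[\delta(X)]|\to 0$ and $h_{\delta}\to 0$ --- precisely the regime in which the lemma is used, with $h_{\delta}(s)=\|\delta-\E[\delta]\|_{L_2(\gamma)}(s+s^2)$. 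The ``$\min(1,a)\le a^{1-q}$'' manipulations applied separately to each summand do not repair this, because they preserve the sum structure; what is needed is a \emph{product} of the two tail bounds, not their sum. (A pointwise optimization over $s$ in your master inequality would recover the correct order $\|\delta\|^{q\beta}$ up to logarithmic factors in the application, but it does not prove the inequality as stated.)

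The paper obtains the product structure by partitioning the sample space into shells $B_j=\{c_0h_{\delta}(j)\le |\delta(X)-\E[\delta(X)]|<c_0h_{\delta}(j+1)\}$, $j\in\N$, and using on each shell the elementary H\"older-type bound $P(U\cap B_j)\le P(U\cap B_j)^{q}P(B_j)^{1-q}\le \bigl(c_2(|\E[\delta]|+c_0h_{\delta}(j+1))^{\beta}\bigr)^{q}\bigl(c_1e^{-j^2/2}\bigr)^{1-q}$. Every term of the resulting series then carries both the weight $e^{-(1-q)j^2/2}$ and a factor $(|\E[\delta]|+c_0h_{\delta}(j+1))^{q\beta}$ that vanishes with $\delta$; comparing the series with the corresponding Gaussian integral is what produces the prefactor $\tfrac12\sqrt{2\pi/(1-q)}$ and the expectation over $|\xi|/\sqrt{1-q}+1$, while the $j=0$ shell yields the isolated term $c_1^{1-q}c_2|\E_{P}[\delta(X)]|^{q\beta}$. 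If you replace your single two-way split by this shell decomposition plus H\"older, the remainder of your computation (the Gaussian substitution and the constant bookkeeping) goes through essentially as you describe.
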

\begin{proof} 
Recall that $V_{f}=\{ x : f(x)\geq 0\}$.
\[P(X \in V_f\Delta V_{f+\delta})=\]
\[P\left (-(\delta(X)-\E[\delta(X)])-\E[\delta(X)]\leq f(X)\leq 0 \right .\]
\[\hspace{1cm}\left .\text{ or } 0\leq f(X)\leq (\delta(X)-\E[\delta(X)])+ \E[\delta(X)]\right ),\]
also,
\[P(X \in V_f\Delta V_{f+\delta}) \leq P(U),\]
\[\text{ where } \;\; U=\left \{|f(X)|\leq |\delta(X)-\E[\delta(X)]|+|\E[\delta(X)]|\right \}.\]
Define $B_j=\{c_0h_{\delta}(j)\leq |\delta(X)-\E[\delta(X)]|< c_0h_{\delta}(j+1)\}$ for $j\in \N$. This family of events permits us to recover all possible events. \\

We observe that
\[P(U)= \sum_{j\geq 0}P(U\cap B_j),\]

and then using the  Holder inequality, ( $p+q=1$) we get:
\[P(U)\leq  \sum_{j\geq 0}P(U\cap B_j)^{q}P(B_j)^{p}.\]  
 It follows that
\[
P(X \in V_f\Delta V_{f+\delta}) 
\]
\begin{align*}&\leq \sum_{j}P\left (|f(X)|\leq |\E[\delta(X)]|+c_0h_{\delta}(j+1)  \right )^{q}P\left (|\delta(X)-\E[\delta(X)]| \geq c_0h_{\delta}(j)\right )^{1-q}\\
 &\leq c_2c_1^{1-q}\sum_{j\geq 0}\left (|\E[\delta(X)]|+c_0h_{\delta}(j+1)\right)^{q\beta}e^{-\frac{(1-q)j^2}{2}},\\
&(\text{ from assumption A1 and A2 })\\
\end{align*}
\[\leq c_2c_1^{1-q}\left (|\E[\delta(X)]|^{q\beta_{0}} \right . \]
\[\hspace{1cm}\left . +\sqrt{\frac{2\pi}{1-q}}\int_{0}^{\infty}\left  (h_{\delta}(x+1)+|\E[\delta(X)]|\right )^{q\beta} \sqrt{\frac{1-q}{2\pi}}e^{-\frac{(1-q)x^2}{2}}dx\right )
\]
which implies the desired result.
\end{proof}
\begin{Lemme}\label{lclaim1}
Let $\delta_1,\dots,\delta_k$ be $k$ perturbations satisfying assumption $A_1$ defined by equation (\ref{GD}) with the error functions $h_{\delta_1},\dots,h_{\delta_k}$. Then, if $h_{\delta}=\sum_{i=1}^kh_{\delta_i}$, there exists $c_0(k),c_1(k)>0$ such that 
\begin{equation}
\forall s>0\;\; P\left (|\delta-\E(\delta)|\geq c_0 h_{\delta}(s)\right )\leq c_1e^{-\frac{s^{2}}{2}}.
\end{equation}
\end{Lemme}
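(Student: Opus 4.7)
The plan is to combine the $k$ large-deviation estimates by a union bound argument, after using the triangle inequality to decompose the deviation of $\delta$ into deviations of the individual $\delta_i$. Write $c_{0,i}$ and $c_{1,i}$ for the constants appearing in Assumption $A_1$ for each $\delta_i$ (so that $P(|\delta_i - \E[\delta_i]| \geq c_{0,i} h_{\delta_i}(s)) \leq c_{1,i} e^{-s^2/2}$). I will set
\[
c_0 = \max_{1 \leq i \leq k} c_{0,i}, \qquad c_1 = \sum_{i=1}^{k} c_{1,i},
\]
both of which depend only on $k$ (and on the given individual constants).

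The first key step is the pigeonhole observation: by linearity of expectation and the triangle inequality,
\[
|\delta(X) - \E[\delta(X)]| \leq \sum_{i=1}^{k} |\delta_i(X) - \E[\delta_i(X)]|.
\]
Therefore, if the event $\{|\delta(X) - \E[\delta(X)]| \geq c_0 h_\delta(s)\} = \{|\delta(X) - \E[\delta(X)]| \geq c_0 \sum_i h_{\delta_i}(s)\}$ occurs, then we must have
\[
\sum_{i=1}^{k}\bigl(|\delta_i(X) - \E[\delta_i(X)]| - c_0 h_{\delta_i}(s)\bigr) \geq 0,
\]
which forces at least one index $i$ to satisfy $|\delta_i(X) - \E[\delta_i(X)]| \geq c_0 h_{\delta_i}(s)$.

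The second step is the union bound followed by the monotonicity in the threshold. Since $c_0 \geq c_{0,i}$ for every $i$, the event $\{|\delta_i - \E[\delta_i]| \geq c_0 h_{\delta_i}(s)\}$ is contained in $\{|\delta_i - \E[\delta_i]| \geq c_{0,i} h_{\delta_i}(s)\}$, and Assumption $A_1$ applied to $\delta_i$ gives the bound $c_{1,i} e^{-s^2/2}$. Summing over $i$,
\[
P\bigl(|\delta(X) - \E[\delta(X)]| \geq c_0 h_\delta(s)\bigr) \leq \sum_{i=1}^{k} P\bigl(|\delta_i(X) - \E[\delta_i(X)]| \geq c_0 h_{\delta_i}(s)\bigr) \leq \sum_{i=1}^{k} c_{1,i} e^{-s^2/2} = c_1 e^{-s^2/2},
\]
which is the desired inequality. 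There is no real obstacle here; the only thing to be mindful of is that $h_\delta(s) = \sum_i h_{\delta_i}(s)$ is precisely what makes the pigeonhole step work, which is why this particular choice of error function is the natural one for combining several perturbations.
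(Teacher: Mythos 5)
Your proof is correct and follows essentially the same route as the paper's: triangle inequality to reduce to the individual deviations, a pigeonhole step to locate one index whose deviation exceeds its own threshold, and a union bound with $c_1=\sum_i c_{1,i}$. The only (harmless) difference is that your pigeonhole variant yields the slightly better constant $c_0=\max_i c_{0,i}$ where the paper takes $c_0=k\max_i c_{0,i}$.
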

\begin{proof}
Recall that for all $i$, $h_{\delta_i}\geq 0$. Let us fix $s>0$. The proof relies on the pigeonhole principle. Indeed, if $\sum_{i=1}^k|\delta_i-\E[\delta_i]|\geq k\sum_{i=1}^kc_{0i}h_{\delta_i}(s)$ then there exists $i_{0}\in \{1,\dots,k\}$ such that $|\delta_{i_0}-\E[\delta_{i_0}]|\geq \sum_{i=1}^kc_{0i}h_{\delta_i}(s)$. If we fix $c_{0}=k\max c_{0i}$, we then have
\begin{align*}
P\left (\left |\sum_{i=1}^k\delta_i-\E[\delta_i]\right |\geq \right .&\left . c_0 \sum_{i=1}^kh_{\delta_i}(s)\right )\leq P\left (\sum_{i=1}^k|\delta_i-\E[\delta_i]|\geq k\sum_{i=1}^kc_{0i}h_{\delta_i}(s)\right )\\
&(\text{ from the triangle inequality and the fact that } \\
& c_{0}\sum_{i=1}^kh_{\delta_i}(s)\geq k\sum_{i=1}^kc_{0i}h_{\delta_i}(s) \text{ )}\\
& \leq P\left (\exists i_0\in \{1,\dots,k\} \; : \; |\delta_{i_0}-\E[\delta_{i_0}]|\geq \sum_{i=1}^kc_{0i}h_{\delta_i}(s)\right )\\
&\text{ (pigeon hole principle) }\\
&\leq \sum_{i=1}^kP\left (|\delta_i-\E[\delta_i]|\geq c_{0i}h_{\delta_{i}}(s)\right )\\
&\text{(subadditivity of probability)}\\
&\leq \sum_{i=1}^k c_{1i}e^{-\frac{s^{2}}{2}}\\
& (h_{\delta_i} \text{ satisfies assumption } A_1),
\end{align*}
which ends the proof.
\end{proof}
The results that allow us to verify assumption A2 are presented in Section \ref{pourhypA2}. We now recall some standard large deviation results that allow us to verify assumption A1. 
\subsection{Large deviation}
In the case where $\delta$ is linear or Lipschits, the following classical result (see for example \cite{Bogachev:1998fk} (p174)) allows us to check assumption $A_1$. 
\begin{theoreme}\label{thlipis}
Let $\gamma=\gamma_{C}$ be a gaussian measure of covariance $C$ on ${\X}$ a separable Banach Space, $H=H(\gamma)$ be the associated reproducing kernel Hilbert Space,  $\delta:{\X}\rightarrow \R$ a function such that there exists $N(\delta)>0$ with 
\begin{equation}
|\delta(x+h)-\delta(x)|\leq N(\delta) |h|_{H(\gamma)} \;\;\forall h\in H(\gamma) \;\;\gamma-ps.
\end{equation}
Then
\begin{equation}
\forall s>0\;\; \gamma\left (x\in {\X}\;\; :|\delta(x)-\int\delta(x) d\gamma|>s\right )\leq 2 e^{-\frac{s^2}{2N(\delta)^2}}
\end{equation}
\end{theoreme}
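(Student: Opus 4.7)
The plan is to derive this from the Gaussian isoperimetric inequality of Borell and Sudakov--Tsirelson. Without loss of generality I may assume $\int \delta \, d\gamma = 0$, since subtracting a constant does not affect $N(\delta)$. The key ingredient is the following: for every Borel set $A \subseteq \X$ and every $t > 0$, writing $U_H$ for the closed unit ball of $H = H(\gamma)$,
$$\gamma(A) \geq \Phi(a) \;\;\Longrightarrow\;\; \gamma(A + t \, U_H) \geq \Phi(a + t).$$

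First I would prove concentration around a median. Let $m$ be a median of $\delta$ and set $A = \{x \in \X : \delta(x) \leq m\}$, so $\gamma(A) \geq 1/2 = \Phi(0)$. Using Cameron--Martin quasi-invariance of $\gamma$ to transport the $\gamma$-a.s.\ Lipschitz bound along translations by elements of $H$, one obtains (modulo a $\gamma$-null set) the inclusion $A + t\, U_H \subseteq \{\delta \leq m + N(\delta)\, t\}$. The isoperimetric inequality then yields
$$\gamma\bigl(\delta > m + N(\delta)\, t\bigr) \leq 1 - \Phi(t) \leq \tfrac{1}{2} e^{-t^2/2},$$
and the same argument with $A = \{\delta \geq m\}$ produces the symmetric lower tail. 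Setting $s = N(\delta)\, t$ gives $\gamma(|\delta - m| > s) \leq e^{-s^2/(2 N(\delta)^2)}$.

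It remains to replace the median by the mean. Integrating the tail bound one has $|m - \int \delta \, d\gamma| \leq \int |\delta - m| \, d\gamma \leq N(\delta) \sqrt{\pi/2}$; redistributing this small shift between the two tails, while enlarging the prefactor from $1$ to $2$, recovers the announced estimate. The main technical obstacle is the inclusion $A + t\, U_H \subseteq \{\delta \leq m + N(\delta)\, t\}$ at the almost-sure level: since the Lipschitz bound is only assumed outside a $\gamma$-null set, one must invoke Cameron--Martin to ensure that the translate of this null set by any $h \in H$ remains $\gamma$-null. This is precisely the feature of Gaussian measures on Banach spaces that underpins the whole argument and that makes the isoperimetric inequality the right tool (rather than, say, a purely semigroup-based derivation, which would also work but requires additional regularity machinery for non-smooth $\delta$).
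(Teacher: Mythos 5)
The paper offers no proof of this statement: it is quoted as a classical result with a pointer to Bogachev (p.~174), so there is nothing internal to compare against and your proposal must stand on its own. The isoperimetric route you take is the standard one for the \emph{median} version, and that part is sound: with $A=\{\delta\leq m\}$, Borell's inequality together with the Cameron--Martin argument you describe (to propagate the $\gamma$-a.s.\ Lipschitz bound along translates by $h\in H(\gamma)$) gives $\gamma(\delta>m+N(\delta)t)\leq 1-\Phi(t)\leq\tfrac12 e^{-t^2/2}$, hence $\gamma(|\delta-m|>s)\leq e^{-s^2/(2N(\delta)^2)}$.

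The gap is your final median-to-mean step. Writing $cN(\delta)$ for the bound on $|m-\int\delta\,d\gamma|$ (your integration gives $c=\sqrt{\pi/2}$; the sharper one-sided computation gives $c=1/\sqrt{2\pi}$), the shift argument only yields $\gamma\left(|\delta-\int\delta\,d\gamma|>s\right)\leq e^{-(s-cN(\delta))^2/(2N(\delta)^2)}$ for $s\geq cN(\delta)$. This is \emph{not} dominated by $2e^{-s^2/(2N(\delta)^2)}$ for all $s$: the ratio of the first bound to the second equals $\tfrac12 e^{sc/N(\delta)-c^2/2}$, which tends to infinity with $s$, so your bound fails for every $s$ larger than roughly $N(\delta)(c^2+2\ln 2)/(2c)$. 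A constant shift inside a Gaussian exponent cannot be absorbed by any constant prefactor uniformly in $s$; no ``redistribution'' between the two tails repairs this, since at least one tail must carry the full shift. To obtain the inequality as stated --- deviation from the \emph{mean}, exponent exactly $s^2/(2N(\delta)^2)$, prefactor $2$ --- you need the Cirelson--Ibragimov--Sudakov argument: prove the exponential moment bound $\int e^{\lambda(\delta-\int\delta\, d\gamma)}d\gamma\leq e^{\lambda^2N(\delta)^2/2}$ (via It\^{o}'s formula applied to the heat semigroup along a Brownian motion, or via the Gaussian logarithmic Sobolev inequality and the Herbst argument), then apply the Chernoff bound to each tail. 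Alternatively, the isoperimetric proof as you wrote it correctly establishes the theorem with the median in place of the mean, which would in fact suffice for the use made of the result in Lemma \ref{fonda}. The technical point you flag about translating the exceptional null set by elements of $H(\gamma)$ is handled correctly.
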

In the case where $\delta$ is quadratic, the following result from Massart and Laurent \cite{Laurent:2000fk} (Lemma 1 p1325 ) will help us to check assumption $A_1$.
\begin{theoreme}\label{proplaurent}
If $D=Diag(d_1,\dots,d_p)$ and $q_D(x)=\langle Dx,x\rangle_{\R^p}$, then
\begin{equation}\label{equ1}
\gamma_{p}\left (x\in \R^p \;:\; q_D(x)-\int_{\R^p}q_D(x)\gamma_{p}(dx)\geq \frac{s}{2} \|q_D\|_{L_2(\gamma_{p})}+\sup_{i}|d_i|s^2 \right )\leq e^{-\frac{s^2}{2}}
\end{equation} 
\begin{equation}\label{equ2}
\gamma_{p}\left (x\in \R^p \;\; :\;  q_D(x)-\int_{\R^p}q_D(x)\gamma_{p}(dx)\leq -\frac{s}{2} \|q_D\|_{L_2(\gamma_{p})}\right )\leq e^{-\frac{s^2}{2}}
\end{equation} 
 \end{theoreme}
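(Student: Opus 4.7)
The plan is to follow the classical Cramér–Chernoff argument applied to a weighted sum of centered chi-squared variables, which is the standard route to the Laurent–Massart tail bounds. Since $D=\mathrm{Diag}(d_1,\dots,d_p)$, under $\gamma_p$ we may write $X=(X_1,\dots,X_p)$ with $X_i$ i.i.d.\ $N(0,1)$ and
\[
q_D(X)-\int q_D\,d\gamma_p=\sum_{i=1}^p d_i(X_i^2-1).
\]
A direct computation of the Gaussian Laplace transform of a single squared coordinate gives, for $2td_i<1$,
\[
\E\bigl[e^{td_i(X_i^2-1)}\bigr]=\frac{e^{-td_i}}{\sqrt{1-2td_i}},
\]
so by independence, for $0<t<1/(2\sup_i|d_i|)$,
\[
\log\E\Bigl[\exp\bigl(t(q_D(X)-\E q_D(X))\bigr)\Bigr]=-\sum_{i=1}^p\Bigl(td_i+\tfrac12\log(1-2td_i)\Bigr).
\]

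The next step is the key log estimate $-\tfrac12\log(1-y)-\tfrac{y}{2}\leq \tfrac{y^2}{4(1-y)}$ for $y<1$, applied termwise with $y=2td_i$, after which the denominator $1-2td_i$ is uniformly bounded from below by $1-2t\sup_i|d_i|$. Summing yields the clean MGF bound
\[
\log\E\bigl[e^{t(q_D(X)-\E q_D(X))}\bigr]\le \frac{t^{2}\sum_i d_i^{2}}{1-2t\sup_i|d_i|}.
\]
For the upper tail I plug this into Markov's inequality $P(q_D-\E q_D\ge u)\le e^{-tu}\E[e^{t(q_D-\E q_D)}]$ and choose $t=u/\bigl(\sum_i d_i^{2}+2u\sup_i|d_i|\bigr)$; after the algebraic simplification that is characteristic of the Laurent–Massart trick this produces
\[
P\!\left(q_D-\E q_D\ge 2\sqrt{x\sum_i d_i^2}+2x\sup_i|d_i|\right)\le e^{-x}.
\]
Substituting $x=s^2/2$ and identifying $\sqrt{2\sum_i d_i^2}$ with $\|q_D\|_{L_2(\gamma_p)}$ (the $L_2(\gamma_p)$ norm of the centered element of the second Wiener chaos) delivers inequality \eqref{equ1}.

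For the lower tail, the same strategy is run with exponent $-t<0$. The Laplace transform becomes $e^{td_i}/\sqrt{1+2td_i}$, which has no singularity, and one uses the simpler elementary bound $\tfrac{y}{2}-\tfrac12\log(1+y)\le \tfrac{y^2}{4}$ valid for $y\ge 0$ (handling signs of the $d_i$ separately). This gives $\log\E[e^{-t(q_D-\E q_D)}]\le t^{2}\sum_i d_i^{2}$ for every $t>0$, without any quadratic correction. Chernoff optimization with $t=u/\bigl(2\sum_i d_i^{2}\bigr)$ produces $P(q_D-\E q_D\le -2\sqrt{x\sum_i d_i^2})\le e^{-x}$, which becomes \eqref{equ2} after $x=s^2/2$.

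I expect the main obstacle here to be bookkeeping rather than analysis: one must verify the exact normalization convention behind $\|q_D\|_{L_2(\gamma_p)}$ used in the paper so that the constant in front of $s$ matches $1/2$, and one must handle the sign of $d_i$ cleanly when applying the log estimate to both positive and negative coefficients simultaneously. Once the two elementary inequalities $-\tfrac12\log(1-y)-\tfrac{y}{2}\le \tfrac{y^2}{4(1-y)}$ and $\tfrac{y}{2}-\tfrac12\log(1+y)\le \tfrac{y^2}{4}$ are in place, the rest of the proof is an optimization that is entirely mechanical.
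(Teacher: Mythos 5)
The paper does not actually prove this statement: Theorem \ref{proplaurent} is quoted verbatim from Laurent and Massart (Lemma~1, p.~1325 of \cite{Laurent:2000fk}), and the proof of that lemma is exactly the Cram\'er--Chernoff computation you describe, so your route coincides with that of the cited source. Your upper-tail argument is sound in substance. The one point needing care is the one you flag: the inequality $-\tfrac12\log(1-y)-\tfrac y2\le\tfrac{y^2}{4(1-y)}$ holds only for $0\le y<1$ and reverses for $y<0$, so it cannot be applied termwise to a coordinate with $d_i<0$; but the repair works, since such a coordinate contributes $\tfrac v2-\tfrac12\log(1+v)\le\tfrac{v^2}{4}\le\tfrac{v^2}{4(1-2t\sup_j|d_j|)}$ with $v=2t|d_i|\ge0$, and the unified mgf bound, hence \eqref{equ1}, follows.

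The genuine gap is in the lower tail. Your assertion that $e^{td_i}/\sqrt{1+2td_i}$ ``has no singularity'' is true only when every $d_i\ge0$. If some $d_i<0$, then $-td_i(X_i^2-1)$ is a \emph{positive} multiple of $X_i^2-1$, whose Laplace transform blows up at $t=1/(2|d_i|)$; the claimed bound $\log\E[e^{-t(q_D-\E q_D)}]\le t^2\sum_i d_i^2$ for all $t>0$ is then false, and so is \eqref{equ2} itself as printed: for $p=1$ and $d_1=-1$ one has $q_D(X)-\E q_D(X)=1-\xi^2$, and $\gamma_1\bigl(\xi^2\ge 1+\tfrac s2\|q_D\|_{L_2(\gamma_1)}\bigr)$ decays only exponentially in $s$, which exceeds $e^{-s^2/2}$ for large $s$. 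The hypothesis $a_i\ge0$ present in the original Laurent--Massart lemma (or, alternatively, a $\sup_i|d_i|\,s^2$ correction added to the lower tail as well) must be restored; with that amendment your argument is complete, and the two-sided bound with the quadratic correction is all the paper's subsequent corollary actually requires. A second, more minor, discrepancy: the Chernoff optimization delivers the deviation term $2\sqrt{x\sum_i d_i^2}=s\sqrt{2\sum_i d_i^2}=s\,\|q_D-\E_{\gamma_p}[q_D]\|_{L_2(\gamma_p)}$ at $x=s^2/2$, not $\tfrac s2\|q_D\|_{L_2(\gamma_p)}$; under any standard reading of that norm these differ by roughly a factor of $2$, so what you prove is the standard (slightly weaker) form of the lemma rather than the inequality as printed --- harmless for the paper, since assumption $A_1$ only needs the existence of the constants $c_0,c_1$.
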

 As a consequence, assumption $A_1$ is satisfied with $h_{\delta}(s)=\frac{s}{2}\|q_D\|_{L_2(\gamma_{p})}+s^2\sup_{i}|d_i|)\leq \|q_D\|_{L_2(\gamma_{p})}(\frac{s}{2}+s^2)$.\\
The use we will make of these results is entirely contained in the following corollary.
 \begin{corollaire}
Let $\X$ be a separable Banach space, $\gamma$ a gaussian measure on $\X$ and $\delta\in E_2(\gamma)$. Then $\delta$ satisfies assumption $A1$ with $h_{\delta}(s)=\|\delta-\E_{\gamma}[\delta]\|_{L_2(\gamma)}(s+s^2)$.
 \end{corollaire}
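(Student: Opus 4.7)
The plan is to deduce this corollary directly from Theorem \ref{proplaurent} (Laurent--Massart) by reducing a general element of $E_2(\gamma)$ to the diagonal case handled there. Since $\delta\in E_2(\gamma)$, the isometry between $E_2(\gamma)$ and the space of symmetric Hilbert--Schmidt operators on $H(\gamma)$ associates to $\delta$ a symmetric Hilbert--Schmidt operator $D$ on $H(\gamma)$ (this is the gaussian chaos decomposition recalled just before the statement of Theorem \ref{thedeux}). The spectral theorem for compact self-adjoint operators provides an orthonormal basis $(e_i)_{i\geq 1}$ of $H(\gamma)$ of eigenvectors of $D$ with eigenvalues $(d_i)_{i\geq 1}\in\ell^2$, so that
\[
\delta(X)-\E_\gamma[\delta]\;=\;\sum_{i\geq 1}d_i(\xi_i^{2}-1)\qquad\gamma\text{-a.s.},
\]
where $(\xi_i)_{i\geq 1}$ is a sequence of i.i.d.\ standard gaussians (the convergence being in $L_2(\gamma)$ and almost surely on $\X$). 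A direct variance computation gives
\[
\|\delta-\E_\gamma[\delta]\|_{L_2(\gamma)}^{2}\;=\;2\sum_{i\geq 1}d_i^{2}\;=\;2\|D\|_{HS}^{2},
\]
and in particular $\sup_i|d_i|\leq \|D\|_{HS}=\tfrac{1}{\sqrt 2}\|\delta-\E_\gamma[\delta]\|_{L_2(\gamma)}$.

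Next I would apply Theorem \ref{proplaurent} to the finite-dimensional truncation $D_N=\mathrm{Diag}(d_1,\dots,d_N)$ and pass to the limit as $N\to\infty$. Both bounds of that theorem (upper tail (\ref{equ1}) and lower tail (\ref{equ2})) yield, after a union bound, a constant $c_1'>0$ and $c_0'>0$ such that for every $s>0$,
\[
\gamma\!\left(\bigl|\delta-\E_\gamma[\delta]\bigr|\;\geq\;c_0'\bigl(s\,\|\delta-\E_\gamma[\delta]\|_{L_2(\gamma)}+s^{2}\sup_i|d_i|\bigr)\right)\;\leq\;c_1' e^{-s^2/2}.
\]
Using the bound $\sup_i|d_i|\leq \tfrac{1}{\sqrt 2}\|\delta-\E_\gamma[\delta]\|_{L_2(\gamma)}$, the right-hand argument is at most $c_0\,\|\delta-\E_\gamma[\delta]\|_{L_2(\gamma)}(s+s^{2})=c_0\,h_\delta(s)$ for a suitable $c_0>0$, which is precisely assumption $A_1$.

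The only mildly delicate point is the passage to the limit when $\X$ is infinite dimensional: one must check that the truncated quadratic forms $\delta_N(x)=\sum_{i\leq N}d_i(\xi_i(x)^2-1)$ converge to $\delta-\E_\gamma[\delta]$ in probability (in fact in $L_2(\gamma)$) so that the deviation bound transfers to the limit via Fatou's lemma. This is immediate from $\sum_{i\geq 1}d_i^2<\infty$ and orthogonality of the chaos terms, so no real obstacle arises; everything else is a bookkeeping exercise with constants.
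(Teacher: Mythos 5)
For the corollary as literally stated ($\delta\in E_2(\gamma)$, i.e.\ a centred element of the second Wiener chaos), your argument is correct and follows the same route as the paper: associate to $\delta$ a symmetric Hilbert--Schmidt operator, diagonalise it, apply Theorem \ref{proplaurent} to the resulting diagonal quadratic form, bound $\sup_i|d_i|\leq\|D\|_{HS}=\tfrac{1}{\sqrt2}\|\delta-\E_\gamma[\delta]\|_{L_2(\gamma)}$, and pass to the limit from finite-dimensional truncations. You have in fact made explicit the diagonalisation and approximation steps that the paper compresses into ``it suffices to check the result for $\X=\R^p$ and to use a standard approximation argument.''

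There is, however, a mismatch you should be aware of between the statement and the way the corollary is proved and used in the paper. The paper's own proof does not take $\delta$ to be purely second-chaos: it decomposes a general measurable quadratic polynomial as $\delta=\delta_0+\delta_1+\delta_2$ with $\delta_0$ constant, $\delta_1\in\X^*_{\gamma}$ affine and $\delta_2\in E_2(\gamma)$, treats $\delta_2$ by Laurent--Massart as you do, treats $\delta_1$ by the Gaussian Lipschitz concentration inequality (Theorem \ref{thlipis}), and then merges the two tail bounds with Lemma \ref{lclaim1}, absorbing the resulting factor $\sqrt2$ into the constant $c_0$ of assumption $A_1$. This extra generality is exactly what is needed later: in the proof of Theorem \ref{thedeux} the perturbation $\delta=d_0+d(x)+q_D^{\gamma_{C,m}}(x)$ has a nontrivial affine part, and your argument, which invokes only Theorem \ref{proplaurent}, does not cover that case. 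If the corollary is to be applied as in Section \ref{sduytzr}, you must add the affine chaos component via Theorem \ref{thlipis} and combine it with your quadratic bound through Lemma \ref{lclaim1}; with that supplement your proof coincides with the paper's.
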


 \begin{proof}
  It suffices to check the result for $\X=\R^p$ and to use a standard approximation argument. Recall that
 in $L_2(\gamma)$, we have $\X_{2,\gamma}^*=\{cte\}\oplus \X_{\gamma}^*\oplus E_2(\gamma)$. Also, there exists a unique triplet $\delta_0=\E_{\gamma}[\delta]\in \{cte\}$, $\delta_1\in \X_{\gamma}^*$ and $\delta_2\in E_2(\gamma)$ such that $\delta=\delta_0+\delta_1+\delta_2$.  
From the preceding corollary, assumption $A_1$ is satisfied for perturbation $\delta_2$, measure $P=\gamma$ and $h_{\delta_2}(s)=\|\delta_2\|_{L_2(\gamma)}(s+s^2)$. Because $\delta_1\in \X_{\gamma}^*$, $\delta_1$ is affine. Also, by Theorem \ref{thlipis}, the assumption $A_1$ is satisfied for perturbation $\delta_1$ with $h_{\delta_1}(s)=s\|\delta_1\|_{L_2(\gamma)}$. We can then conclude using Lemma \ref{lclaim1} and the fact that
\begin{align*}
\|\delta_2\|_{L_2(\gamma)}(s+s^2)+s\|\delta_1\|_{L_2(\gamma)}&\leq (\|\delta_1\|_{L_2(\gamma)}+\|\delta_2\|_{L_2(\gamma)})(s+s^2)\\
&\leq  \sqrt{2}(s+s^2)\|\delta-\delta_0\|_{L_2(\gamma)}.
\end{align*} 
\end{proof}
We now have all elements to demonstrate Theorem \ref{thedeux}. 
\subsection{Proof of Theorem \ref{thedeux}}\label{sduytzr}
As announced, we shall apply Theorem \ref{fonda}. From Theorem \ref{th:formquadra} Assumption $A_2$ is satisfied with $\beta=1/3$ in the case $1$ of our Theorem and for $\beta=2/7$ in the case $2$ of our Theorem. In both cases the constant $c_2$ depends on $r$ only. In both cases, from the preceding corollary, assumption $A_2$ is satisfied with the function  $h_{\delta}(s)=(s+s^2)\|\delta-\delta_0\|_{L_2(\gamma)}$. Also, if we apply Lemma \ref{fonda}, for all $q\in ]0,1[$, there exists a constant $C(r,q)>0$ such  that 
\[\gamma( V_f\Delta V_{f+\delta})\leq C(r,q)\left (|\E_{\gamma}(\delta)|+\|\delta-\E[\delta]\|_{L_2(\gamma)}\right )^{q\beta},\]
and a constant $C'(r,q)>0$ such that 
\[\gamma( V_f\Delta V_{f+\delta})\leq C'(r,q)\|\delta\|_{L_2(\gamma)}^{q\beta},\]
This ends the proof of the Theorem.

\subsection{Small crown probability}\label{pourhypA2}
In this subsection $\X_{2}^*$ is the set of real random variables that can be written $c+\sum_{i\geq 1}\beta_i (\xi_i^2-1)+\alpha_i\xi_i$ with  $c\in \R$, $\beta=(\beta_i)_i\in l_2(\N)$, $\alpha=(\alpha_i)_i\in l^2(N)$ $(\xi_i)_{i\in \N}$ is a sequence of independent identically distributed gaussian random variables with mean $0$ and variance $1$.
Let $q\in \X_{2}^*$ given by 
\[q=c+\sum_{i\geq 0}\alpha_i\xi_i+\sum_{i}\beta_i(\xi_i^2-1).\]
we will note
\begin{equation}\label{deftruc}
n_1(q)=\max_i |\alpha_i|\;\; \; n_2(q)=\max_i |\beta_i|, \;\; \sigma(q)=\left (\sum_{i\geq 0}2\beta_i^2+\alpha_i^2\right )^{1/2}.
\end{equation}

\begin{theoreme}\label{th:formquadra}

\begin{enumerate}
\item There exists $C(c_0)>0$ such that 
\[\sup \left \{P(|q|\leq \epsilon) \;:\; q\in \X_{2}^* \;:\;|\E[q]|\geq c_0\;\right \}\leq C(c_0)\epsilon^{2/7}.\]
\item There exists $C'(c_0)>0$ such that
\[\sup \left \{P(|q|\leq \epsilon) \;:\; q\in \X_{2}^* \;:\;E[q^2]\geq c_0\;\right \}\leq C'(c_0)\epsilon^{1/3}.\]
\item Let $q\in \X_{2}^*$,  for all $\epsilon\geq 0$,  
\[ P(|q|\leq \epsilon) \leq \sqrt{\frac{1}{\pi}\frac{\epsilon}{n_2(q)}}.\]
\end{enumerate}
\end{theoreme}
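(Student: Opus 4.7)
The plan is to prove item~3 directly by a one-variable conditioning argument, and then to use it as one ingredient in a case split that will yield items~1 and~2. The underlying intuition is that a Gaussian chaos of order two cannot have a concentration singularity at $0$ unless each of its ``nontrivial'' pieces---the mean $c$, the linear variance $\sum\alpha_i^2$, and the largest quadratic coefficient $n_2(q)$---is small, and each of these pieces obstructs small-ball behaviour through a different mechanism.

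For item~3 I would fix an index $i_0$ achieving $|\beta_{i_0}|=n_2(q)$ (the statement is trivial when $n_2(q)=0$) and condition on the $\sigma$-field $\mathcal{F}=\sigma(\xi_i:i\neq i_0)$. Conditionally, $q$ is a quadratic polynomial in the single variable $\xi_{i_0}$ of the form $\beta_{i_0}x^2+bx+d$ with $b$ and $d$ $\mathcal{F}$-measurable. Completing the square shows that for any real $a\neq 0$, $b$, $d$ the set $\{x\in\R:|ax^2+bx+d|\leq\epsilon\}$ is a union of at most two intervals of total Lebesgue measure $\mathcal{O}(\sqrt{\epsilon/|a|})$. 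Integrating the standard Gaussian density, bounded by $1/\sqrt{2\pi}$, over this set and then taking expectation with respect to $\mathcal{F}$ produces the claimed bound, the numerical constant $1/\sqrt{\pi}$ emerging from a sharpening of the elementary measure estimate.

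For items~1 and~2 the plan is to introduce a threshold $\tau=\tau(\epsilon)$ and split on whether $n_2(q)\geq\tau$. In the regime $n_2(q)\geq\tau$, item~3 immediately gives $P(|q|\leq\epsilon)\leq\sqrt{\epsilon/(\pi\tau)}$. In the complementary regime $n_2(q)<\tau$, Theorem~\ref{proplaurent} furnishes a Bernstein-type deviation bound for $Q=\sum\beta_i(\xi_i^2-1)$ in terms of $\|Q\|_{L_2}$ and of $n_2(q)\leq\tau$; for a second parameter $s$ one then uses the trivial inclusion
\[
\{|q|\leq\epsilon\}\subseteq\{|Q|\geq s\}\cup\{|c+L|\leq\epsilon+s\},
\]
and observes that since the second event on the right depends only on the marginal law of $L=\sum\alpha_i\xi_i$, the dependence between $L$ and $Q$ is harmless. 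For item~2 one then applies the standard Gaussian anti-concentration $P(|c+L|\leq\eta)\leq 2\eta/\sqrt{2\pi\sum\alpha_i^2}$ whenever $\sum\alpha_i^2$ is bounded below; for item~1 the hypothesis $|c|\geq c_0$ makes the event $\{|c+L|\leq\epsilon+s\}$ improbable as soon as $\epsilon+s<c_0/2$, again by Gaussian concentration of $L$ around $0$.

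The main obstacle is balancing $\tau$ and $s$ as functions of $\epsilon$ to produce the sharp exponents, and in particular handling the awkward regime where $n_2(q)$, $\sum\alpha_i^2$ and $|c|$ are \emph{all} small except in total $L_2$ mass, which forces the pure-quadratic variance $\sum\beta_i^2$ to be large while $\max_i|\beta_i|$ remains small. In this configuration one has to exploit that a sum of many small independent centred chi-square fluctuations is itself anti-concentrated (essentially Gaussian with variance $2\sum\beta_i^2$), and it is precisely this chain of trade-offs between three anti-concentration mechanisms---quadratic in one variable, linear in one variable, and central-limit-type for a high-dimensional quadratic with small coefficients---that forces exponents strictly below the naive $1/2$, namely the specific values $1/3$ and $2/7$ after optimisation.
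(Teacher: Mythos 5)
Your item~3 argument is exactly the paper's: fix $i_0$ with $|\beta_{i_0}|=n_2(q)$, condition on the remaining coordinates, complete the square in $\xi_{i_0}$, and bound the Gaussian measure of the resulting interval(s) of length $O(\sqrt{\epsilon/n_2(q)})$. That part is correct and needs no change.

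For items~1 and~2 there is a genuine gap, and it sits precisely in the regime you defer to the last paragraph. The union bound $\{|q|\leq\epsilon\}\subseteq\{|Q|\geq s\}\cup\{|c+L|\leq\epsilon+s\}$ cannot give a nontrivial estimate when $\sum_i\beta_i^2$ is of order one while $n_2(q)=\max_i|\beta_i|$ is small. Concretely, take $q=c_0+\sum_{i=1}^N N^{-1/2}(\xi_i^2-1)$ with $N$ large and $c_0<1$: then $n_2(q)=N^{-1/2}$ falls below any threshold $\tau(\epsilon)$, $L\equiv 0$ so the event $\{|c+L|\leq\epsilon+s\}$ is deterministic and forces $s<c_0-\epsilon$, and $P(|Q|\geq s)$ with $\|Q\|_{L_2}=\sqrt2$ and $s<c_0$ stays bounded away from $0$ uniformly in $\epsilon$. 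No choice of $s$ and $\tau$ rescues the bound; the deviation inequality of Theorem~\ref{proplaurent} is a concentration statement and only helps when $s\gtrsim\|Q\|_{L_2}$. What is needed in this regime is the anti-concentration of the quadratic chaos itself, i.e.\ a quantitative CLT asserting that $q/\sigma(q)$ is within $O(n_2(q)/\sigma(q))$ of a Gaussian in Kolmogorov distance. You name this idea in one sentence, but it is the technical heart of the theorem: it is the paper's Lemma~\ref{lemme:teplo}, proved by computing the characteristic function of $\alpha_j\xi_j+\beta_j(\xi_j^2-1)$ in closed form and applying Esseen's smoothing inequality, and it must be stated for the full sum $c+L+Q$ rather than for $Q$ alone, since $L$ and $Q$ are built from the same $\xi_i$ and are not independent (your remark that "the dependence is harmless" applies to the union bound, not to the limit theorem you would need here). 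Without this lemma your case split establishes items~1 and~2 only when $\sum_i\beta_i^2$ is small.

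Two further points, both minor by comparison. First, the paper also uses a Chebyshev step, $P(|q|\leq\epsilon)\leq\sigma^2(q)/(|\E[q]|-\epsilon)^2$, as the third regime of item~1 (when $\sigma(q)\leq\epsilon^{1/7}$); your Gaussian tail bound for $L$ combined with Theorem~\ref{proplaurent} for $Q$ does cover this regime, so that omission is harmless. Second, the exponents $2/7$ and $1/3$ are obtained by balancing the three bounds $\sigma^2(q)$, $\sqrt{\epsilon/n_2(q)}$ and $n_2(q)/\sigma(q)$ at the thresholds $\sigma(q)\sim\epsilon^{1/7}$, $n_2(q)\sim\epsilon^{3/7}$ for item~1 and $n_2(q)\sim\epsilon^{1/3}$ for item~2; you anticipate this optimisation correctly in qualitative terms, but it can only be carried out once the CLT ingredient is actually in place.
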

\begin{remarque}
This result may seem surprising, and we did not show it is optimal. If $n_2(q)=\max_{i}|\beta_i|>c_0$, the bound of point $3$ is optimal in the sense that if $\beta=(1,0,\dots)$, $c=1$ and $\alpha=0$ we get $P(|q|\leq \epsilon)=P(|\xi^2|\leq \epsilon)\sim C\epsilon^{1/2}$ (for a constant $C$ which can be calculated explicitly). In addition, when $\|\beta\|_{l^2}\rightarrow 0$ the behaviour of $P(|q|\leq \epsilon)$ tends to be the same as $P(|\|\alpha\|_{l^2} \mathcal{N}(0,1)-c|\leq \epsilon)\sim C'(c_0)\epsilon$. Also, it may be conjectured that points $1$ and $2$ of the Theorem can be improved (in order to obtain an exponent $1/2$ instead of $2/7$ and $1/3$) but we believe this is unlikely. The difficult cases to study (and point $3$ of the following proof demonstrate this) are those with $\|\beta\|_{\infty}\rightarrow 0$ but $\|\beta\|_{l^2}$ does not tend to zero. 
\end{remarque}
\begin{proof}

We shall proceed in four steps.\\
\indent \textit{Step 1}. We claim that if $|\E[q]|>\epsilon$ then 
\begin{equation}\label{eq:etp1}
P(|q|\leq \epsilon)\leq \frac{\sigma^2(q)}{(|\E[q]|-\epsilon)^2}.
\end{equation}

Notice that $|q-\E[q]|\geq ||q|-|\E[q]||$ and if $|q|<\epsilon<|\E[q]|$ then $||q|-|\E[q]||=|\E[q]|-|q|$ and
\[|q|\geq |\E[q]|-|q-\E[q]|.\]
Also
\[
P(|q|\leq \epsilon)\leq P(|\E[q]|-|q-\E[q]|\leq \epsilon) = P(1\leq \frac{|q-\E[q]|}{|\E[q]|-\epsilon})
\]
which implies (\ref{eq:etp1}) by the Markov inequality.

\indent \textit{Step 2}. We will assume without loss of generality that for all $i\in \N$ $\alpha_i\geq 0$. This is what we will do. In the following, $\alpha_{i_0}=\max_i\alpha_i$, $j_0\in arg\max|\beta_{j}|$ and $\sign(x)$  is the function that returns the sign of the real $x$. We claim that 
\begin{equation}\label{equ:etap11}
P(|q|\leq \epsilon)\leq \sqrt{\frac{1}{\pi}\frac{\epsilon}{n_2(q)}}.
\end{equation}

Let 
\[Z=\sum_{i\neq j_0}\alpha_{i}\xi_{i}+\beta_{i}(\xi_{i}^2-1). \]
To obtain the desired inequality, note that for all $\alpha_{j_0}\geq 0$, $\beta_{j_0}\neq 0$
\begin{align*}
P&\left (|Z+\alpha_{j_0}\xi+\beta_{j_0}(\xi^2-1)|\leq \epsilon\right )=P\left (|\sign(\beta_{j_0})Z+\alpha_{j_0}\xi+|\beta_{j_0}|(\xi^2-1)|\leq \epsilon\right )\\
&=P\left (|\frac{\sign(\beta_{j_0})Z}{|\beta_{j_0}|}+(\xi+\frac{\alpha_{j_0}}{2|\beta_{j_0}|})^2-1-\frac{\alpha_{j_0}^2}{4\beta_{j_0}^2})|\leq \frac{\epsilon}{|\beta_{j_0}|}\right )\\
&=P\left (\xi\in\left [f_{\alpha_{j_0},\beta_{j_0}}(-\epsilon)-\frac{\alpha_{j_0}}{2|\beta_{j_0}|};f_{\alpha_{j_0},\beta_{j_0}}(\epsilon)-\frac{\alpha_{j_0}}{2|\beta_{j_0}|}\right ]\right ).
\end{align*} 
where
\[f_{\alpha,\beta}(\epsilon)=\sqrt{(1+\frac{\alpha^2}{4\beta^2}-\frac{\sign(\beta)Z-\epsilon}{|\beta|})_+},\]
and $(x)_+=x\1_{x\geq 0}$.
The inequality (\ref{equ:etap11}) results from the choice $\alpha=\alpha_{j_0}$ and $\beta=\beta_{j_0}$

and from the fact that if $u\in \R$, $\sqrt{(u+\frac{\epsilon}{|\beta_{j_0}|})_+}-\sqrt{(u-\frac{\epsilon}{|\beta_{j_0}|})_+}\leq \sqrt{\frac{2\epsilon}{n_2(q)}}$. \\

 \textit{Step 3} We claim that  
 \begin{equation}\label{froyutr}
 P(|q|\leq \epsilon)\leq 208 \frac{ n_2(q)}{\sigma(q)}+ \frac{2\epsilon}{\sigma(q)}e^{-\frac{(|\E[q]|-\epsilon)^2}{\sigma^2(q)}}.
 \end{equation}
 We prove the following lemma (which is a central limit theorem) at the end of the proof. 
\begin{Lemme}\label{lemme:teplo}
 Let $X_i=\beta_i(\xi_i^2-1)+\alpha_i\xi_i$, $\xi$ be a gaussian centered random variable with variance $1$ and $\sigma(q)$ given by (\ref{deftruc}). We obtain:
  \[
 \sup_{\epsilon \geq 0} \left |P \left (|\E_{\gamma}[q]+\sum_{i\geq 0}X_i|\leq \epsilon \right )-P\left ( |\xi+\frac{\E_{\gamma}[q]}{\sigma(q)}| \leq \frac{\epsilon}{\sigma(q)}\right )\right | \leq 104\frac{\max(|\beta_i|)}{\sigma(q)}.
 \]
 \end{Lemme}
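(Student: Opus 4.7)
This lemma is a quantitative central limit theorem for $S = \sum_{i\geq 0} X_i$, with independent summands $X_i = \beta_i(\xi_i^2-1) + \alpha_i\xi_i$ having mean zero and variance $\sigma_i^2 = 2\beta_i^2 + \alpha_i^2$. A direct Berry--Esseen estimate would produce an error of order $(\max_i|\alpha_i| + \max_i|\beta_i|)/\sigma(q)$; the refinement the lemma calls for is that only $n_2(q) = \max_i|\beta_i|$ appears, reflecting the fact that the linear summands $\alpha_i\xi_i$ are already Gaussian and contribute nothing to the non-Gaussianity of $S$. My plan is to reduce to a Kolmogorov-distance estimate for $S/\sigma(q)$ and then control this via Esseen's smoothing inequality applied to a direct Taylor expansion of $\phi_{X_i}(t)$.

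\textit{Step 1 (reduction).} The events $|\E_\gamma[q]+S|\leq\epsilon$ and $|\sigma(q)\xi + \E_\gamma[q]|\leq\epsilon$ both describe membership in the interval $[-\E_\gamma[q]-\epsilon,\, -\E_\gamma[q]+\epsilon]$, so the left-hand side of the lemma is bounded by twice the Kolmogorov distance between $S$ and $\sigma(q)\xi$, and it suffices to show
$$\sup_x|P(S \leq x) - \Phi(x/\sigma(q))| \leq 52\, n_2(q)/\sigma(q).$$
One may restrict to the non-trivial regime $n_2(q) \leq \sigma(q)/104$, since otherwise the claimed bound exceeds $1$ trivially.

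\textit{Step 2 (characteristic function expansion).} A direct Gaussian integration yields
$$\phi_{X_i}(t) = e^{-it\beta_i}(1 - 2it\beta_i)^{-1/2}\exp\Bigl(-\frac{\alpha_i^2 t^2}{2(1-2it\beta_i)}\Bigr),$$
and Taylor-expanding $\log\phi_{X_i}(t)$, valid for $|t|\leq 1/(4n_2(q))$, gives
$$\log\phi_{X_i}(t) + \tfrac{1}{2}\sigma_i^2 t^2 = -it^3\beta_i\bigl(\tfrac{4}{3}\beta_i^2 + \alpha_i^2\bigr) + O\bigl(t^4 \beta_i^2 (\beta_i^2 + \alpha_i^2)\bigr).$$
The decisive observation is that $\beta_i$ factors explicitly out of the cubic correction, so that no pure $\alpha_i^3$ term survives. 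Summing over $i$ and using $\bigl|\sum_i \beta_i(\tfrac{4}{3}\beta_i^2 + \alpha_i^2)\bigr| \leq n_2(q)\sigma(q)^2$ (which combines $|\beta_i|\leq n_2(q)$ with $\tfrac{4}{3}\sum\beta_i^2 + \sum\alpha_i^2 \leq \sigma(q)^2$) then yields $\bigl|\log\phi_S(t) + \tfrac{1}{2}\sigma(q)^2 t^2\bigr| \leq C|t|^3 n_2(q)\sigma(q)^2$ on the whole range $|t|\leq 1/(4n_2(q))$.

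\textit{Step 3 (Esseen smoothing and main obstacle).} Writing $\phi_S(t) - e^{-t^2\sigma(q)^2/2} = e^{-t^2\sigma(q)^2/2}(e^{R(t)}-1)$ with $R$ the remainder above, and using $|e^R-1|\leq |R|e^{|R|}$, the integrand $|\phi_S(t) - e^{-t^2\sigma(q)^2/2}|/|t|$ is bounded by $Ct^2 n_2(q)\sigma(q)^2 e^{-t^2\sigma(q)^2/4}$ on the relevant range. Esseen's smoothing inequality with $T\asymp 1/\sigma(q)$ (which lies well within $1/(4n_2(q))$ thanks to the reduction in Step 1) then integrates this to a term of order $n_2(q)/\sigma(q)$, while the boundary term $C/(T\sigma(q))$ is of the same order. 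The delicate part, and the main obstacle, is pinning down the explicit constant $104$: this requires a sharp form of Esseen's inequality, careful control of the cubic Taylor remainder of $\log\phi_{X_i}$ (absorbing both the $O(t^4)$ correction and the factor $e^{|R|}$), and a clean split between the regime where the Taylor expansion is valid and its vacuous complement. All other aspects of the argument are standard but arithmetically heavy.
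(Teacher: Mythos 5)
Your overall route is the same as the paper's: compute $\phi_{X_i}$ in closed form by Gaussian integration, Taylor--expand $\log\phi_{X_i}$ to isolate the cubic correction (in which $\beta_i$ indeed factors out, so only $n_2(q)$ survives; your coefficient $-it^3\beta_i(\tfrac{4}{3}\beta_i^2+\alpha_i^2)$ and the bound $n_2(q)\sigma(q)^2$ on its sum match the paper's estimate $|z|\le 2\sigma^2|u|^3\max_j|\beta_j|$), and feed the resulting control of $|\phi_S(t)-e^{-\sigma^2t^2/2}|$ into Esseen's smoothing inequality. Steps 1 and 2 are sound.

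The error is in Step 3, in the choice of the smoothing cutoff. With $T\asymp 1/\sigma(q)$ the boundary term of Esseen's inequality is $\tfrac{24\sup G'}{\pi T}\asymp \tfrac{1}{T\sigma(q)}\asymp 1$, an absolute constant, not a quantity of order $n_2(q)/\sigma(q)$ as you assert; substituting $T=1/\sigma$ into your own expression $C/(T\sigma(q))$ gives $C$, so the final bound is $O(1)+O(n_2/\sigma)$ and proves nothing. The constraint on $T$ is not that it fit inside the domain of validity of the Taylor expansion (any $T\le 1/(4n_2)$ does), but that it be \emph{large} enough to kill the boundary term: you need $T\gtrsim 1/n_2(q)$, which means pushing the cutoff all the way to the edge of the expansion's validity --- there is no slack, and the reduction to $n_2\le\sigma/104$ in Step 1 is irrelevant to this point. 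This is exactly what the paper does, taking $a=\sigma/(6\max_j|\beta_j|)$ in the normalized frequency variable (equivalently $T=1/(6n_2)$ unnormalized), which makes the boundary term $\tfrac{24}{a\sqrt{2\pi}}=72\sqrt{2/\pi}\,n_2(q)/\sigma(q)$, of the correct order; the integral term $\int_{\R}Ct^2n_2\sigma^2e^{-t^2\sigma^2/4}\,dt=O(n_2/\sigma)$ is then as you computed, and the two pieces together give the stated constant. With that single correction your argument coincides with the paper's.
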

 
Also, because $|\E[q]|>\epsilon$
\[P\left (|\xi+\frac{\E[q]}{\sigma(q)}|\leq \frac{\epsilon}{\sigma(q)}\right )\leq \frac{2\epsilon}{\sigma(q)}e^{-\frac{(|\E[q]|-\epsilon)^2}{\sigma^2(q)}}, \]
we have inequality (\ref{froyutr}).\\
\indent \textit{Step 4}.
As announced we will distinguish several disjoint cases to demonstrate points $1$ and $2$ of the theorem. We begin with point $1$. \begin{enumerate}
\item In the case where $\sigma(q)<\epsilon^{1/7}$, it is the inequality from step $1$ (\ref{eq:etp1}) that leads to the desired conclusion.
\item In the case where $n_2(q)\geq \epsilon^{3/7}$, it is the inequality from step $2$ (\ref{equ:etap11}) that leads to the desired conclusion.
\item In the case where $n_2(q)<\epsilon^{3/7}$ and $\sigma(q)>\epsilon^{1/7}$, it is the inequality from step $3$ (\ref{froyutr}) that leads to the desired conclusion.
\end{enumerate}

We conclude with point $2$. 
\begin{enumerate}
\item In the case where $n_2(q)\geq \epsilon^{1/3}$, it is the inequality from step $2$ (\ref{equ:etap11}) that leads to the desired conclusion.
\item In the case where $n_2(q)<\epsilon^{1/3}$ it is the inequality from step $3$ (\ref{froyutr}) that leads to the desired conclusion.
\end{enumerate}
\end{proof}
We now give the proof of theorem \ref{lemme:teplo}. 
\begin{proof}
This proof is decomposed into two steps. In the first step, we calculate 
\begin{equation}\label{etapodi}
\forall \alpha,\beta\in \R,\;\; \phi_{\alpha,\beta}(t)=\E\left [e^{it(\xi\alpha+\beta(\xi^2-1))}\right ],
\end{equation}
and in the second one we deduce that for all $|t|<\frac{\sigma}{6\max_j|\beta_j|}=a$ 
\begin{equation}\label{etoiru}
|\prod_{j\geq 0}\phi_{\alpha_j,\beta_j}(t/\sigma)-e^{-t^2/2}|\leq \frac{4\max_j|\beta_j|}{\sigma}\frac{|t|^{3}}{2}e^{-t^2/6},
\end{equation}
which implies the desired result from the Essen inequality (see for example \cite{Shorack:2000yp} p358)
\begin{align*}
\sup_{u\in \R}&\left |P\left (\frac{1}{\sigma}\sum_{j\geq 0}\alpha_j\xi_j+\beta_j(\xi_j^2-1)\geq u\right )-\Phi(u)\right |\\
& \leq  \int_{-a}^{a}\left |\frac{\prod_{i\geq 0}\phi_{\alpha,\beta}(t/\sigma)-e^{-t^2/2}}{t}\right |dt+\frac{24}{a\sqrt{2\pi}}\\
&\leq \frac{4\max_j|\beta_j|}{\sigma}\int_{\R}\frac{t^2}{2}e^{-\frac{t^2}{6}}dt+\frac{\max_j|\beta_j|72\sqrt{2}}{\sigma\sqrt{\pi}}\\
&=\frac{\max_j|\beta_j|}{\sigma}\left (72\sqrt{\frac{2}{\pi}}+32\right )\leq 104\frac{\max_j|\beta_j|}{\sigma},
\end{align*}
where $\Phi$ is the cumulative distribution function of a standardised gaussian real random variable.
 \\
\textit{Step 1.} 
Let $\Omega_{\beta}=\{z\in \mathbb{C}\;\; 2\Im(z)\beta>-1\}$ and  $\psi_{\alpha,\beta}(z)$ be given by
\[\forall \alpha,\beta\in \R,\;\; z\in\omega_{\beta}\;\; \psi_{\alpha,\beta}(z)=\frac{e^{-\beta i z}}{(1-2\beta i z)^{1/2}}e^{-1/2\frac{\alpha^2z^2}{(1-2\beta i z)}} .\]
The function $\psi_{\alpha,\beta}$ is analytic on $\Omega_{\beta}$. The function $\phi_{\alpha,\beta}(t)$ defined by (\ref{etapodi}) can be continued into an analytic function on the domain  $\Omega_{\beta}$ and because

\[\frac{x^2}{2}+y(\alpha x+\beta (x^2-1))=\frac{1}{2}(1+2\beta y)(x+\frac{\alpha y}{1+2\beta y})^2-\frac{\alpha^2 y^2}{2(1+2\beta y)}\]
we observe that
\[\forall y>-\frac{1}{2\beta}\;\;\; \psi_{\alpha,\beta}(iy)=\phi_{\alpha,\beta}(iy).\]
Also, we can deduce that $\phi_{\alpha,\beta}(z)$ and $\psi_{\alpha,\beta}(z)$ are equal on $\Omega_{\beta}$ and in particular on $\R$ which gives
\[
\forall \alpha,\beta\in \R,\;\; t\in \R\;\; \phi_{\alpha,\beta}(t)=\frac{e^{-\beta i t}}{(1-2\beta i t)^{1/2}}e^{-1/2\frac{\alpha^2t^2}{(1-2\beta i t)}}.
\]
\textit{Step 2.} Proof of (\ref{etoiru}).
The preceding equation gives
\[|\prod_{i\geq 0}\phi_{\alpha,\beta}(t/\sigma)-e^{-t^2/2}|=e^{-\frac{t^2}{2}}|e^{z}-1|\leq e^{-\frac{t^2}{2}} |z|e^{z},\]
where 
\[u=\frac{t}{\sigma}\;\text{ et } \;z=\frac{t^{2}}{2}+\sum_{j\geq 0}\left \{-1/2\frac{\alpha_j^2u^2}{(1-2\beta_j i u)}+\frac{1}{2}(-2\beta_jui-\log(1-2\beta_jui))\right \},\]
and hence 
\begin{equation}\label{toutcru}
z=\sum_{j\geq 0}\left \{\left (\frac{u^{2}\alpha_j^2}{2}-\frac{1}{2}\frac{\alpha_j^2u^2}{(1-2\beta_j i u)}\right ) +\left (\frac{u^{2}2\beta_j^2}{2}-\frac{1}{2}(2\beta_jui+\log(1-2\beta_jui))\right )\right \}.
\end{equation}
In addition, if $|t|<\frac{\sigma}{6\max_i|\beta_i|}$, then for all  $j\in \N$  $|2u\beta_j|<\frac{1}{3}$  and we have (cf Taylor expansion  (1) p352 in \cite{Shorack:2000yp} )
\[|\log(1-2\beta_jui)+2\beta_jui-\frac{4\beta_j^2u^{2}}{2}|\leq \frac{8|u\beta_j|^3}{3}\left |\frac{1}{1-|2u\beta_j|}\right |\leq 4|u\beta_j|^2\max_{j}|\beta_j|. \]
We also have
 \[|\frac{u^{2}\alpha_j^2}{2}-\frac{1}{2}\frac{\alpha_j^2u^2}{(1-2\beta_j i u)}|\leq \frac{1}{2}\alpha_j^2|u|^ 3 \frac{2|\beta_j|}{1+4\beta_j^2 u^2}\leq \alpha_j^2|u|^3\max_{j}|\beta_j|.\]
As a consequence, if $|t|<\frac{\sigma}{6\max_i|\beta_i|}$, then (\ref{toutcru}) implies:
 \[|z|\leq 2\sigma^2|u|^3\max_{j}|\beta_j|=\frac{2\max_{j}|\beta_j|}{\sigma}|t|^3, \]
 and
 \[e^{-\left( \frac{t^2}{2}-|z|\right )}\leq e^{-\frac{t^2}{2}(1-\frac{2}{3})}=e^{-\frac{t^2}{6}}.\]
\end{proof}

\section*{Acknowledgements}
This work has been done with support from La Region Rhones-Alpes. 

\bibliography{../../../biblio/biblio}
\bibliographystyle{plain}
\end{document}